\DeclareMathOperator{\Res}{Res}
\theoremstyle{plain} 
\newtheorem{theorem}{Theorem}[section]
\newtheorem{lemma}[theorem]{Lemma}
\newtheorem{proposition}[theorem]{Proposition}
\theoremstyle{definition} 
\newtheorem{definition}[theorem]{Definition}
\newtheorem{remark}[theorem]{Remark}
\newcommand{\R}{\mathbb{R}}
\newcommand{\C}{\mathbb{C}}
\newcommand{\N}{\mathbb{N}}
\newcommand{\re}{\operatorname{Re}}
\newcommand{\im}{\operatorname{Im}}
\newcommand*\conj[1]{\overline{#1}}
\begin{document}
		
		\begin{title}
			{Dihedralization of Minimal Surfaces in $\mathbb{R}^3$}
		\end{title}
		
		\author{Ramazan Yol}
		
		\address{Ramazan Yol\\Department of Mathematics\\Indiana University\\
			Bloomington, IN 47405
			\\USA}
		
		\date{}
	
		\begin{abstract}
	It is a well known phenomenon that many classical minimal surfaces in Euclidean space also exist with “higher dihedral symmetry”.
More precisely, these surfaces are solutions to free boundary problems in a wedge bounded by two vertical planes with varying angle.
We study the limit of such surfaces when the angle converges to 0. In many cases, these limits are simpler than the original surface, and can be used in conjunction with the implicit function theorem to give new existence proofs of the original surfaces with small dihedral angle.
This approach has led to the discovery of new minimal surfaces as well.

		\end{abstract}
		
		\maketitle	
\section{Introduction}\label{sec:introduction}
It is well-known that minimal surfaces with arbitrary genus and high rotational symmetry group exist. Famous  examples, such as the Scherk surface and the Costa surface, often have ``companion" surfaces with $n$-fold rotational symmetry. In order to study a minimal surface $X$ with $n$-th order rotational symmetry group $\langle r\rangle$, it suffices to study a fundamental piece $W$ of $X/\langle r\rangle$. In full generality, $W$ can be seen as a minimal surface in a ``twisted wedge" which is omitted here for the sake of simplicity. In this paper we will only consider the surfaces with high dihedral symmetries. Consequently, $W$ can be seen as a solution to a free boundary problem in a wedge bounded by two vertical planes that meet at an angle $\frac{2\pi}{n}$. Visa-versa a given minimal surface $W$, whose boundary lies on two symmetry planes meeting at an angle  $\frac{2\pi}{n}$ (thus meets the bounding planes orthogonally), can be extended by reflections to a complete minimal surface in $\R^3$. We can also generalize the free boundary problems by allowing for any non-negative real wedge angle $2\pi\alpha$ where $\alpha\geq0$. This generalization allow us to consider a continuous family of solutions. Therefore, one can investigate minimal surfaces with high rotational symmetry of any order by studying free boundary problems in a wedge bounded by two vertical planes with a varying angle.

This perspective raises many questions: Is it possible to solve the free boundary problem for every  $2\pi\alpha$? If that is not the case, for what upper and lower bounds of  $\alpha$, can we solve it? What would the solutions look like for the limiting case $\alpha=0$? Clearly, our understanding of the moduli spaces of rotationally symmetric minimal surfaces would be enhanced if we had the answers to these questions.

For the purpose of understanding moduli spaces of minimal surfaces, a key method, frequently used in the past, has been to work on limits of families of minimal surfaces. For instance, Traizet showed in a series of articles  \cite{tr1,tr2a,tr2} that new examples of minimal surfaces in Euclidean space can be regenerated by utilizing noded Riemann surface limits. In this paper, inspired by regeneration methods, we regenerate solutions to certain free boundary problems by utilizing the limiting solution as $\alpha$ goes to $0$. Note that, for  $\alpha=0$, the bounding planes of the wedges become parallel, and with appropriate scaling, we obtain a free boundary problem in a slab bounded by two parallel, vertical planes. As $\alpha$ approaches $0$, there is of course a change in the edges of wedges since the wedges in the free boundary problem transform into a slab in the limit case. Fig \ref{fig:ScherkTower} illustrates the phases of this phenomenon; note, in particular, the dramatic change of the edges as we transition from Fig \ref{fig:ScherkTower}(b) to (c). 
\begin{figure}[h]
	\begin{center}
		\subfigure[Karcher’s Scherk Towers]{\includegraphics[width=1.5in]{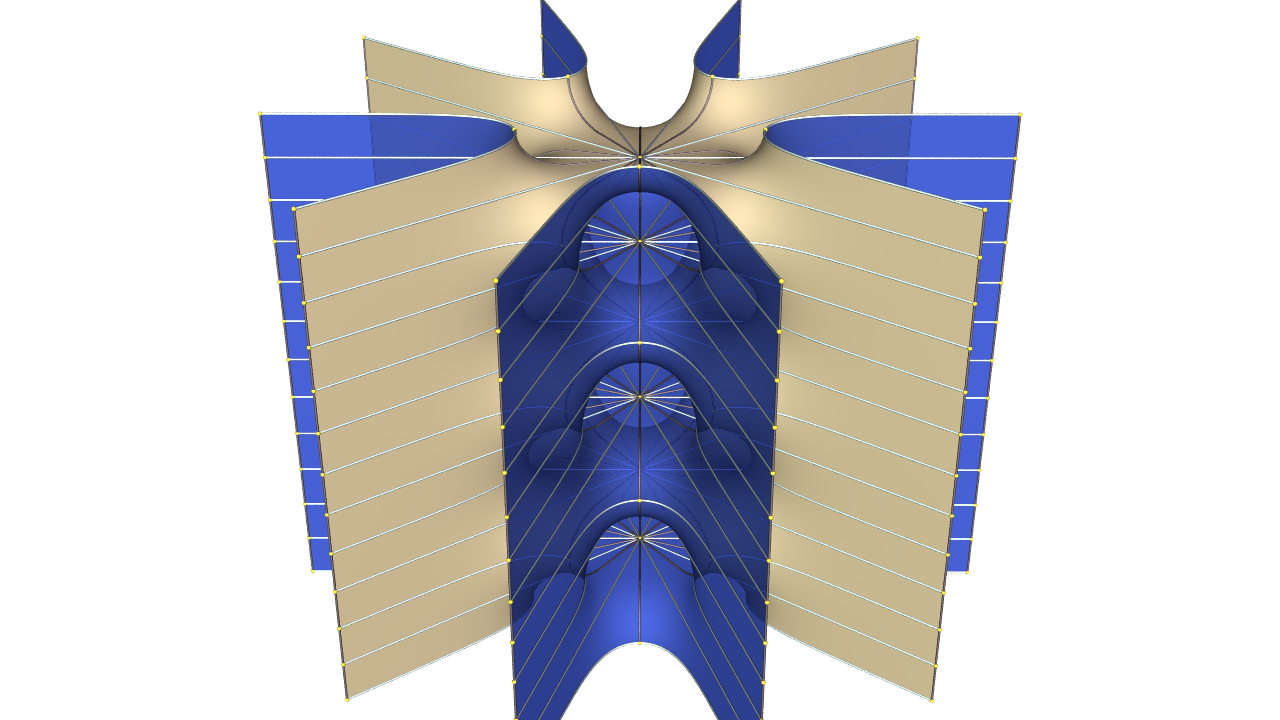}}
		\subfigure[A Minimal Wedge of Scherk Tower]{\includegraphics[width=1.5in]{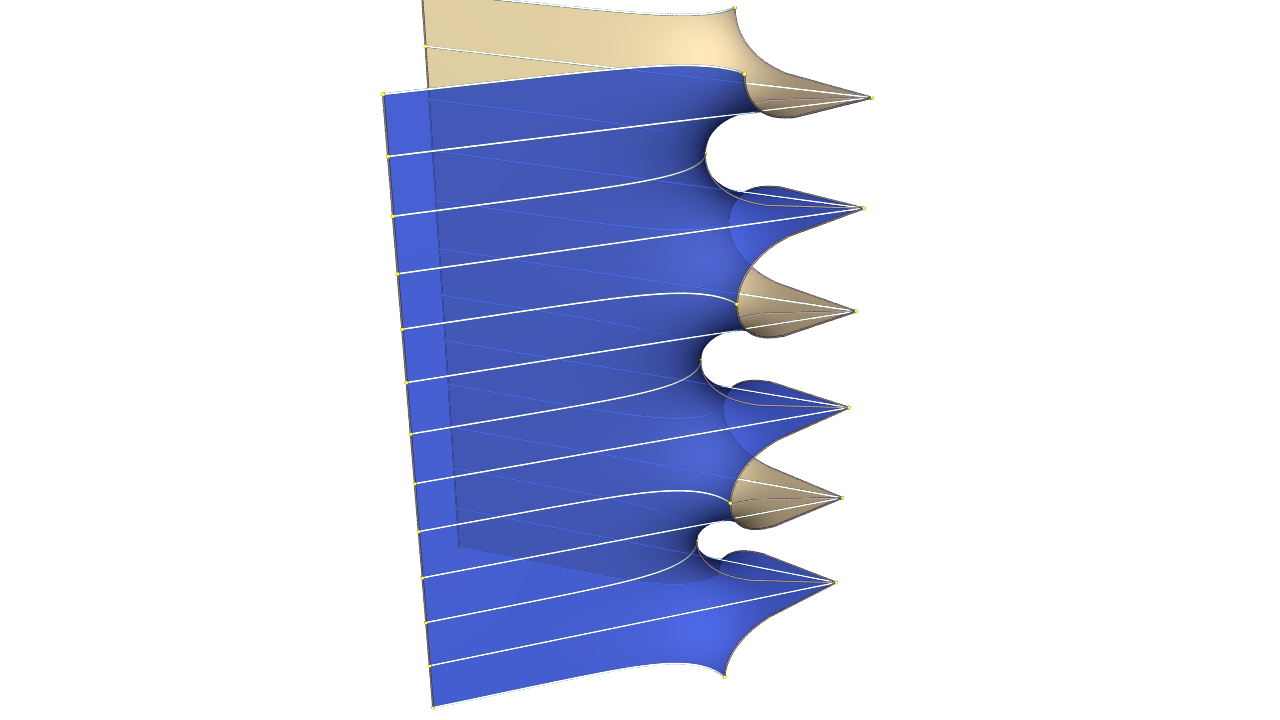}}
		\subfigure[Limit of a Wedge]{\includegraphics[width=1.5in]{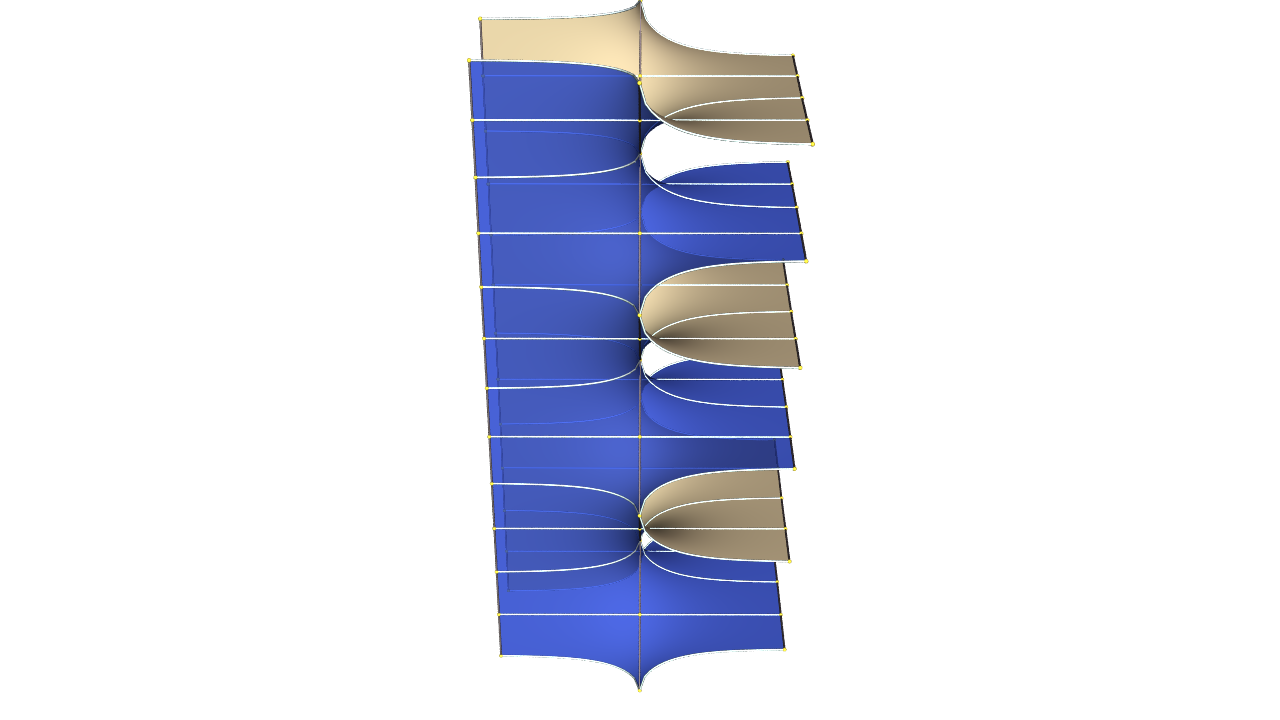}}
		\subfigure[Extended Limit]{\includegraphics[width=1.5in]{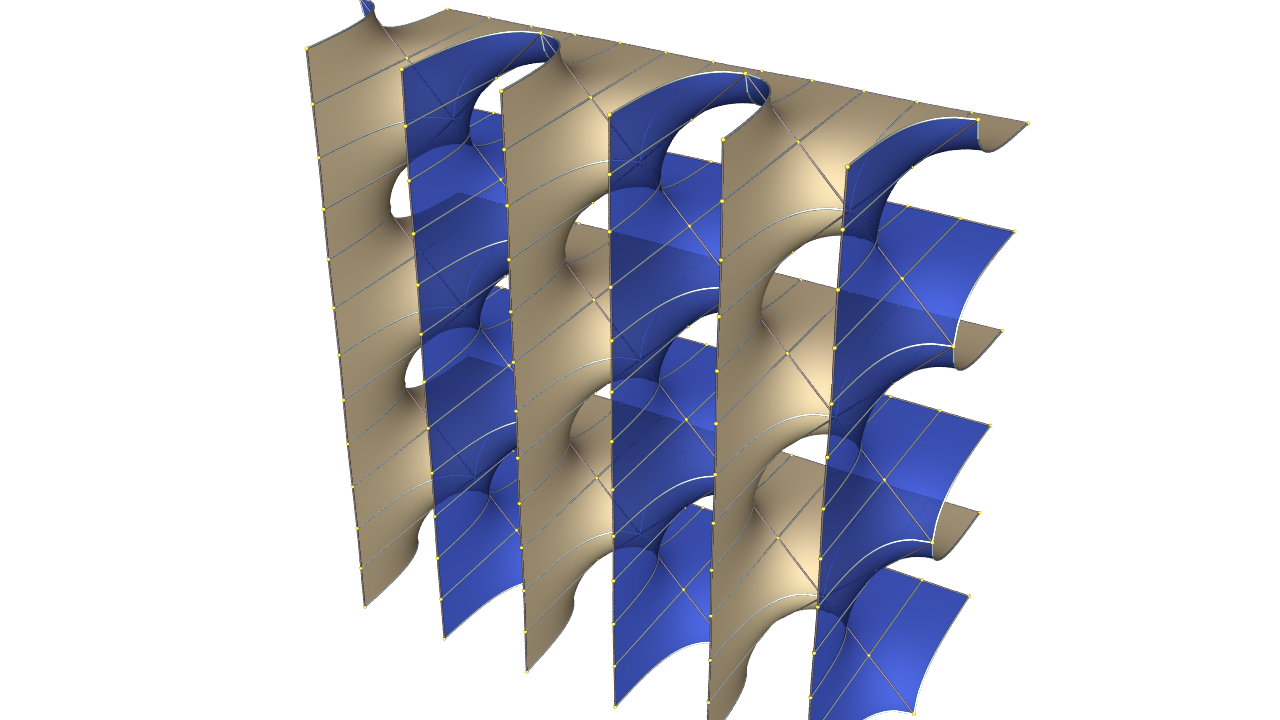}}
	\end{center}
	\caption{Dihedralization of Scherk Tower}
	\label{fig:ScherkTower}
\end{figure} 

This work presents results for minimal surfaces in the following class.
\begin{definition}
	A minimal surface $X_n:M_n\longrightarrow\R^3$ is called $n$-dihedral if $D_n$ is the largest dihedral subgroup of Sym$(X_n)$. When $X_n$ is a variant of an existing minimal surface $X$, we refer to $X_n$ as the dihedralized $X$.
\end{definition} 

A very well known example of $n$-dihedral surfaces are the so-called Scherk Towers found by Hermann Karcher (see Fig. \ref{fig:ScherkTower}(a) and \cite{ka4}). In order to put the discussion  of minimal surfaces with high dihedral symmetry into context of  free boundary problems in wedges, we define:
\begin{definition}\label{def:wedge}
	A minimal surface  $W_{2 \pi \alpha}:\mathcal{M}_{\alpha}\longrightarrow\R^3$, whose boundary lies on two symmetry planes making an angle $2 \pi\alpha \geq0$, is called a minimal wedge. In particular, the surface meets the bounding planes orthogonally.
\end{definition}
Hereafter, we will call $\alpha$ the dihedral angle on account of the fact that an $n$-dihedral minimal surface $X_n$ gives rise to a minimal wedge $W_{\frac{2\pi}{n}}$ which can be defined as a fundamental piece of ${X_n}/\langle r\rangle$. For example, the minimal wedge corresponding to a Scherk Tower of $2n$ ends is given in Fig. \ref{fig:ScherkTower}(b)). As mentioned earlier, for any $\alpha$, $W_{2\pi\alpha}$ can be extended to a complete minimal surface in $\R^3$, and we will refer to the extension of $W_{0}$ as the dihedral limit of the sequence $X_n$. As $\alpha$ goes to $0$, the limit $W_{0}$ becomes a minimal surface in a slab. Hence, the rotational symmetries of the sequence $X_n$ are replaced by the translational periodicity of its dihedral limit.

Surprisingly, we cannot always expect a dihedralized version of a minimal surface to exist. For example, our numerical experiments show that the $n$-dihedral variants of Wohlgemuth's  surface of genus $3$ (see \cite{wo3}) do not exist for integers $n \geq 3$. On the other hand, the dihedralized variations of the Costa-Hoffman-Meeks (see \cite{HK97}) and the Callahan-Hoffman-Meeks (see \cite{chm2}) surfaces are known to exist for all $n$; and they converge to the singly periodic Scherk and the Karcher-Meeks-Rosenberg surfaces (see \cite{ka4}, \cite{mr3}) respectively. It is therefore interesting to classify minimal surfaces on the basis of their ``dihedralizability".

This paper reveals that, indeed, there are a number of families of minimal surfaces that can be regenerated from their dihedral limit. In many cases, these limits are simpler than the original surfaces, and can be used in conjunction with the implicit function theorem to obtain new and simpler existence proofs of the original surfaces with small dihedral angle. This approach, which we call dihedralization, has also led us to the discovery of new minimal surfaces.

A prototype example of regeneration with dihedralization is an existence proof for the dihedralized Chen-Gackstatter surface of genus $3(n-1)$ (which we denote by $DE_{3,n}$) for large enough positive integer $n$, obtained by first proving the existence of its limit surface. Generally, this requires one to solve a $2$ dimensional system of equations that include integrand terms of exponents $1/(n-1)$. It is known that such surfaces exist (see \cite{tha3}, \cite{ww1}), but our argument provides a shorter proof that does not involve integration of the abovementioned terms with complicated exponents. Furthermore, through dihedralization arguments, we introduce two novel surfaces illustrated in Figures \ref{fig:DCCW}(a) and \ref{fig:DKS}(a), as described by the following two theorems.

\begin{theorem}\label{DCCWtheorem}
		For sufficiently large $n\in \N$, there exists an $n$-dihedral, finite type minimal surface $DCCW_{n}$ of genus $2n-2$ with four catenoidal ends, which is invariant under a reflection with respect to a horizontal plane. Moreover, as $n \rightarrow \infty$, $DCCW_{n}$ converges to an embedded singly periodic minimal surface of genus $0$ with $6$ annular ends, which is invariant under horizontal translations. The limit surface is a less symmetric Scherk Tower of $6$ ends and it is also invariant under a vertical reflection and a horizontal reflection.              
\end{theorem}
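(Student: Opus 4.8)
The plan is to realize $\mathrm{DCCW}_n$ through the Weierstrass representation of its minimal wedge $W_{2\pi/n}$ of Definition~\ref{def:wedge} and then to regenerate it from the $\alpha=0$ limit by the implicit function theorem, in the spirit of Traizet's work cited in the introduction. First I would fix the conformal model: when $\alpha=1/n$, the reflection--extension $X_n$ is carried by a compact Riemann surface $\overline{\mathcal{M}}$ (punctured at the ends) on which $D_n$, a horizontal reflection, and the two wall reflections all act, and $W_{2\pi/n}$ is a fundamental domain; to build a \emph{continuous} family I instead track, for all $\alpha\ge 0$, the conformal type of $\mathcal{M}_\alpha$ together with its boundary conditions, working $D_n$-equivariantly so that everything reduces to data on the small quotient piece. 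I would write the Weierstrass data $(g,dh)$ in coordinates adapted to the symmetries, so that orthogonality of $W_{2\pi\alpha}$ with the walls and the requirement that the ends be catenoidal become reality and symmetry constraints on $g$ and $dh$, leaving only the requirement that the real periods of $\phi_1=\tfrac12(g^{-1}-g)\,dh$, $\phi_2=\tfrac{i}{2}(g^{-1}+g)\,dh$, $\phi_3=dh$ over a basis of the symmetry-reduced homology close up (one of them producing the horizontal translation of the limit). The wedge angle $\alpha$ enters the model only through the exponents of $g$ at the ``edge'' points where the two walls meet, which is precisely what produces integrands with exponents of order $1/(n-1)$.

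At $\alpha=0$ the walls become parallel, the edge points drop out, and $\overline{\mathcal{M}}_0\cong\mathbb{CP}^1$. Here I would write $g$ and $dh$ explicitly as rational and elementary data on $\mathbb{CP}^1$ carrying the prescribed symmetries --- a vertical reflection, a horizontal reflection, and the horizontal translational period --- with only a few free real parameters. The residual period conditions then collapse to the explicit $2$-dimensional real system already alluded to in the introduction, which I would solve directly, checking that the solution is exactly the advertised less-symmetric Scherk Tower with $6$ annular ends and has the claimed reflection symmetries; embeddedness of this limit is then inherited from the known structure of Karcher's Scherk Towers. This both pins down the claimed limit surface and supplies the base point $(\mathbf{t}_0,0)$ for the regeneration.

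The heart of the argument is to show that the period map $\mathcal{P}(\mathbf{t},\alpha)$, whose vanishing locus parametrizes minimal wedges, extends real-analytically across $\alpha=0$ and that $D_{\mathbf{t}}\mathcal{P}(\mathbf{t}_0,0)$ is an isomorphism. Analyticity across $\alpha=0$ is the subtle point: the naive period integrands carry the $\alpha$-dependent exponents together with a factor of $dh$ that degenerates at the edge as $\alpha\to 0$, and I would deal with this by an analytic change of variables blowing up the edge points, after which the integrals depend analytically on $\alpha$ down to $\alpha=0$ --- this is exactly what lets us avoid integrating the complicated $1/(n-1)$-exponent terms used in \cite{tha3,ww1}. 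Nondegeneracy of $D_{\mathbf{t}}\mathcal{P}$ at the limit I would verify by a direct computation with the explicit $\mathbb{CP}^1$ data, equivalently by checking that $W_0$ carries no nontrivial equivariant Jacobi field of the relevant type. Granting these two facts, the implicit function theorem produces a unique analytic branch $\alpha\mapsto\mathbf{t}(\alpha)$ of solutions for $\alpha$ near $0$, hence genuine minimal wedges $W_{2\pi/n}$ for all sufficiently large $n$.

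Extending $W_{2\pi/n}$ by reflections gives the complete surface $\mathrm{DCCW}_n$, which by construction carries a dihedral symmetry group containing $D_n$ together with a horizontal reflection. Its genus and the number and type of its ends follow from an Euler-characteristic and Riemann--Hurwitz count on $\overline{\mathcal{M}}$ combined with the local behavior of $g$ at the ends (simple behavior $\Rightarrow$ catenoidal), yielding genus $2n-2$ and four catenoidal ends; that $n$ is precisely the largest dihedral order, and that $\mathrm{DCCW}_n\to W_0$ as $n\to\infty$, are immediate from the analytic dependence on $\alpha$ established in the previous step. I expect this third step --- establishing analyticity of the period map across the topology change at $\alpha=0$, and verifying the nondegeneracy of the limit configuration --- to be the main obstacle; once the moduli problem is arranged so that $\alpha=0$ is a smooth (indeed analytic) boundary point, the remainder is bookkeeping.
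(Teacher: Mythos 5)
Your strategy---construct $\mathrm{DCCW}_n$ via Weierstrass data on the quotient by $D_n$, reduce to a period map $\mathcal{P}(\mathbf{t},\alpha)$, solve at $\alpha=0$, and regenerate by the implicit function theorem---is exactly the paper's approach at the level of architecture.  However, several of the concerns you flag do not arise in the paper's actual execution, and one step you gloss over is where the paper has to work.

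The subtlety you identify as ``the heart of the argument,'' analytic extension of $\mathcal{P}$ across $\alpha=0$ and the need for a blow-up of the edge points, simply disappears in the paper's parametrization.  The quotient $DCCW_n/\langle r\rangle$ is conformally $\widehat{\mathbb{C}}$ for \emph{every} $\alpha$, and the paper writes $G\,dh$ and $G^{-1}dh$ as Schwarz--Christoffel integrands on the upper half plane, with branch points at fixed real parameters $\pm1,\pm a,\pm b,\pm c$ and exponents depending \emph{affinely} on $\alpha$.  In particular the height differential
\[
  dh=\frac{(b-z)(z+b)}{(a-z)(a+z)(c+z)(c-z)}\,dz
\]
does not depend on $\alpha$ at all, so your worry about ``a factor of $dh$ that degenerates at the edge as $\alpha\to 0$'' is unfounded for this construction.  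The period integrals are taken over small circles $\gamma_{i,j}$ around pairs of consecutive branch points, which stay away from the singularities; consequently $\mathcal{P}(a,b,c,\alpha)$ is automatically analytic in all its arguments including $\alpha=0$, no desingularization required.  At $\alpha=0$ the integrands are rational, so the periods become finite residue sums and the problem collapses to algebra.  The paper then fixes $c=b^2/a$ (equalizing the growth rates of the two catenoidal ends --- a reduction you do not mention), exhibits the explicit solution $a=-3\sqrt2+3\sqrt3+2\sqrt6-4$, $b=2\sqrt2+3$, and checks nondegeneracy of the Jacobian numerically; this is the clean replacement for your more abstract ``no nontrivial equivariant Jacobi field'' criterion.

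The place where the paper genuinely has to work, and where your sketch is too thin, is the identification of the $\alpha=0$ limit.  You write that embeddedness ``is inherited from the known structure of Karcher's Scherk Towers,'' but that presupposes the limit \emph{is} a Scherk Tower, which is the thing to be shown.  The paper gets this by counting that the limit is a singly periodic minimal surface of genus $0$ with annular (Scherk) ends, and then invoking the P\'erez--Traizet classification \cite{pt1} of genus-$0$ singly periodic minimal surfaces with Scherk ends to conclude it must be one of Karcher's saddle towers; the ``less symmetric'' qualifier is then justified by exhibiting an explicit order-three M\"obius automorphism of the upper half plane (sending $1\mapsto c$, $a\mapsto -c$, $c\mapsto -a$) that is realized as a rotational symmetry of the limit about a horizontal axis.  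Neither the classification theorem nor the extra $\mathbb{Z}_3$ symmetry appears in your outline, so that part of the theorem remains unproved under your plan.  In short: the high-level approach matches, but a more concrete conformal model (Schwarz--Christoffel on a fixed $\widehat{\mathbb{C}}$) removes the analyticity issue you anticipate, while the identification of the limit surface is the step that actually requires an external classification result.
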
 
The surface $DCCW_{n}$ as it appears in Theorem \ref{DCCWtheorem}, is a variant of Wolgemuth's Surface of genus $2$ (see \cite{wo3}) with all $4$ catenoidal ends instead of planar ends. Hence we call it the ``dihedralized catenoidal Costa-Wohlgemuth surface".
\begin{figure}[H]
	\begin{center}
		\subfigure[$\alpha>0$]{\includegraphics[width=3in]{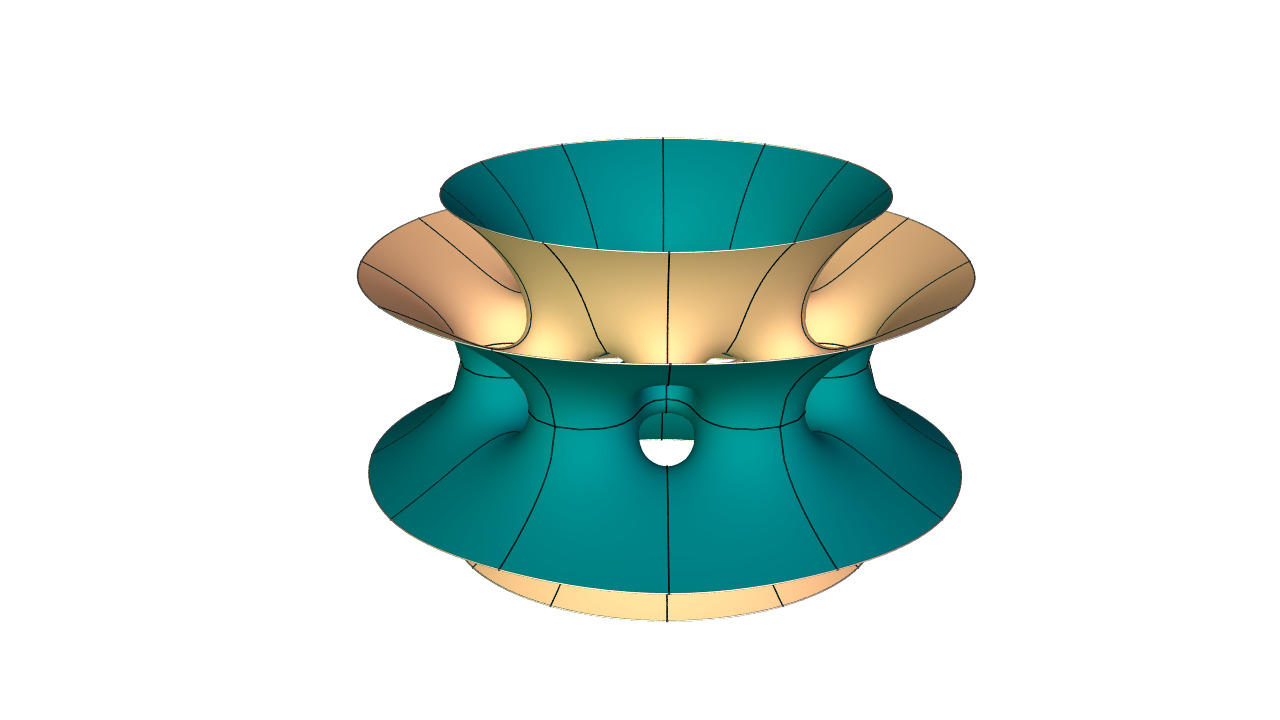}}
		\subfigure[$\alpha=0$]{\includegraphics[width=2.6in]{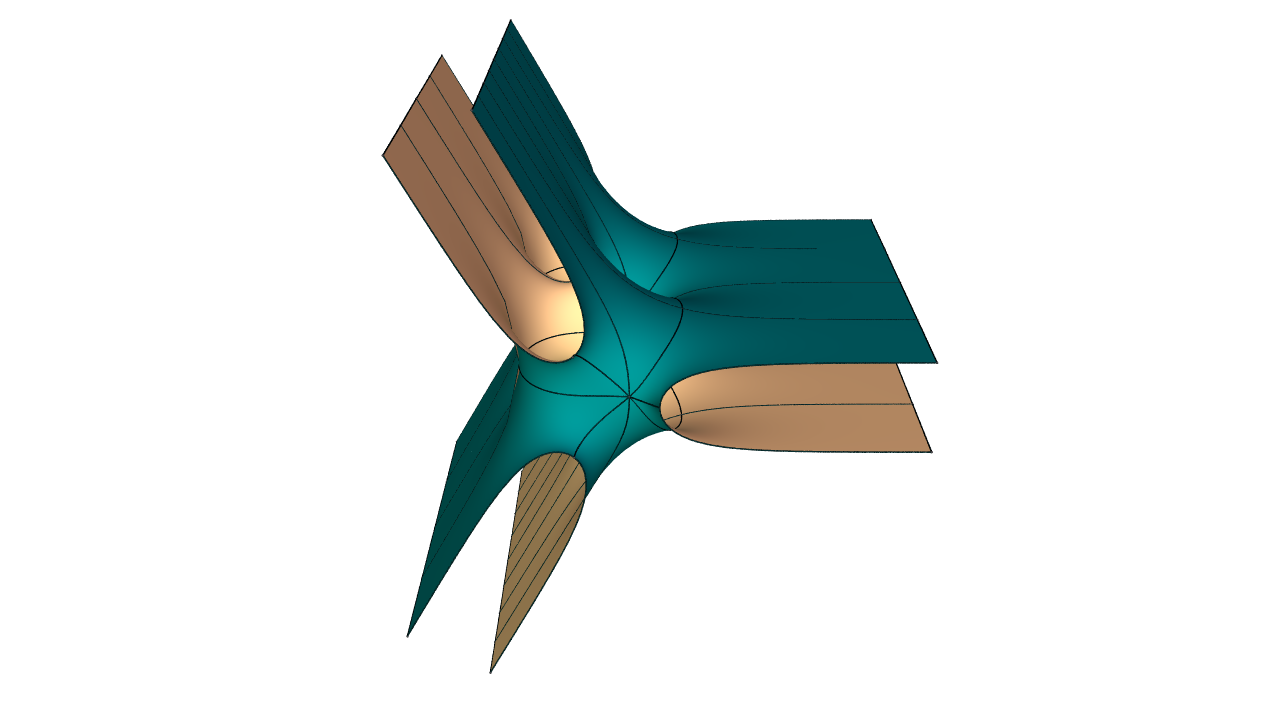}}
	\end{center}
	\caption{$DCCW_n$ and its limit $\R^3$}
	\label{fig:DCCW}
\end{figure}
	\begin{theorem}\label{DKStheorem}
	For sufficiently large $n\in \N$, there exists a complete, embedded, singly periodic $n$-dihedral minimal surface $DKS_{n}$ of genus $n$ with $2n$ annular ends, which is invariant under reflection with respect to a horizontal plane. Moreover, as $n \rightarrow \infty$, $DKS_{n}$ converges to an embedded doubly periodic minimal surface of genus $1$ with $4$ annular ends. The limit surface, known as the doubly periodic Karcher-Scherk surface of genus $1$, is invariant under a vertical reflection and a horizontal reflection.
\end{theorem}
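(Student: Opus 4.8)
\emph{Proof sketch.}
The plan is to realize $\mathrm{DKS}_n$ as the reflective extension of a minimal wedge $W_{2\pi/n}$, and to produce $W_{2\pi/n}$ for large $n$ by perturbing off the limit wedge $W_0$, whose extension is the genus-$1$ doubly periodic Karcher--Scherk surface. Concretely, I would first fix a combinatorial model for the fundamental piece $\mathcal{M}_\alpha$: a fixed compact Riemann surface $\overline{\mathcal{M}}_\alpha$ of small genus (the quotient of $M_n$ by the order-$n$ rotation $\langle r\rangle$ and the vertical period, which one expects to be a torus or a sphere with a few punctures), equipped with the anti-holomorphic involutions that encode the reflections in the two bounding planes of the wedge and in the horizontal plane. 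On this model I would write Weierstrass data $(g_\alpha, dh_\alpha)$ depending real-analytically on a finite parameter vector $p$ (locations of the ends, branch points of $g$, a scale for $dh$, and the opening parameter $\alpha$), with $g_\alpha$ having the prescribed zeros and poles at the ends and the symmetry relations imposed. The dihedral angle enters through the twist in the periods of $\log g$ around the rotation axis, arranged so that $W_{2\pi\alpha}$ closes up to an $n$-dihedral surface precisely when $\alpha = 1/n$.

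The second step is to set up the period (closing) map $F(p,\alpha)$ whose vanishing is equivalent to $(g_\alpha, dh_\alpha)$ integrating to a well-defined minimal immersion with the correct end behavior (horizontal period problem, vertical flux, and orthogonality of the boundary to the two symmetry planes); after using the built-in symmetries to discard redundant conditions, $F$ should be a map between Euclidean spaces of equal, small dimension. The delicate point, flagged in the introduction, is that the wedge degenerates to a slab as $\alpha \to 0$: the two edges of the wedge run off to infinity, so one must rescale --- blowing up a neighborhood of the rotation axis at the correct rate --- and choose coordinates on moduli space in which $F$ extends real-analytically across $\alpha = 0$. With the right normalization, $F(\cdot, 0)$ is the classical period problem for a genus-$1$ minimal surface in a slab with four Scherk-type ends, namely the doubly periodic Karcher--Scherk surface, which is classically known to exist (see \cite{ka4}); this furnishes a point $p_0$ with $F(p_0,0) = 0$.

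The heart of the argument is the nondegeneracy: I would compute $\partial_p F(p_0,0)$ and show it is an isomorphism. This is the statement that the genus-$1$ doubly periodic Karcher--Scherk surface is an unobstructed solution of its period problem, which can be checked either by a direct residue/period computation with its explicit elliptic Weierstrass data, or by appealing to the known local structure of the moduli space of doubly periodic minimal surfaces with parallel ends near this example. Granting this, the implicit function theorem yields a real-analytic family $p(\alpha)$ with $F(p(\alpha),\alpha) = 0$ for $\alpha \in [0,\epsilon)$; setting $\alpha = 1/n$ for $n > 1/\epsilon$ and extending $W_{2\pi/n}$ by the reflections in the two bounding planes (and their rotational consequences) produces a complete minimal surface $\mathrm{DKS}_n$ on which $D_n$ acts. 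I expect this nondegeneracy, together with the analytic control of the $\alpha \to 0$ edge degeneration, to be the main obstacle; the rest is comparatively routine.

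Finally I would read off the asserted geometric features. The horizontal and vertical reflection symmetries and the $D_n$-action are present by construction, and single periodicity is inherited from the vertical period of the wedge. The genus ($n$) and number of ends ($2n$) follow from a Riemann--Hurwitz / Euler-characteristic count applied to the $n$-fold cover $M_n \to \overline{\mathcal{M}}_{1/n}$ together with the count of ends in the fundamental piece. Embeddedness for large $n$ follows from the facts that $W_0$ is embedded, that the family converges smoothly to $W_0$ on compact subsets of the slab, and that each end is a Scherk-type (hence embedded and asymptotically flat) annular end whose position and limiting normal depend continuously on $\alpha$; a separating argument together with the maximum principle then rules out self-intersections for all sufficiently large $n$ and shows that $\mathrm{DKS}_n$ is properly embedded, singly periodic, and invariant under the stated reflections, while the convergence $\mathrm{DKS}_n \to$ Karcher--Scherk is exactly the statement $p(1/n) \to p_0$. $\qed$
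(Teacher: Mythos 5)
Your strategy coincides with the paper's at every structural level: realize DKS$_n$ by extending a minimal wedge $W_{2\pi/n}$, define a period map $F(p,\alpha)$ whose zero set encodes the closing conditions, identify $F(\cdot,0)=0$ with the period problem for the genus-one doubly periodic Karcher--Scherk surface, and regenerate via the implicit function theorem. You also correctly guess that the quotient is a torus; the paper accordingly writes $G$ and $dh$ as products of $\vartheta$-functions on $\C/\Lambda_\tau$, with the dihedral angle entering through a fractional exponent $(1-\alpha)$ and a constraint $b=a(1-\alpha)+\alpha/2$ ensuring the two vertical boundary planes of the wedge are positioned correctly.

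The gap is in the nondegeneracy step, which you rightly call the heart of the argument but whose execution you misjudge. Your first suggestion, ``a direct residue/period computation with its explicit elliptic Weierstrass data,'' is precisely what the paper says does \emph{not} work: because the data are $\vartheta$-function expressions on a varying torus (with a fractional exponent for $\alpha>0$), the residue theorem is unavailable here, in contrast to the genus-zero cases $DE_{3,n}$ and $DCCW_n$. The paper's resolution is a device you do not anticipate: it fixes $\alpha=0$, $\tau=i$, rotates the limiting minimal hexagon in $\R^3$ so that all of its symmetry planes become vertical, and then reparametrizes a simply connected piece on the upper half plane via a Schwarz--Christoffel change of variables $T$. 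After this rotation the Weierstrass forms become algebraic, and the Jacobian of the reparametrized period map $\tilde P$ factors explicitly as $-\tfrac{i}{\rho(\tilde a)^3}f_1(\tilde a)f_2(\tilde a)$ for two real integrals $f_1,f_2$ whose definite signs on $(0,1)$ give nonvanishing. Your alternative, appealing to a known local structure theorem for the moduli space of doubly periodic surfaces near Karcher--Scherk, is not something the paper invokes and would require a reference you do not supply. So the skeleton of your argument is sound, but as written neither of your proposed routes to the crucial Jacobian computation would go through; the rotation-and-reparametrization trick is the missing ingredient.
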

\begin{figure}[H]
	\begin{center}
		\subfigure[$\alpha>0$]{\includegraphics[width=3in]{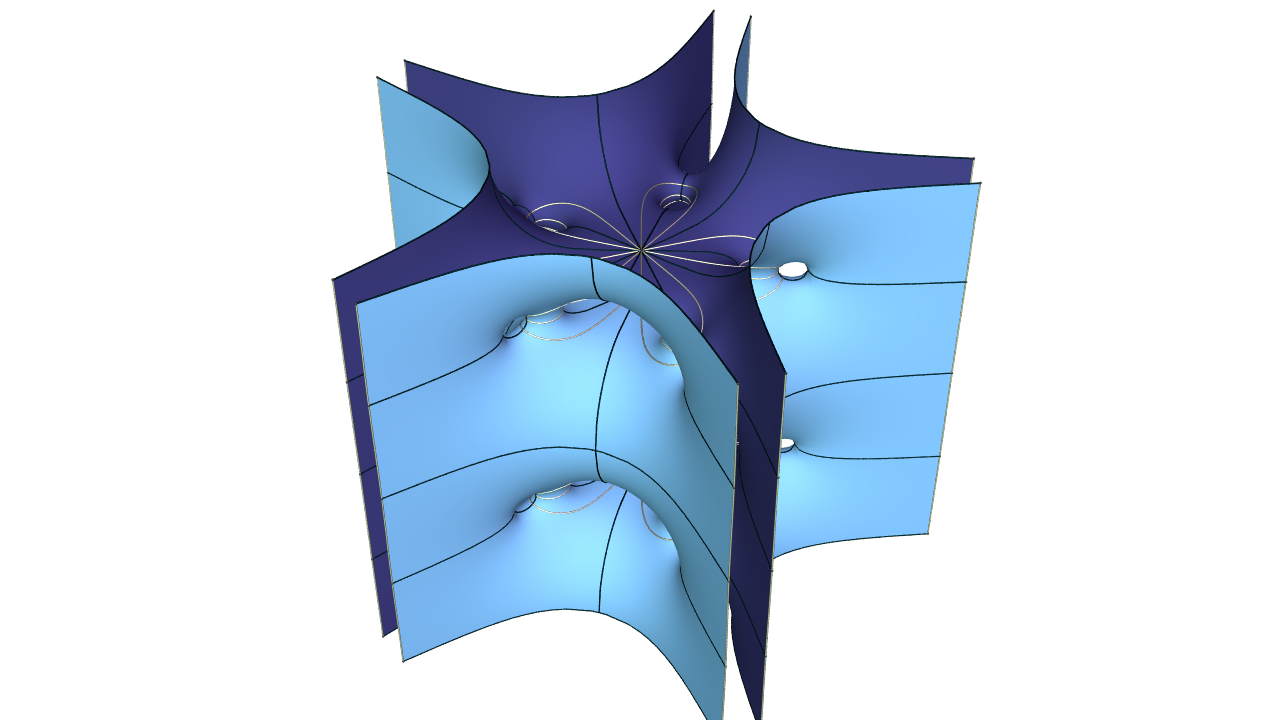}}
		\subfigure[$\alpha=0$]{\includegraphics[width=2.6in]{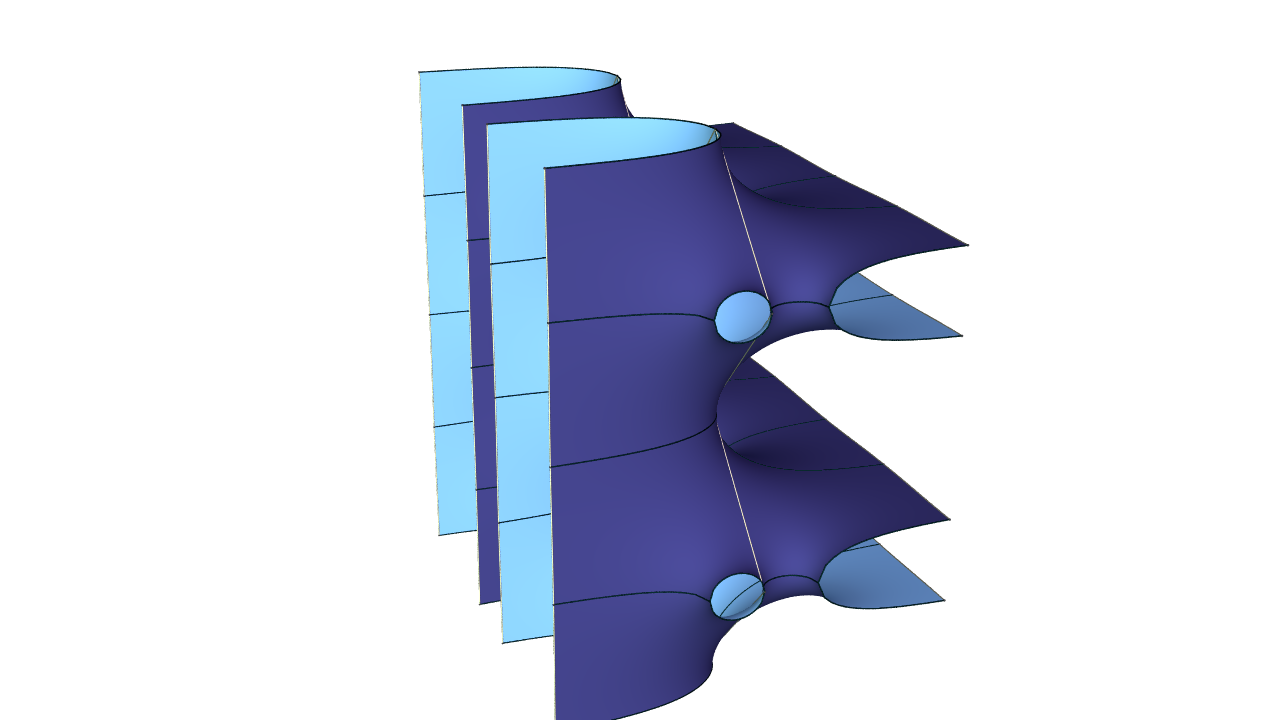}}
	\end{center}
	\caption{DKS$_n$ and its limit $\R^3$}
	\label{fig:DKS}
\end{figure}
On the other hand, the surface given in Theorem \ref{DKStheorem}, and aptly called the ``dihedralized Karcher Scherk surfaces with handles", can be seen as Scherk towers with added handles.\\

The paper is organized as follows: In Section \ref{sec:geom}, we demonstrate that the assumed symmetries of our surfaces have strong implications on the flat structures of the Weierstrass $1$-forms, allowing us to parametrize the surfaces via Schwarz-Christoffel maps and $\vartheta$-functions which are also introduced in Section \ref{sec:geom}. Section \ref{sec:DE} contains an existence proof for the $DE_{3,n}$ surfaces and the corresponding wedges $W_{2\pi\alpha}$. This proof serves as the prototype example of the new dihedralization method. In Section \ref{sec:DWW},  employing dihedralization, we provide a concise proof for the existence of a $n$-dihedral Weber-Wolf surface with 4 catenoidal ends and a planar end. While acknowledging the existing knowledge about these surfaces, we present brief proofs that highlight the methodologies of the dihedralization process.

Building upon the methodology established in Section \ref{sec:DE} and \ref{sec:DWW}, we extend our exploration to novel surfaces. In Section \ref{sec:DCCW}, we give a proof for Theorem \ref{DCCWtheorem}. Finally, in Section \ref{sec:DKS}, we establish the proof of Theorem \ref{DKStheorem} by leveraging $\vartheta$-functions and Schwarz-Christoffel maps.
\section{Geometry of the Weierstrass Representation}\label{sec:geom}

Let a minimal map (i.e. a conformal parametrization of a minimal surface) be given by

\begin{equation}\label{f}
f(z) = \re \int^z (\omega_1, \omega_2, \omega_3)
\end{equation}
where

$$ \omega_1 = {}  \frac12 (\frac1G-G)\, dh, \qquad \omega_2 = {}  \frac{i}2 (\frac1G+G)\, dh, \qquad \omega_3= {} dh.$$

Here, the meromorphic function $G$ is the stereographic projection of the Gauss map, and the holomorphic 1-form $dh$ is called the height differential.

Recall that multiplying $dh$ by a real factor scales the surface, and multiplying it by $e^{i t}$ is the Bonnet deformation. 
Multiplying $G$ by a real factor is called the L{\'o}pez-Ros deformation, while multiplying $G$ by $e^{i t}$ rotates the surface about a vertical axis by the angle $\varphi$.


Let $f:U \to \R^3$ be a minimal map, given by Weierstrass data $G$ and $dh$.
Introduce  $\Omega_k(z) =\int^z \omega_k$,  $\Phi_1(z) =\int^z G\, dh$, and $\Phi_2(z) =\int^z \frac1G\, dh$.

We will next explain that the particular symmetries we assume about our surfaces imply that the flat structures of $dh$ , $G\, dh$ and $\frac1G\, dh$ satisfy certain conditions. This is crucial for our line of reasoning, because it will allow us to define  $dh$ , $G\, dh$ and $\frac1G\, dh$ either as integrands of Schwarz-Christoffel maps from the upper half plane to Euclidean polygons or as integrands of rational functions with $\vartheta$-function factors on tori.

\begin{proposition}\label{prop:symmetries} Symmetries of minimal surfaces. 
	
	\begin{enumerate}
		\item Suppose that $\Omega_3$, $\Phi_1$, and $\Phi_2$ extend continuously to  the real interval $(a,b)\subset \partial U$ and map it to a segment orthogonal to, to a segment making angle $\alpha$ with, and to a segment making angle $-\alpha$ with the real axis, respectively. Then the Schwarz reflection principle guarantees that $\Omega_3$, $\Phi_1$, and $\Phi_2$ and thus $f$ can be extended across $(a,b)$ by reflection. We claim that this extension of $f$ is realized by a $180^\circ$ rotation  about  $f(a,b)$, which is a horizontal straight line in $\R^3$ making angle $\alpha$ with the $x$-direction.\\
		
		\item Suppose that $\Omega_3$, $\Phi_1$, and $\Phi_2$ extend continuously to  the real interval $(a,b)\subset \partial U$ and map it to a segment parallel to, to a segment making angle $\alpha$ with, and to a segment making angle $-\alpha$ with the real axis, respectively. Then the Schwarz reflection principle guarantees that $\Omega_3$, $\Phi_1$, and $\Phi_2$ and thus $f$ can be extended across $(a,b)$ by reflection. Then $f(a,b)$ is a reflectional symmetry curve in a vertical plane  in $\R^3$ making angle $\alpha$ with the $x$-axis.\\
		
		\item Suppose that $\sigma$ is a conformal involution of $U$ such that $\sigma^*G=\conj{\frac{1}{G}}$ and $\sigma^*dh=-\conj{dh}$, then $f$ is symmetric with respect to reflection on a horizontal plane in $\R^3$ that contains image of fixed points of $\sigma$ under $f$.
	\end{enumerate}

\end{proposition}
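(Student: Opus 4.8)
\smallskip
\noindent\textbf{Proof strategy.}
All three parts run on the same template: rewrite the hypothesis as an (anti)holomorphic functional equation for the developing maps of the relevant flat structures, transport it through the constant linear map that relates $(\Phi_1,\Phi_2,\Omega_3)$ to $f=(\Omega_1,\Omega_2,\Omega_3)$, and then pin down the affine constant by a fixed point.

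For (1) and (2) the starting point is the elementary identity
\[
\Omega_1=\tfrac12(\Phi_2-\Phi_1),\qquad \Omega_2=\tfrac{i}{2}(\Phi_1+\Phi_2),\qquad \Omega_3=\Omega_3,
\]
so that, with $F=(\Phi_1,\Phi_2,\Omega_3)^{\mathsf T}$, we have $f=\re(MF)$ for the constant matrix $M$ whose rows are $(-\tfrac12,\tfrac12,0)$, $(\tfrac i2,\tfrac i2,0)$, $(0,0,1)$. Each component of $F$ maps the interval $(a,b)$ into a straight line, so Schwarz reflection extends it holomorphically across $(a,b)$; recording the three reflection formulas at once gives $F(\bar z)=D\,\overline{F(z)}+v$, where $D=\operatorname{diag}(e^{2i\alpha},e^{-2i\alpha},\varepsilon)$ with $\varepsilon=-1$ in case (1) (since $\Omega_3((a,b))\perp\R$) and $\varepsilon=+1$ in case (2) (since it is parallel to $\R$), and $v\in\C^3$ a constant built from the base points of the three lines. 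Hence
\[
f(\bar z)=\re\!\big(MD\,\overline{F(z)}\big)+\re(Mv)=\re\!\big(\overline M\,\overline D\,F(z)\big)+\re(Mv).
\]
The plan is then to verify the purely algebraic identity $\overline M\,\overline D=R_\alpha M$, where
\[
R_\alpha=\begin{pmatrix}\cos2\alpha & \sin2\alpha & 0\\ \sin2\alpha & -\cos2\alpha & 0\\ 0&0&\varepsilon\end{pmatrix};
\]
since $R_\alpha$ is real this yields $f(\bar z)=R_\alpha f(z)+\re(Mv)$. One recognizes $R_\alpha$ as the $180^\circ$ rotation about the horizontal line of direction $(\cos\alpha,\sin\alpha,0)$ when $\varepsilon=-1$, and as the reflection in the vertical plane containing that direction when $\varepsilon=+1$. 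Because every $t_0\in(a,b)$ is fixed by $z\mapsto\bar z$, the point $f(t_0)$ is a fixed point of the affine map $x\mapsto R_\alpha x+\re(Mv)$, so this map is exactly that rotation (resp.\ reflection) translated so that its axis (resp.\ plane) passes through $f(t_0)$. It remains to compute $f$ on $(a,b)$ from the three line parametrizations: in case (1) one finds $f(a,b)$ is a horizontal segment of direction $(\cos\alpha,\sin\alpha,0)$, i.e.\ making angle $\alpha$ with the $x$-axis, and in case (2) one finds the horizontal projection of $f(a,b)$ moves along $(\cos\alpha,\sin\alpha,0)$ while the height varies freely, so $f(a,b)$ lies in the vertical plane making angle $\alpha$ with the $x$-axis. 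This is precisely the fixed locus of $R_\alpha$ through $f(t_0)$, which is the assertion.

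For (3) the same template is shorter. Using $\sigma^*G=\conj{1/G}$, $\sigma^*dh=-\conj{dh}$ and the multiplicativity of $\sigma^*$, a one-line computation gives $\sigma^*\omega_1=\conj{\omega_1}$, $\sigma^*\omega_2=\conj{\omega_2}$, $\sigma^*\omega_3=-\conj{\omega_3}$. Taking real parts (using $\sigma^*\re\omega_k=\re\sigma^*\omega_k$) yields $d(f_k\circ\sigma)=df_k$ for $k=1,2$ and $d(f_3\circ\sigma)=-df_3$, so after integration $f\circ\sigma=(f_1+a_1,\,f_2+a_2,\,-f_3+a_3)$ for constants $a_k$. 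Imposing $\sigma^2=\mathrm{id}$ forces this affine map to be an involution, hence $a_1=a_2=0$, so $f\circ\sigma$ is the reflection in the horizontal plane $\{x_3=a_3/2\}$; evaluating at a fixed point $p$ of $\sigma$ gives $f_3(p)=a_3/2$, i.e.\ $f$ maps $\mathrm{Fix}(\sigma)$ into that plane, as claimed.

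The individual computations are routine. The two places that need care are (i) getting the Schwarz reflection formulas right --- the correct half-plane, the doubling $e^{i\theta}\mapsto e^{2i\theta}$ of the direction of the target line, and the constant $v$ --- and (ii) in part (3), noting that $\sigma$ must be anti-holomorphic for $\sigma^*G=\conj{1/G}$ to typecheck and interpreting $\conj{dh}$ as the conjugate $1$-form so that $\sigma^*\omega_k=\pm\conj{\omega_k}$ holds literally. Neither is a genuine obstacle, but both are where an otherwise mechanical argument can go wrong; the conceptual content --- that all the required congruences are $180^\circ$ rotations about horizontal lines, reflections in vertical planes, or reflections in horizontal planes, and that their fixed loci are exactly the curves $f(a,b)$ resp.\ $f(\mathrm{Fix}\,\sigma)$ --- is immediate once the linear algebra is in place.
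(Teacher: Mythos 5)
Your argument is correct, and it takes a genuinely more explicit route than the paper's for parts (1) and (2). The paper first normalizes to $\alpha=0$ by multiplying $G$ by $e^{-i\alpha}$ (which rotates $\Phi_1(a,b)$, $\Phi_2(a,b)$ to be real-parallel while rotating the surface about a vertical axis), and then reads off directly that Schwarz reflection conjugates $G\,dh$ and $\tfrac1G\,dh$, hence preserves $\re\omega_1$ and flips $\re\omega_2,\re\omega_3$. You instead carry $\alpha$ through explicitly: package the three Schwarz reflection relations as $F(\bar z)=D\overline{F(z)}+v$ with $D=\operatorname{diag}(e^{2i\alpha},e^{-2i\alpha},\varepsilon)$, verify the matrix identity $\overline{M}\,\overline{D}=R_\alpha M$, and conclude $f(\bar z)=R_\alpha f(z)+\re(Mv)$; the fixed-point argument via $t_0\in(a,b)$ then pins down the axis/plane and simultaneously shows $f(a,b)$ lies in the appropriate horizontal line or vertical plane. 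I have checked the matrix identity and it holds, and identifying $R_\alpha$ as $2uu^{\mathsf T}-I$ (resp.\ the vertical-plane reflection) for $u=(\cos\alpha,\sin\alpha,0)$ is correct. The paper's route is shorter; yours makes the rigid motion and its fixed locus fully explicit without the normalization trick, which is a real clarity gain for a reader who might wonder why the $\alpha=0$ case suffices. For part (3) the two arguments coincide --- compute $\sigma^*\omega_k$ to see $\re\omega_1,\re\omega_2$ are preserved and $\re\omega_3$ flips --- except that you additionally use $\sigma^2=\mathrm{id}$ to kill the horizontal translation constants, a point the paper leaves implicit but which is needed to get a reflection rather than a glide. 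You are also right to flag that the hypothesis $\sigma^*G=\conj{1/G}$ forces $\sigma$ to be anticonformal, despite the paper's phrase ``conformal involution''; without that the right-hand side would be antiholomorphic while the left-hand side is holomorphic, forcing $G$ constant.
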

\begin{proof}
	To see (1) and (2), we first note that we can assume that $\alpha=0$. Otherwise we multiply $G$ by $e^{-\alpha i}$: This rotates the segments $\Phi_1(a,b)$ and $\Phi_2(a,b)$ to become parallel to the real axis, and leaves $\Omega_3(a,b)$ unchanged. On the other hand, it rotates the surface about a vertical  axis by angle $-\alpha$.
	
	Since now $\alpha=0$, extension across $(a,b)$  conjugates $G\, dh$  and $\frac1G\, dh$. Consequently,
	this leaves $\re\omega_1$ unchanged, while it turns   $\re\omega_2$  and $\re\omega_3$ into $-\re\omega_2$ and $-\re\omega_3$.
	
	Vice versa, if a minimal surface contains a horizontal straight line (necessarily a symmetry line) that is parametrized by a segment $(a,b)\subset \R$, the Weierstrass integrals above map $(a,b)$ to segments with the appropriate angles.
	
	Similarly,    $\Omega_3$, $\Phi_1$, and $\Phi_2$ map the real interval $(a,b)\subset U$ to a  segment parallel to, a segment making angle $\alpha$ with, and a segment making angle $-\alpha$ with the real axis, respectively, if and only if $f(a,b)$ is a reflectional symmetry curve in a vertical plane making angle $\alpha$ with the $x$-direction.
	
	Finally, let $\sigma$ be as in the proposition, then it fixes $\re\omega_{1}$ and $\re\omega_{2}$ while it turns $\re\omega_{3}$ to $-\re\omega_{3}$.  As a result $\sigma$ is realized by a reflection on a horizontal plane that contains fixed points of $\sigma$.

\end{proof}
In order to construct Weierstrass Data on tori, we will use the will use $\vartheta$-functions given below. For our purposes, $\vartheta$-functions on tori are analogous to linear functions on $\C$.
\begin{equation*}
	\vartheta(z)=\vartheta(z,\tau)=\sum_{n=-\infty}^{\infty}e^{\pi i(n+\frac{1}{2})^2\tau+2\pi i(n+\frac{1}{2})(z-\frac{1}{2})}
\end{equation*}
is one the classical Jacobi $\vartheta$-functions and it is an entire function with simple zeroes at the lattice points of the integer lattice spanned by $1$ and $\tau$, moreover, in a fundamental parallelogram, $\vartheta(z)$ has no further zeroes. it also enjoys the following properties:
\begin{enumerate}
	\item $\vartheta(-z)=-\vartheta(z)$,
	\item $\vartheta(z+1)=-\vartheta(z)$,
	\item $\vartheta(z+\tau)=-e^{-\pi i\tau-2\pi iz }\vartheta(z)$,
	\item $\vartheta'(0)\neq0$,
	\item $\vartheta(\conj{z})=\conj{\vartheta(z)}$ for $\re(\tau)=0$,
\end{enumerate}
Note that property (5) implies that $\vartheta(z)$ is real on the real line when $\tau$ is purely imaginary.\\

Our method for constructing minimal surfaces begins with the initial creation of a simply connected fundamental piece of minimal wedges $W_{2pi\alpha}$. These initial components serve as the foundation, and then we leverage prescribed symmetries to extend them into complete surfaces. In the next section, we will not only illustrate the construction methodology but also present a proof for the existence of the Chen-Gackstatter Surface of genus $3(n-1)$ with dihedral symmetry. This proof will be established through a dihedralization argument.
 
\section{The Chen-Gackstatter Surface of genus $3(n-1)$ with dihedral symmetry}\label{sec:DE}

Our objective is to prove the following theorem, providing insights into the methodology and details of dihedralization along the way. This will serve as a template for future examples. 
\begin{theorem}\label{DEtheorem}
	For sufficiently large values of $n\in \N$, there exists a finite type $n$-dihedral minimal surface DE$_{3,n}$ of genus $3n-3$ with one Enneper type end which is invariant under $180^\circ$ horizontal rotations. Moreover, as $n \rightarrow \infty$, DE$_{3,n}$ converges to a singly periodic minimal surface of genus 0 with an Enneper type end and, the dihedral limit surface is invariant under vertical translations and $180^\circ$ rotations around two horizontal lines.       
\end{theorem}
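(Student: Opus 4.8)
The plan is to realize the minimal wedge $W_{2\pi\alpha}$ with $\alpha = 1/n$ through the Weierstrass representation, to reduce its construction to a two–dimensional period problem, to solve that problem explicitly in the degenerate limit $\alpha = 0$, and finally to regenerate solutions for small $\alpha > 0$ by the implicit function theorem.

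First I would use Proposition \ref{prop:symmetries} to pin down the flat structures. The hypothesized symmetries — the two vertical symmetry planes bounding the wedge, together with the $180^\circ$ horizontal rotation axes — force $\Omega_3 = \int dh$, $\Phi_1 = \int G\,dh$ and $\Phi_2 = \int \frac1G\,dh$ to map the boundary arcs of a fundamental domain to straight segments meeting the real axis at angles $0$ and $\pm\alpha$, exactly as in parts (1) and (2) of the Proposition. Hence each of $dh$, $G\,dh$, $\frac1G\,dh$ is the integrand of a Schwarz--Christoffel map from $\mathbb{H}$ onto a Euclidean polygon, the Enneper end corresponding to a distinguished cone point and the edge of the wedge contributing, at certain vertices, exponents of size $\sim 1/n$ — the very $1/(n-1)$-type exponents alluded to in the introduction. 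After normalizing via the automorphisms of $\mathbb{H}$, the candidate Weierstrass data depend on two free real parameters $(t_1, t_2)$.

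Next I would set up the period problem. Essentially all periods around the handles of $\mathcal{M}_\alpha$ are forced to vanish by the symmetries and the polygonal structure, and what remains is a system
\begin{equation*}
\mathcal{P}(t_1, t_2, \alpha) = 0, \qquad \mathcal{P}\colon \R^2 \times [0,\varepsilon) \to \R^2 .
\end{equation*}
For $\alpha = 1/n$ a solution produces the closed finite–type surface $\mathrm{DE}_{3,n}$; for $\alpha = 0$, after rescaling the wedge to a slab, it produces the singly periodic genus $0$ limit. The key simplification is that at $\alpha = 0$ the factors carrying the exponent $\alpha$ disappear from the Schwarz--Christoffel integrands, so $\mathcal{P}(\,\cdot\,,\,\cdot\,,0)$ is assembled from elementary integrals; I would solve $\mathcal{P}(t_1, t_2, 0) = 0$ in closed form, exhibiting the limit surface, verifying that it has genus $0$ with a single Enneper end and that it is invariant under vertical translations and under $180^\circ$ rotations about two horizontal lines (here the hypotheses of Proposition \ref{prop:symmetries} are checked for the limit configuration).

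Finally I would run the regeneration. Writing $t^\alpha = e^{\alpha \log t}$ shows that $\mathcal{P}$ extends near $(t_1^0, t_2^0, 0)$ to a map that is real–analytic in $(t_1, t_2)$ and $C^\infty$ in $\alpha \in [0,\varepsilon)$. It then suffices to show that the $2\times 2$ partial Jacobian $\partial\mathcal{P}/\partial(t_1,t_2)$ is invertible at $(t_1^0, t_2^0, 0)$; the implicit function theorem yields a solution branch $\alpha \mapsto (t_1(\alpha), t_2(\alpha))$ for small $\alpha \ge 0$, and specializing to $\alpha = 1/n$ with $n$ large gives $\mathrm{DE}_{3,n}$. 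Smooth dependence of the Weierstrass data on $\alpha$ together with the degeneration of the slab picture yields convergence of $\mathrm{DE}_{3,n}$ to the limit surface as $n \to \infty$ (on compact sets, after the rescaling); a branching count for the Gauss map gives genus $3n-3$; the local behavior of $G$ and $dh$ at the puncture identifies the end as Enneper type; and one checks that for $\alpha = 1/n$ the symmetry group is exactly $D_n$, so that the surface is $n$-dihedral. I expect the real difficulty to be the non-degeneracy of the Jacobian at $\alpha = 0$: this requires differentiating the period integrals with respect to the remaining vertex positions and showing the resulting $2\times 2$ determinant is nonzero, which will need an explicit residue or monotonicity computation rather than a soft argument. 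A secondary technical point is establishing the regularity of $\mathcal{P}$ across $\alpha = 0$ and making precise the sense in which the wedge limits onto the slab.
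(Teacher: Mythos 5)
Your proposal follows essentially the same route as the paper: you construct the Weierstrass data via Schwarz--Christoffel maps for a fundamental half-wedge, reduce by symmetries and the L\'opez--Ros normalization to a two-dimensional period problem $\mathcal{P}(t_1,t_2,\alpha)=0$, solve it in closed form at $\alpha=0$ (the paper exhibits $a=\sqrt{3+\sqrt6}$, $b=\sqrt{5+2\sqrt6}$), and regenerate solutions for small $\alpha\ge 0$ by the implicit function theorem after verifying nondegeneracy of the Jacobian via residue computation. The technical concerns you flag --- smoothness of $\mathcal{P}$ across $\alpha=0$ and the Jacobian non-degeneracy --- are precisely the ones the paper addresses, so this is the intended argument.
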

	\begin{figure}[H]
	\begin{center}
		\includegraphics[width=6cm]{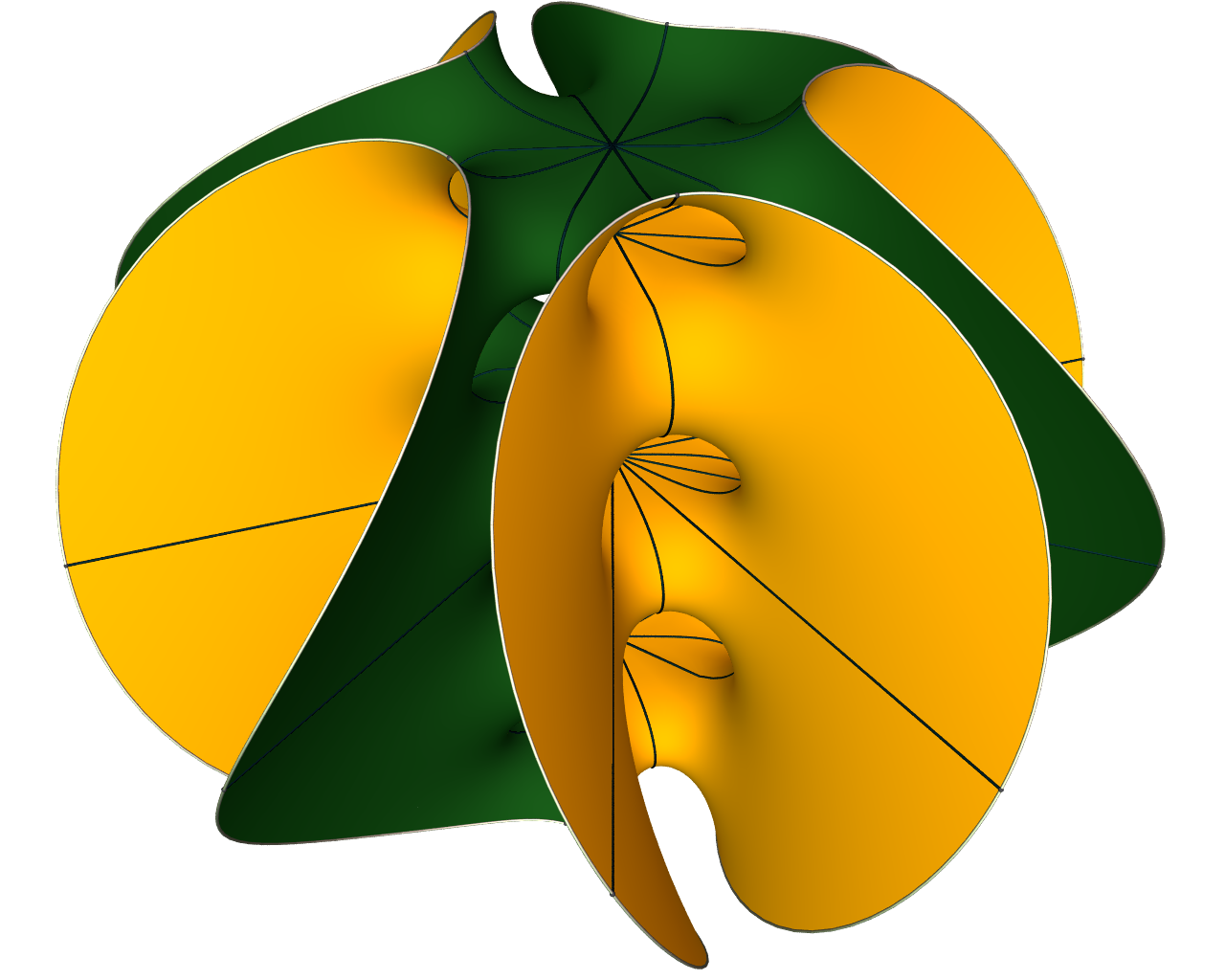}
	\end{center}
	\caption{Dihedralized Chen-Gackstatter Surface, $\alpha=\sfrac{1}{5}$}
	\label{fig:DEfull}	
\end{figure}
\begin{proof}[Proof of Theorem \ref{DEtheorem}]
We begin the proof by giving Lemma \ref{DEoctagon}, which contains a construction of a half of a wedge, corresponding to $DE_{3,n}$, which can be seen as a minimal octagon in Fig \ref{fig:DEoctagon}. For $\alpha=\frac{1}{n}$, this minimal octagon corresponds to $\frac{1}{2n}$th of the surface $DE_{3,n}$. 
\begin{lemma}\label{DEoctagon}
	For any $0<\rho$, $0\leq\alpha$ and $0<1<a<b<\infty$, there is a Weierstrass map $f$, cf. Section \ref{sec:geom}, that maps the upper half plane to a minimal octagon with one vertex at $\infty$ in $\R^3$. The edges of this octagon lie in vertical symmetry planes either parallel to the $x$-axis or make an angle $\alpha\pi$ with the $x$-axis alternatingly (if $\alpha=0$, then all the symmetry planes are parallel to the $x$-axis and all the vertices of the minimal polygons are at $\infty$). Moreover, there is a half straight line on this octagon that passes from two of the vertices and divides the octogon into two symmetrical pieces.  
\end{lemma}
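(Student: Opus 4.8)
The plan is to construct the three developing maps $\Omega_3(z)=\int^z dh$, $\Phi_1(z)=\int^z G\,dh$ and $\Phi_2(z)=\int^z\tfrac1G\,dh$ directly as Schwarz--Christoffel integrals on the upper half plane $U$, and then read off $f$, $G=\Phi_1'/\Omega_3'$ and $dh=\Omega_3'\,dz$. First I would place eight prevertices on $\partial U=\R\cup\{\infty\}$, namely $\infty$, $0$, three points $1<a<b$ and their reflections $-1,-a,-b$, so that the configuration is invariant under the anticonformal involution $\sigma(z)=-\bar z$ of $U$ (which fixes $0$ and $\infty$); the residual M\"obius freedom is spent normalizing the smallest positive prevertex to $1$, leaving $a$, $b$ and a positive real $\rho$ as the genuinely free parameters. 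To each prevertex I would attach a $\sigma$-symmetric triple of Schwarz--Christoffel exponents, one for each of $\Omega_3,\Phi_1,\Phi_2$, chosen so that: (i) $\Omega_3$ sends every boundary arc to a horizontal segment, while $\Phi_1$ sends the arcs to segments of slope alternately $0$ and $\alpha\pi$, and $\Phi_2$ to segments of slope alternately $0$ and $-\alpha\pi$ --- which is exactly the slope data that Proposition \ref{prop:symmetries}(2) reads off as ``the edge of $f(\partial U)$ over this arc lies on a vertical symmetry plane, parallel to or making angle $\alpha\pi$ with the $x$-axis''; (ii) at every prevertex the $\Phi_1$- and $\Phi_2$-exponents sum to twice the $\Omega_3$-exponent, which is the identity $(G\,dh)(\tfrac1G\,dh)=dh^2$ and makes $\Phi_1'/\Omega_3'$ and $\Phi_2'/\Omega_3'$ reciprocals of one another; and (iii) at $\infty$ the orders are those of an Enneper-type end, so that vertex is pushed off to infinity with Enneper growth. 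The parameter $\rho$ enters as the L\'opez--Ros factor, scaling $\Phi_1$ by $\rho$ and $\Phi_2$ by $1/\rho$ without disturbing (ii); together with $a$ and $b$ it is what one solves for when the periods are later closed in the proof of Theorem \ref{DEtheorem}.

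Once the integrands are fixed, $dh$, $G\,dh$ and $\tfrac1G\,dh$ are one-forms of the shape $(\text{const})\prod_j(z-p_j)^{c_j}\,dz$, holomorphic and zero-free on the open half plane, so $G$ is meromorphic with no interior zeros or poles and $f=\re\int^z(\omega_1,\omega_2,\omega_3)$ is a well-defined conformal minimal immersion of $U$; there is no period obstruction, because $U$ is simply connected and the only boundary singularities are integrable branch points at the finite prevertices together with a residue-free (Enneper) pole at $\infty$. By the Schwarz--Christoffel property each boundary arc maps under $\Omega_3,\Phi_1,\Phi_2$ to segments with the slopes arranged in (i), so Proposition \ref{prop:symmetries}(1)--(2) identifies $f(\partial U)$ as an octagonal contour whose edges lie alternately on vertical planes parallel to the $x$-axis and on vertical planes tilted by $\alpha\pi$, with $f(\infty)$ the single vertex at infinity; and when $\alpha=0$ the $\Phi_1$- and $\Phi_2$-slopes collapse to $0$, the three flat polygons open into unbounded slit regions with all eight vertices at infinity, and the octagon passes to the slab picture. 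For the diagonal half-line: since the configuration and all exponents are $\sigma$-symmetric, one has $\Omega_3\circ\sigma=-\overline{\Omega_3}$, $\Phi_1\circ\sigma=-\overline{\Phi_1}$, $\Phi_2\circ\sigma=-\overline{\Phi_2}$ up to additive constants, and feeding this through the formulas for $\omega_1,\omega_2,\omega_3$ shows that $\sigma$ is realized on $f$ by a $180^\circ$ rotation about a horizontal line --- the Schwarz-reflection mechanism of Proposition \ref{prop:symmetries}(1), applied now to an interior symmetry curve rather than a boundary arc. Its axis is the image under $f$ of the fixed imaginary axis of $\sigma$; it joins $f(0)$ to the vertex $f(\infty)$ at infinity, hence is a half-line through two vertices of the octagon, and it cuts the octagon into the two congruent halves $f(\{\re z>0\}\cap U)$ and $f(\{\re z<0\}\cap U)$.

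The only real work is the bookkeeping hidden in (i)--(iii): one must exhibit a single $\sigma$-symmetric assignment of the exponent triples satisfying all the constraints at once --- each exponent at least $-1$ for integrability (strictly above $-1$ exactly when the vertex is finite, which is where $\alpha>0$, equivalently $n$ finite, is used), the turning angles around each of the three flat polygons summing correctly so the developing map closes up, the reciprocity ``$\Phi_1$-exponent plus $\Phi_2$-exponent equals twice the $\Omega_3$-exponent'' holding at every prevertex, and the orders at $\infty$ being exactly those of an Enneper end. I expect reconciling the Enneper-end data at $\infty$ with the alternating-slope edge data and with $(G\,dh)(\tfrac1G\,dh)=dh^2$ to be the main obstacle; once a consistent exponent table is written down, the immersion property, all the symmetry claims, and the $\alpha\to0$ degeneration follow formally from Proposition \ref{prop:symmetries} and routine Schwarz--Christoffel bookkeeping.
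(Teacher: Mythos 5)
Your overall strategy---Schwarz--Christoffel integrands for $G\,dh$ and $\frac1G\,dh$ with $\sigma$-symmetric prevertices at $0,\pm1,\pm a,\pm b,\infty$, Proposition~\ref{prop:symmetries}(2) for the boundary edges, and the involution $\sigma(z)=-\bar z$ combined with Proposition~\ref{prop:symmetries}(1) for the diagonal half-line---is exactly the paper's. But the proposal stops short of the one step that \emph{is} the lemma: you list constraints (i)--(iii) that the exponent table must satisfy and then defer exhibiting such a table, calling it ``the main obstacle'' that you ``expect'' can be resolved. The lemma is an existence statement; until a consistent exponent assignment is actually produced, the argument is incomplete. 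What the paper does is simply write it down:
\begin{align*}
	\varphi_{1}&=z^{1-\alpha}(1-z^2)^{\alpha-1}(a^2-z^2)^{1-\alpha}(b^2-z^2)^{\alpha-1}\,dz,\\
	\varphi_{2}&=z^{\alpha-1}(1-z^2)^{1-\alpha}(a^2-z^2)^{\alpha-1}(b^2-z^2)^{1-\alpha}\,dz,
\end{align*}
with branches chosen so both are positive on $(0,1)$, and then sets $G\,dh=\rho\varphi_1$, $\frac1G\,dh=\rho^{-1}\varphi_2$. This satisfies your reciprocity constraint (ii) in the strongest form, $\varphi_1\varphi_2=dz^2$, so that $dh=dz$ and $\Omega_3(z)=z$ trivially maps the real axis to the real axis and the imaginary axis to the imaginary axis---your outline leaves $\Omega_3$ as a third independent SC map, which is more general than needed. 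Your alternating-slope condition (i) holds because the $\varphi_1$-exponents alternate $1-\alpha,\alpha-1,1-\alpha,\alpha-1$ across $0,1,a,b$ (with $\varphi_2$ the opposite), and the $\sigma$-symmetry of the configuration makes the imaginary axis an axis of symmetry of each developed polygon, yielding the horizontal straight line through $f(0)$ and $f(\infty)$ exactly as you describe. Your condition (iii) (prescribing Enneper-end orders at $\infty$) is true for this data but is not part of Lemma~\ref{DEoctagon}; that analysis belongs to Theorem~\ref{DEtheorem} and is carried out separately in Section~\ref{sec:type of ends}, so imposing it here over-constrains the problem at the wrong stage. In short: right framework, correct constraints, but the ``bookkeeping'' you set aside is precisely the proof, and it needs to be carried out.
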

\begin{figure}[H]
	\begin{center}
		\includegraphics[width=6cm]{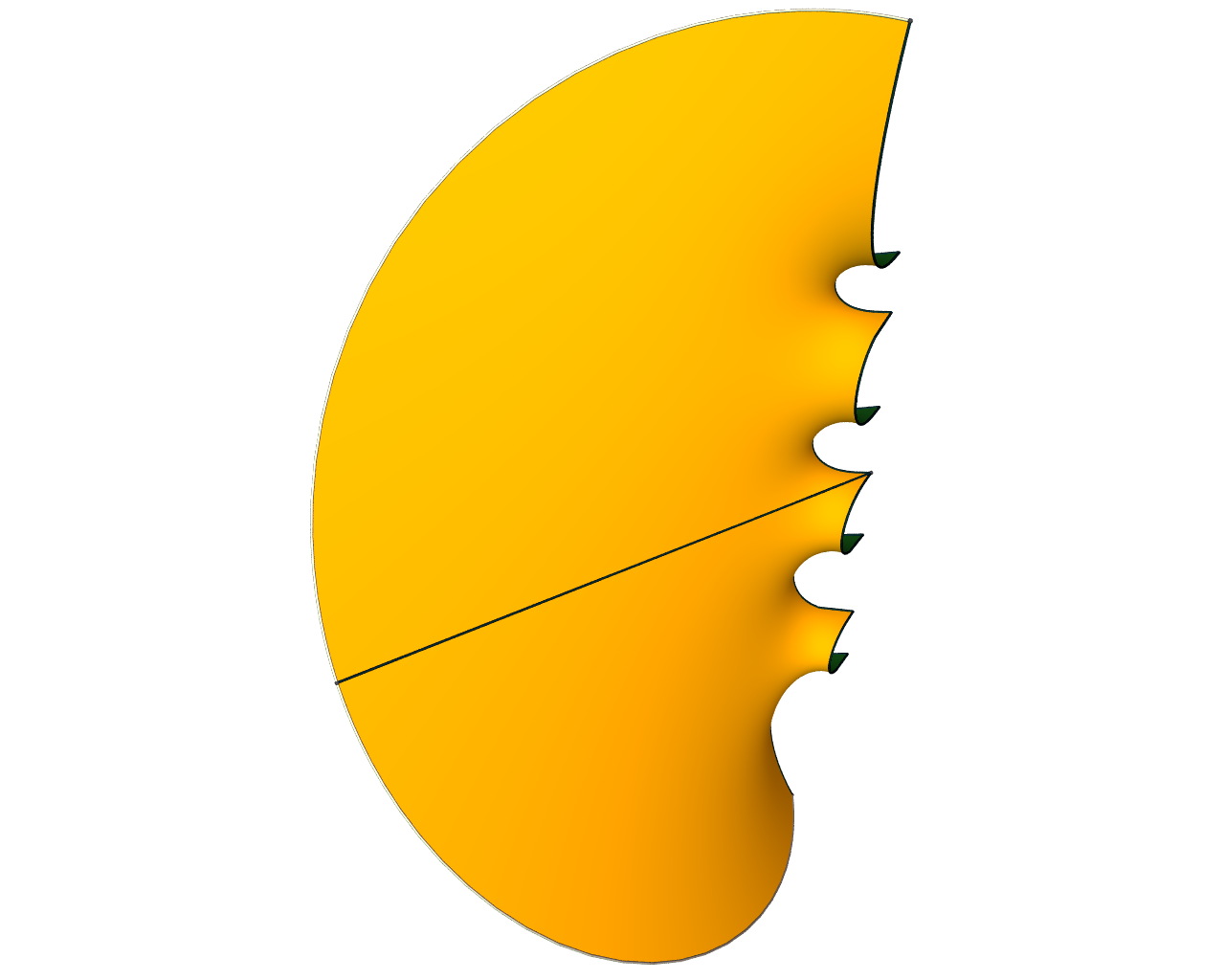}
	\end{center}
	\caption{A minimal octagon in $\R^3$ corresponding to $DE_{3,n}$}
	\label{fig:DEoctagon}
\end{figure}
\begin{proof}[Proof of Lemma \ref{DEoctagon}] First, when $\alpha$ is nonzero, we utilize two Schwarz-Christoffel maps to construct such a minimal octagon. We define,
			\begin{equation}\begin{split}	\label{deintegrands}
			\varphi_{1}&=
			z^{1-\alpha}
			(1-z^2)^{\alpha-1}
			(a^2-z^2)^{1-\alpha}
			(b^2-z^2)^{\alpha-1}dz ,
			\\	
			\varphi_{2}&=
			z^{\alpha-1}
			(1-z^2)^{1-\alpha}
			(a^2-z^2)^{\alpha-1}
			(b^2-z^2)^{1-\alpha}dz.
	\end{split}	\end{equation} 
One of the two Schwarz-Christoffel maps is defined by integrand $\varphi_{1}$ given in \eqref{deintegrands}, and this map sends the upper half plane to left side of the polygon Figure \ref{fig:DEflat}(a). The other Schwarz-Christoffel map is defined by integrand $\varphi_{2}$ given in \eqref{deintegrands} and sends the upper half plane to right side of the polygon seen in Figure \ref{fig:DEflat}(b).
\begin{figure}[H]
	\begin{center}
		\subfigure[$Gdh$]{\includegraphics[width=2in]{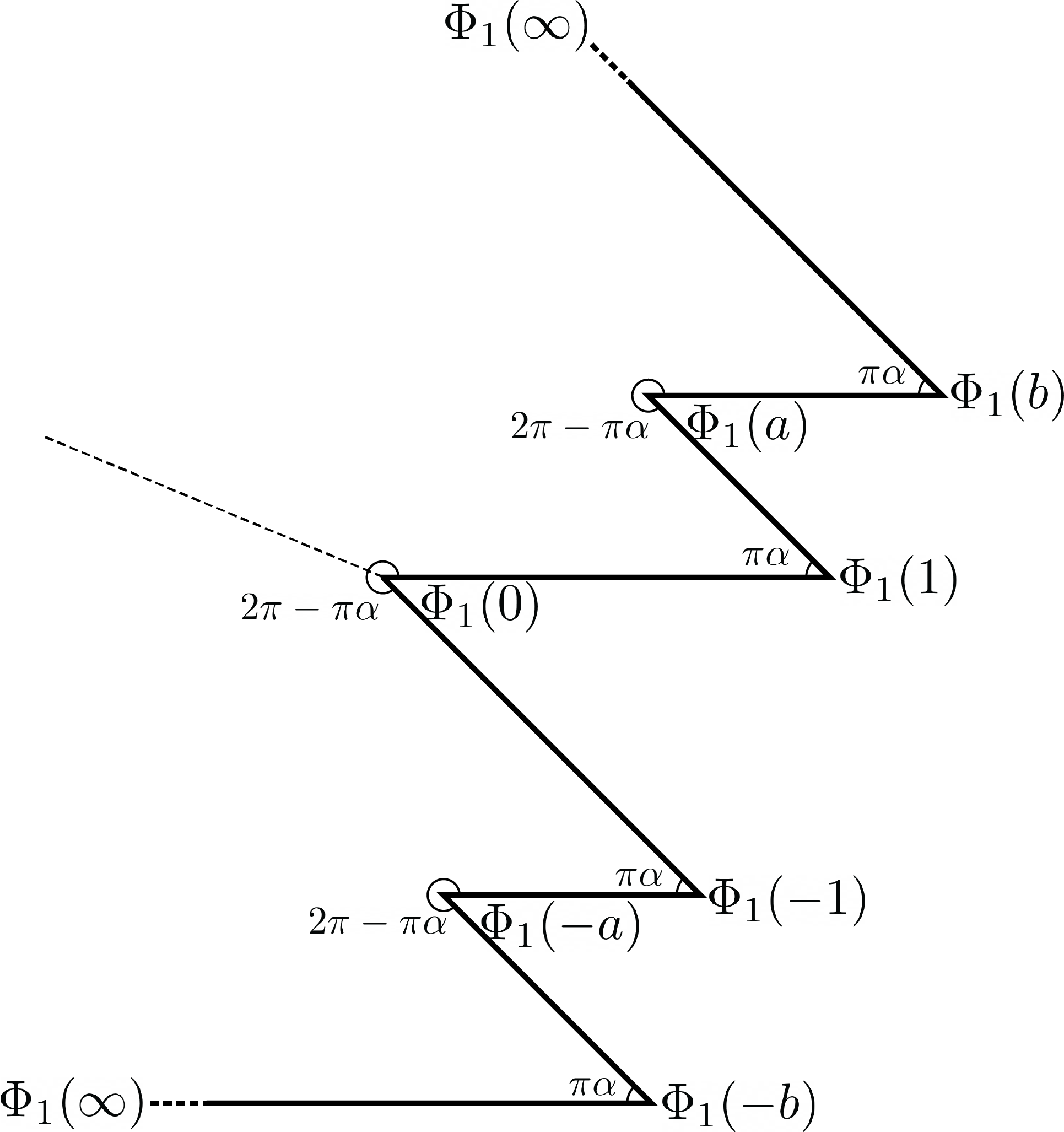}}
		\qquad\qquad\qquad
		\subfigure[$\frac{1}{G}dh$]{\includegraphics[width=2in]{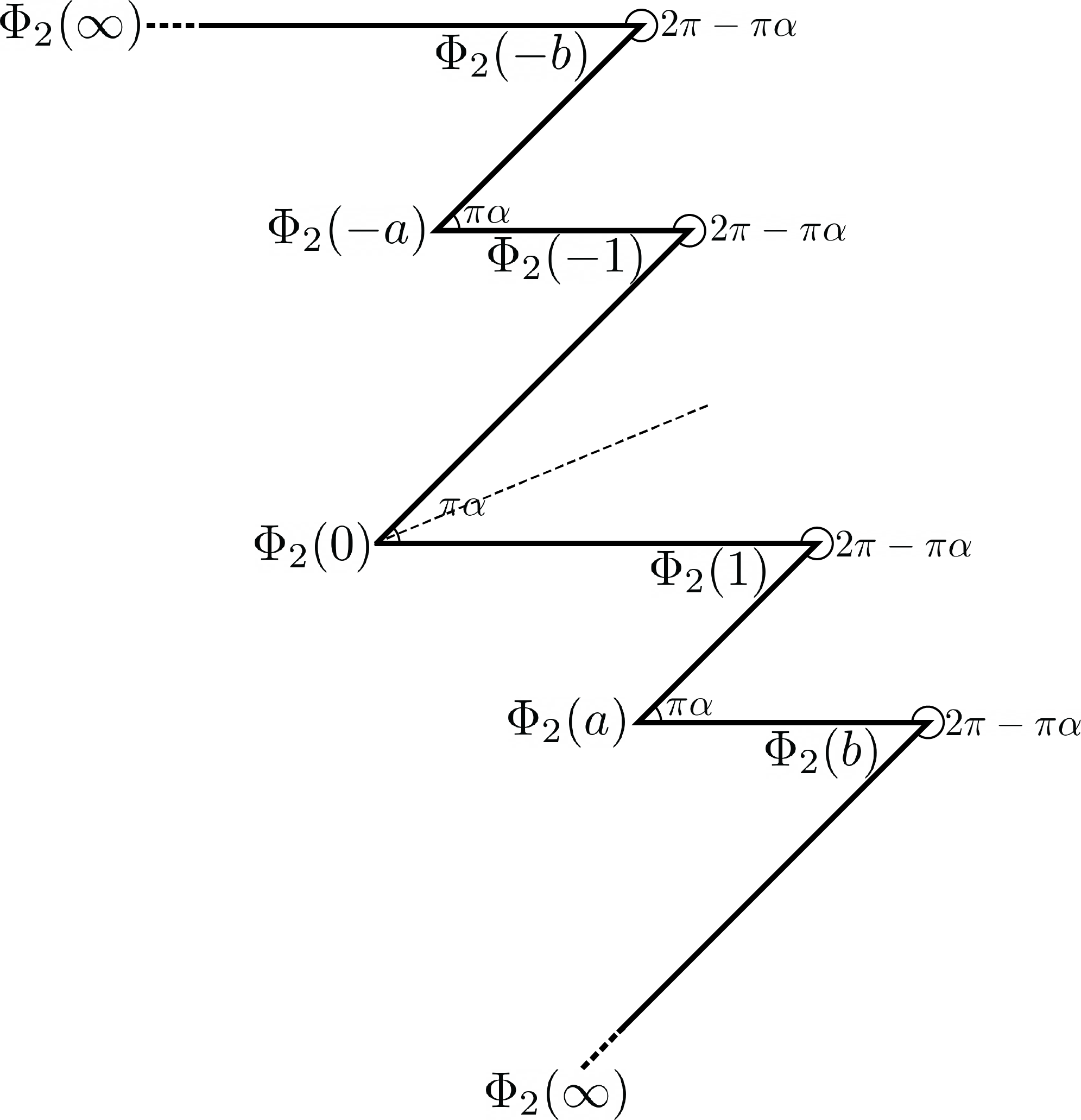}}
	\end{center}
	\caption{Flat Structures for $\alpha>0$}
	\label{fig:DEflat}

\end{figure}
The forms $\varphi_{1}$ and $\varphi_{2}$ are positive on $(0,1)$ which in turn determines our choice of rotations and branch cuts. Also note that the Schwarz-Christoffel maps with $\varphi_{1}$ and $\varphi_{2}$ for integrands, send the imaginary line to the symmetry axes of the polygons in Figure \ref{fig:DEflat}.
	
	Then we choose these 1-forms to be the Weierstrass 1-forms $Gdh=\rho\varphi_{1}$,  $\frac{1}{G}dh=\frac{1}{\rho}\varphi_{2}$ with a suitable L{\'o}pez-Ros factor $\rho$ which will be chosen later in our argument. As a result we get $dh=dz$, and clearly  $\Omega_3$ maps the imaginary and real lines to the imaginary and real lines, respectively.  Then by Proposition \ref{prop:symmetries}, $f$ will map the upper half plane to a minimal octagon as described in the statement of Lemma \ref{DEoctagon}.
	
	We note that, our construction of the Schwarz-Christoffel maps allows us to choose $\alpha=0$, for which we obtain the following 1-forms:
	\begin{equation*}
		Gdh=\rho
		\frac{z(a^2-z^2)}{(1-z^2)(b^2-z^2)}dz ,
		\qquad	
		\frac{1}{G}dh=
		\frac{(b^2-z^2)(1-z^2)}{\rho z(a^2-z^2)}dz.
	\end{equation*}
	Due to the assumption that $0<1<a<b$, we calculate the residues of $Gdh$, $\frac{1}{G}dh$ at the points $\{-b,-a,-1,0,1,a,b\}$ and see that $\phi_1$ maps the upper half plane to the polygon in Figure \ref{fig:DEflatlimit}(a) and $\phi_2$ maps the upper half plane to the polygon in Figure \ref{fig:DEflatlimit}(b).
	\begin{figure}[H]
		\begin{center}
			\subfigure[$Gdh$]{\includegraphics[width=2.5in]{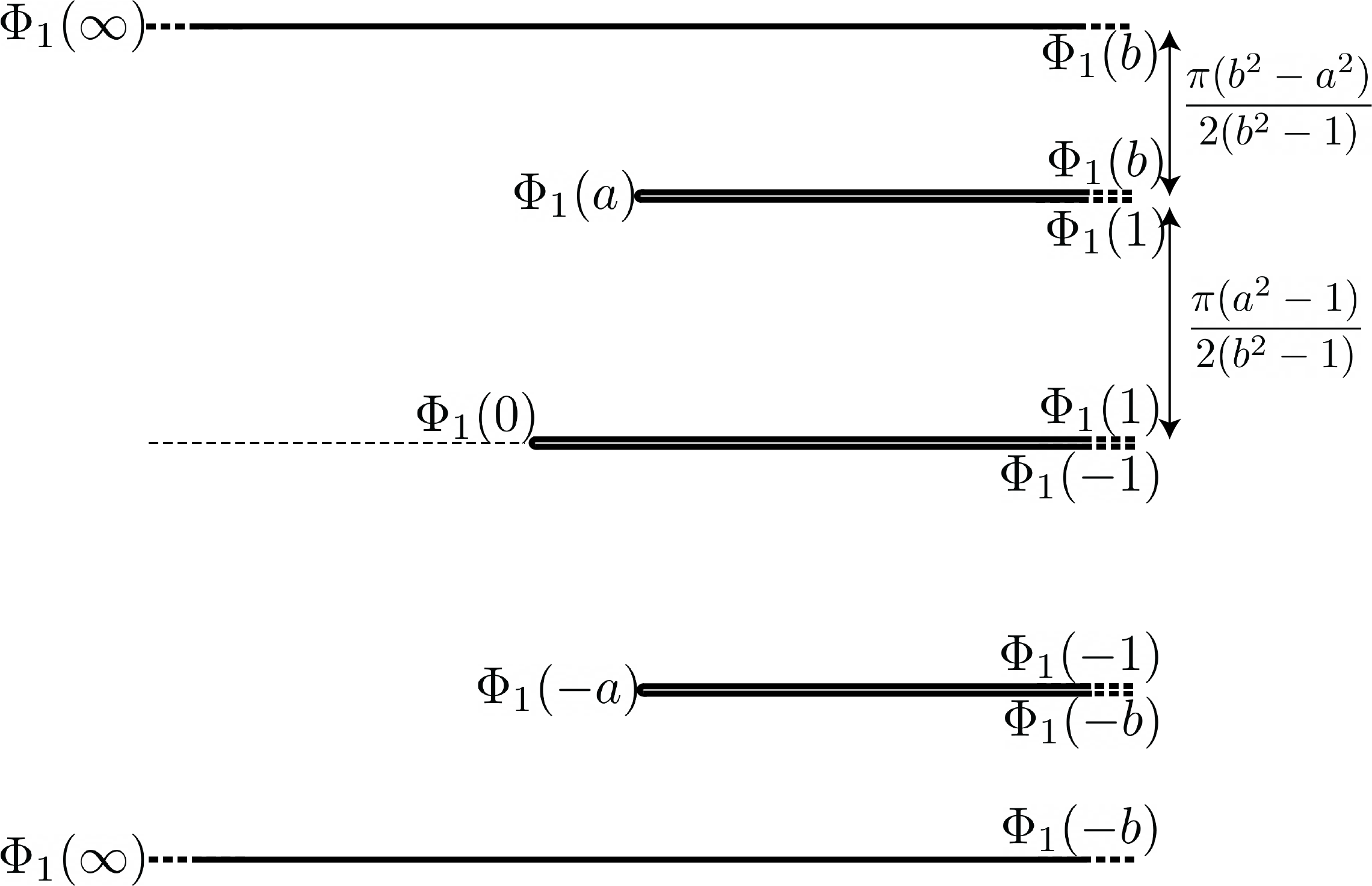}}
			\qquad\qquad
			\subfigure[$\frac{1}{G}dh$]{\includegraphics[width=2.5in]{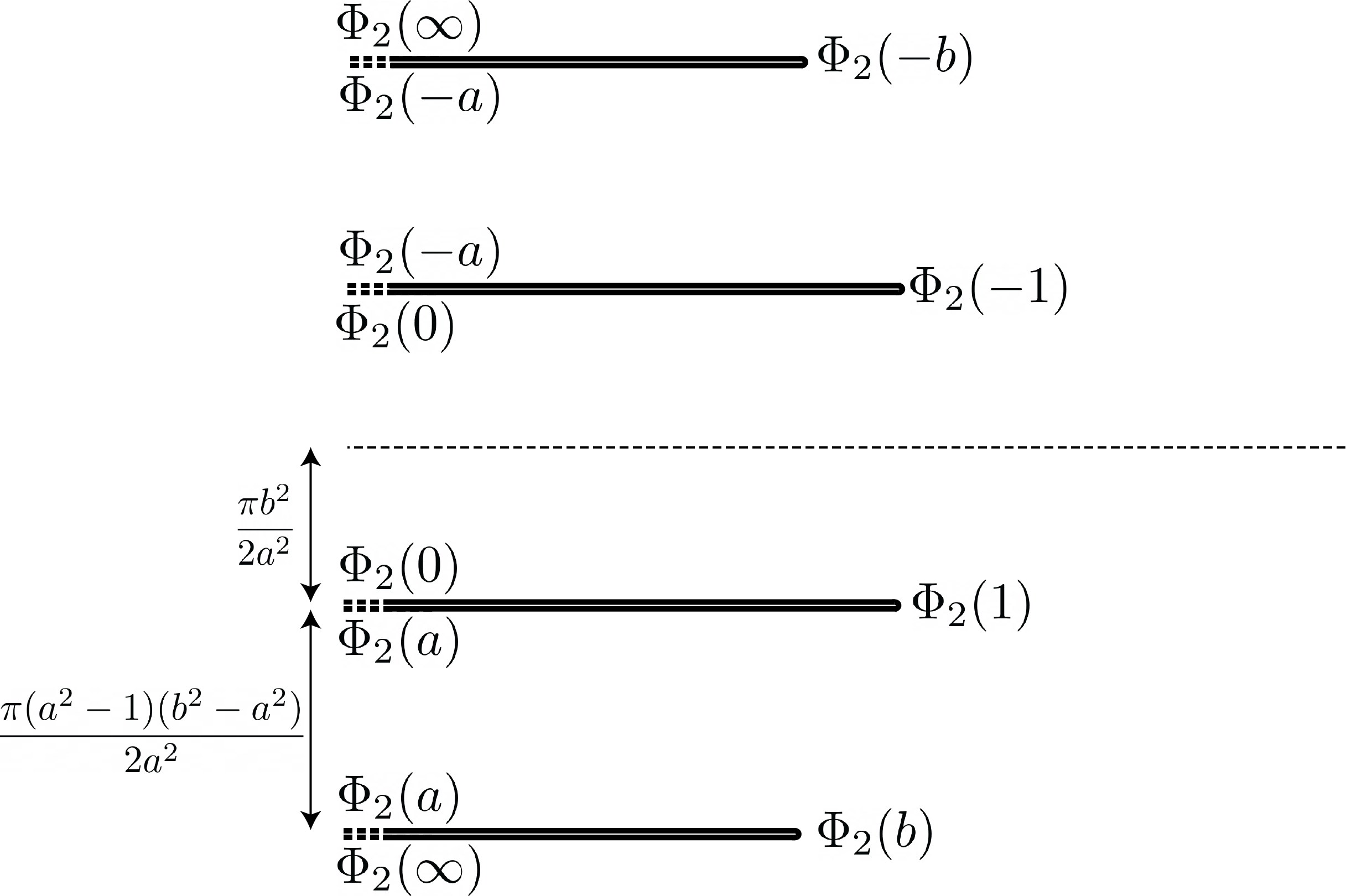}}
		\end{center}
	\caption{Flat structures when $\alpha=0$}
	\label{fig:DEflatlimit}

	\end{figure}
	Therefore, by Proposition \ref{prop:symmetries}, when $\alpha=0$, $f$ maps the upper half plane to a minimal octagon with all vertices at infinity and edges on parallel planes. \end{proof}
\subsection{Period Problem}\label{sec:DEperiod}
In order to extend the minimal octagon given by Lemma \ref{DEoctagon} to a complete wedge $W_{2\pi\alpha}$ corresponding to the $DE_{3,n}$ surface, it is sufficient and necessary that the alternating edges of the minimal octagon lie on the same vertical symmetry plane. We know that two alternating edges will lie on parallel planes and we can measure the distance between the parallel planes containing the two alternating edges as follows: Let $i<j$ be two consecutive numbers in $\{-b,-a,-1,0,1,a,b\}$, let $i_{p}$ be the immediate predecessor of  $i$ and $j_{s}$ be the immediate successor of $j$ in $\{-\infty,-b,-a,-1,0,1,a,b,\infty\}$. Also, let $\gamma_{i,j}$ be a circle in $\C$ that starts at a point in the interval $(j,j_{s})$, moves in the counterclockwise direction and passes through a point in the interval $(i_{p},i)$. That is, $\gamma_{i,j}$ encircles $i,j$ and $|f(i_{p})f(i)|$, $|f(j)f(j_{s})|$ are alternating edges of the minimal octagon. As in Proposition \ref{prop:symmetries}, we can extend i.e. analytically continue $G$ and $dh$ across $(i_{p},i)$. This corresponds to extending the minimal octagon by a vertical reflection with respect to the plane that contains the edge $|f(i_{p})f(i)|$. Then $\re {\int_{\gamma_{i,j}}(\omega_{1},\omega_{2})}$ is equal to the twice of the normal vector between the plane containing $|f(i_{p})f(i)|$ and the plane containing $|f(j)f(j_{s})|$. Since both $\varphi_{1}$ and $\varphi_{2}$ are symmetric with respect to the imaginary line, it is clear that if this normal vector is $0$ for $(i,j)=(1,a)$ and $(i,j=a,b)$, then it is $0$ simultaneously for$(i,j)=(-a,-1)$ and $(i,j)=(-b,-a)$. Hence for a given $\alpha \geq 0$, to be able to extend the minimal octagon by closing the period $\re {\int_{\gamma_{i,j}}(\omega_{1},\omega_{2})}$, we need values of $a,b,\rho$ such that 
\begin{equation}\label{DEperiod}
	\int_{\gamma_{0,1}}Gdh=\conj{\int_{\gamma_{0,1}}\frac{1}{G}dh}, \qquad \int_{\gamma_{1,a}}Gdh=\conj{\int_{\gamma_{1,a}}\frac{1}{G}dh}, \qquad\int_{\gamma_{a,b}}Gdh=\conj{\int_{\gamma_{a,b}}\frac{1}{G}dh}.
\end{equation}

So far, we have parametrized the surface $W_{2\pi \alpha}$ with 3 parameters $\rho,a,b$ and we have 3 period equations to solve. We can solve one these conditions, choose the L{\'o}pez-Ros factor $\rho$ and reduce the number of parameters essentially by scaling as follows:
\begin{lemma}\label{lem:DEperiods}
	Assume that for a given $\alpha \geq 0$, there exist $1<a<b$ such that the following holds true
\begin{equation}\label{eqn:DEperiod}
	\frac{\int_{\gamma_{0,1}}\varphi_{1}}{\int_{\gamma_{1,a}}\varphi_{1}}=\frac{\conj{\int_{\gamma_{0,1}}\varphi_{2}}}{\conj{\int_{\gamma_{1,a}}\varphi_{2}}}
	 \quad and \quad 	\frac{\int_{\gamma_{1,a}}\varphi_{1}}{\int_{\gamma_{a,b}}\varphi_{1}}=\frac{\conj{\int_{\gamma_{1,a}}\varphi_{2}}}{\conj{\int_{\gamma_{a,b}}\varphi_{2}}}
\end{equation}
where $\varphi_{1}$ and $\varphi_{2}$ are continued analytically along the circles $\gamma_{i,i+1}$. Then there exists a $\rho>0$ such that for $Gdh=\rho\varphi_{1}$, $\frac{1}{G}dh=\frac{1}{\rho}\varphi_{2}$, the period conditions given in \eqref{DEperiod} are satisfied.
\end{lemma}
\begin{proof}[Proof of Lemma \ref{lem:DEperiods}]
Note that $-e^{-i\alpha\pi}\int_{\gamma_{0,1}}\varphi_{1}$ and $-e^{-i\alpha\pi}\conj{\int_{\gamma_{0,1}}\varphi_{2}}$ are positive. \\
We define $\rho=\sqrt{\conj{\int_{\gamma_{0,1}}\varphi_{2}}\Big/ \int_{\gamma_{0,1}}\varphi_{1}}$. Observe that,
\begin{align*}
	\int_{\gamma_{0,1}}Gdh=\int_{\gamma_{0,1}}\rho\varphi_{1}=-e^{i\alpha\pi}\sqrt{\Big(\conj{\int_{\gamma_{0,1}}e^{-i\alpha\pi}\varphi_{2}}\Big)\Big( \int_{\gamma_{0,1}}e^{-i\alpha\pi}\varphi_{1}\Big)}&=\conj{\int_{\gamma_{0,1}}\frac{1}{\rho}\varphi_{2}}
	=\conj{\int_{\gamma_{0,1}}\frac{1}{G}dh}.
\end{align*}
Similarly,
\begin{equation*}
	\int_{\gamma_{1,a}}Gdh=\int_{\gamma_{1,a}}\rho\varphi_{1}=\sqrt{\frac{\conj{\int_{\gamma_{0,1}}\varphi_{2}}}{\int_{\gamma_{0,1}}\varphi_{1}}} \frac{\int_{\gamma_{0,1}}\varphi_{1}\conj{\int_{\gamma_{1}}\varphi_{2}}}{\conj{\int_{\gamma_{0,1}}\varphi_{2}}}=\conj{\int_{\gamma_{1,a}}\frac{1}{\rho}\varphi_{2}}=\conj{\int_{\gamma_{1,a}}\frac{1}{G}dh},
\end{equation*}
and finally,
\begin{align*}
	\int_{\gamma_{a,b}}Gdh=\int_{\gamma_{a,b}}\rho\varphi_{1}&=\sqrt{\frac{\conj{\int_{\gamma_{0,1}}\varphi_{2}}}{\int_{\gamma_{0,1}}\varphi_{1}}} \frac{\int_{\gamma_{1,a}}\varphi_{1}\conj{\int_{\gamma_{a,b}}\varphi_{2}}}{\conj{\int_{\gamma_{1,a}}\varphi_{2}}}\\
	&=\sqrt{\frac{\conj{\int_{\gamma_{0,1}}\varphi_{2}}}{\int_{\gamma_{0,1}}\varphi_{1}}} \frac{\int_{\gamma_{0,1}}\varphi_{1}\conj{\int_{\gamma_{a,b}}\varphi_{2}}}{\conj{\int_{\gamma_{0,1}}\varphi_{2}}}\\
	&=\conj{\int_{\gamma_{a,b}}\frac{1}{\rho}\varphi_{2}}=\conj{\int_{\gamma_{a,b}}\frac{1}{G}dh}.
\end{align*}
\end{proof}
Next, we will address the period problem and complete the proof of Theorem \ref{DEtheorem}.

\begin{lemma}\label{lem:DEwedge}
	For small enough $\alpha\geq0$, there exists a minimal wedge $W_{2\pi\alpha}$ corresponding to the $DE_{3,n}$ surface given in Theorem \ref{DEtheorem}.
\end{lemma}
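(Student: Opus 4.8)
The plan is to solve the period system \eqref{eqn:DEperiod} for small $\alpha$ by first solving it explicitly at $\alpha=0$ and then applying the implicit function theorem. By Lemma~\ref{DEperiods}, it suffices to find $1<a<b<\infty$ satisfying the two equations in \eqref{eqn:DEperiod}; the L\'opez--Ros factor $\rho$ and the overall scale are then determined automatically. So I would package the left-hand sides of \eqref{eqn:DEperiod} into a map
\begin{equation*}
F(\alpha,a,b)=\left(\frac{\int_{\gamma_{0,1}}\varphi_{1}}{\int_{\gamma_{1,a}}\varphi_{1}}-\frac{\conj{\int_{\gamma_{0,1}}\varphi_{2}}}{\conj{\int_{\gamma_{1,a}}\varphi_{2}}}\ ,\ \frac{\int_{\gamma_{1,a}}\varphi_{1}}{\int_{\gamma_{a,b}}\varphi_{1}}-\frac{\conj{\int_{\gamma_{1,a}}\varphi_{2}}}{\conj{\int_{\gamma_{a,b}}\varphi_{2}}}\right)\in\C^{2},
\end{equation*}
and note that the symmetry of $\varphi_1,\varphi_2$ under $z\mapsto -z$ together with the reality on $(0,1)$ forces these quantities to lie in a real two-dimensional locus, so that $F$ is effectively a map $(\alpha,a,b)\mapsto(\text{real},\text{real})\in\R^2$ and the count matches: two real unknowns $a,b$, two real equations.

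First I would analyze $\alpha=0$. Here $\varphi_1=\dfrac{z(a^2-z^2)}{(1-z^2)(b^2-z^2)}\,dz$ and $\varphi_2=\dfrac{(b^2-z^2)(1-z^2)}{z(a^2-z^2)}\,dz$ are rational, so each $\int_{\gamma_{i,j}}\varphi_k$ is $2\pi i$ times a sum of residues at the poles enclosed by $\gamma_{i,j}$, which are elementary rational functions of $a,b$. Thus the $\alpha=0$ period system becomes two explicit \emph{algebraic} equations in $a,b$; I would solve them (or at least show a solution $1<a_0<b_0<\infty$ exists, e.g.\ by an intermediate-value/degree argument on the two coordinate functions as $a,b$ range over the relevant region, using the monotone behaviour of the residue expressions). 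This is exactly the simplification advertised in the introduction: at $\alpha=0$ one avoids integrands with exponents $\alpha-1$.

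Next I would verify the hypotheses of the implicit function theorem at $(\alpha,a,b)=(0,a_0,b_0)$. The map $F$ is real-analytic in $(\alpha,a,b)$ on a neighbourhood of this point — the exponents $1\pm\alpha$ make the integrands $\varphi_1,\varphi_2$ holomorphic in $\alpha$ once branch cuts are fixed as in Lemma~\ref{DEoctagon}, and the contours $\gamma_{i,j}$ can be taken fixed and disjoint from the branch points, so differentiation under the integral sign is legitimate and uniform. The substantive point is the nondegeneracy of the $2\times2$ Jacobian $\partial F/\partial(a,b)$ at $\alpha=0$: this I would check by direct computation from the residue formulas, reducing it to showing a certain explicit rational expression in $a_0,b_0$ is nonzero (generically true, and checkable at the specific solution found above). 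Granting this, the implicit function theorem yields real-analytic functions $a(\alpha),b(\alpha)$ with $a(0)=a_0$, $b(0)=b_0$ solving \eqref{eqn:DEperiod} for all sufficiently small $\alpha\ge0$; feeding these into Lemma~\ref{DEperiods} produces the L\'opez--Ros factor $\rho(\alpha)$ and hence Weierstrass data $Gdh=\rho\varphi_1$, $\tfrac1G dh=\tfrac1\rho\varphi_2$ whose periods close, giving the desired minimal wedge $W_{2\pi\alpha}$ by the reflection argument of Proposition~\ref{prop:symmetries} and Lemma~\ref{DEoctagon}.

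The main obstacle I anticipate is the Jacobian nondegeneracy at $\alpha=0$: one must actually locate the solution $(a_0,b_0)$ well enough (or understand the geometry of the two residue curves in the $(a,b)$-square $\{1<a<b\}$ precisely enough) to certify that they cross transversally rather than tangentially. A secondary but routine concern is bookkeeping the branch-cut conventions so that the analytic continuations of $\varphi_1,\varphi_2$ along the $\gamma_{i,j}$ depend continuously on $\alpha$ through $\alpha=0$ — where the finite vertices of the polygons run off to infinity — which is precisely the ``dramatic change of edges'' flagged in the introduction; I would handle this by working throughout with the contour integrals $\int_{\gamma_{i,j}}$ (which stay finite and depend analytically on $\alpha$) rather than with the segment integrals that degenerate.
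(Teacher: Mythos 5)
Your proposal is correct and matches the paper's approach: compute the period map at $\alpha=0$ via residues, find an explicit solution, and invoke the implicit function theorem in the contour-integral formulation that stays analytic through $\alpha=0$. The paper resolves your anticipated obstacle by exhibiting the explicit root $(a_0,b_0)=(\sqrt{3+\sqrt{6}},\sqrt{5+2\sqrt{6}})$ and recording that the Jacobian equals $\tfrac{8ab(b^2+1)}{(a^2-1)(b^2-a^2)^3}$, which is manifestly nonzero for $1<a<b$, so transversality is automatic once the residue computation is carried out.
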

\begin{proof}
	We define the map
	\begin{equation}
		P(a,b,\alpha):=\Bigg\{ 		\frac{\int_{\gamma_{0,1}}\varphi_{1}}{\int_{\gamma_{1,a}}\varphi_{1}}-\frac{\conj{\int_{\gamma_{0,1}}\varphi_{2}}}{\conj{\int_{\gamma_{1,a}}\varphi_{2}}}
		,			\frac{\int_{\gamma_{1,a}}\varphi_{1}}{\int_{\gamma_{a,b}}\varphi_{1}}-\frac{\conj{\int_{\gamma_{1,a}}\varphi_{2}}}{\conj{\int_{\gamma_{a,b}}\varphi_{2}}}
		\Bigg\}.
	\end{equation}	

Note that $P(a,b,0)$ can be expressed, using the residue theorem, in the following way:
	\begin{align*}
		&=\Bigg\{
		\frac{\Res(\varphi_{1},0)+\Res(\varphi_{1},1)}{\Res(\varphi_{1},1)+\Res(\varphi_{1},a)}-
		\frac{\conj{\Res(\varphi_{2},0)+\Res(\varphi_{2},1)}}{\conj{Res(\varphi_{2},1)+\Res(\varphi_{2},a)}},\\
		&\hspace{1in} \frac{\Res(\varphi_{1},1)+\Res(\varphi_{1},a)}{\Res(\varphi_{1},a)+\Res(\varphi_{1},b)}-
		\frac{\conj{\Res(\varphi_{2},1)+\Res(\varphi_{2},a)}}{\conj{Res(\varphi_{2},a)+\Res(\varphi_{2},b)}}
		\Bigg\}\\
		&=\Big\{
		1+\frac{2b^2}{(a^2-1)(a^2-b^2)},-1+\frac{a^2-1}{b^2-a^2}
		\Big\}.
	\end{align*}
	Moreover, we have that $P(\sqrt{3+\sqrt{6}},\sqrt{5+2\sqrt{6}},0)=0$. In other words, we can solve the period problem when $\alpha=0$. We can now apply the Implicit Function Theorem to the differentiable function $P$ at $\alpha=0$. We exclude the computations that show that the Jacobian of $P$ at $\alpha=0$ is equal to $\frac{8ab(b^2+1)}{(a^2-1)(b^2-a^2)^3}$, since it can be found using standard arguments. Since it is clearly non-zero, we conclude that for every $\alpha$ in a neighborhood of $0$, we can indeed the solve the period conditions given in \eqref{DEperiod} for some $1<a<b$.
\end{proof}
\subsection{Extension and Types of Ends}\label{sec:type of ends}
 In this section we discuss the extension of $W_{2\pi\alpha}$ to $DE_{3,n}$ and the properties of the end point of $DE_{3,n}$. 
 
Observe that for an integer $n\geq2$ and $\alpha=\frac{1}{n}$, we can extend the minimal surface $W_{2\pi\alpha}$ by reflections to $DE_{3,n}$. Note that the $\frac{1}{2n}$th of the developed image of the extended $Gdh$ and $\frac{1}{G}dh$ in $\C$ are the octagons in Figure \ref{fig:DEflat}. We observe that $\phi_1(\gamma_{\infty})$ and $\phi_2(\gamma_{\infty})$ have winding numbers $-\frac{\alpha}{2}$ and $-(1-\frac{\alpha}{2})$ respectively, where $\gamma_{\infty}$ is a half circle around $\infty$ in the the upper half plane. Thus for the extended Weierstrass data, images of a circle  under the maps $\int Gdh$ and$\int \frac{1}{G}dh$ have winding numbers $-1$ and $(1-\frac{2}{\alpha})$. Hence $Gdh$ and $\frac{1}{G}dh$ have poles of order $2$ and $2n$, respectively. Consequently, $G$ has a zero of order $n-1$ and $dh$ has pole of order $n+1$ around the end point of the extended surface.  This implies that the extended surface has an enneper type end. 

On the other hand, when $\alpha=0$, after reflecting the minimal octagon once, we obtain a singly periodic surface which is defined on $\C$ by
\begin{equation*}
	G=
	\rho\frac{z(a^2-z^2)}{(1-z^2)(b^2-z^2)}dz, 
	\qquad	
	dh=dz.
\end{equation*}
We see that $G$ has simple zeros at $-a,0,a,\infty$ and simple poles at $-b,-1,1,b$ whereas $dh$ only has a second order pole at $\infty$. This implies that the dihedral limit $W_0$ of $DE_{3,n}$ has Scherk type ends at the points $-b,-a,-1,0,1,a,b$ and an enneper type end at $\infty$.

Now, for any sufficiently large $n\in\N$,  we use Lemma \ref{lem:DEwedge} to solve the period conditions for the minimal wedge $W_{2\pi\alpha}$ with $\alpha=\frac{1}{n}$. Then the extended surface $DE_{3,n}$ is the Chen-Gackstatter Surface of genus $3(n-1)$ with dihedral symmetry as claimed in Theorem \ref{DEtheorem}. With this we conclude the proof of Theorem \ref{DEtheorem}.

\end{proof}


\section{Dihedralized Weber Wolf Surface}\label{sec:DWW}
In this section we will give an alternative existence proof of Weber-Wolf Surface with catenoidal ends, $DH_{1,1}$ with $n$-fold dihedral symmetry and with 4 catenoidal ends. These surfaces, along with many of their variants, were first introduced in \cite{ww2} by using Teichmuller theoretical methods to construct. Our proof provides the existence of a specific case mentioned above in a brief manner.
\begin{theorem}\label{DWWtheorem}
	For sufficiently large values of $n\in \N$, there exists a finite type $n$-dihedral minimal surface $DH_{1,1,n}$ of genus $3n-3$ with four catenoidal ends and with a planar end which is invariant under $180^\circ$ horizontal rotations. Moreover, as $n \rightarrow \infty$, $DH_{1,1,n}$ converges to a singly periodic minimal surface of genus 0 with 8 annular ends, the dihedral limit surface is invariant under vertical translations and $180^\circ$ rotations around two horizontal lines.       
\end{theorem}
\begin{figure}[H]
	\begin{center}
		\includegraphics[width=6cm]{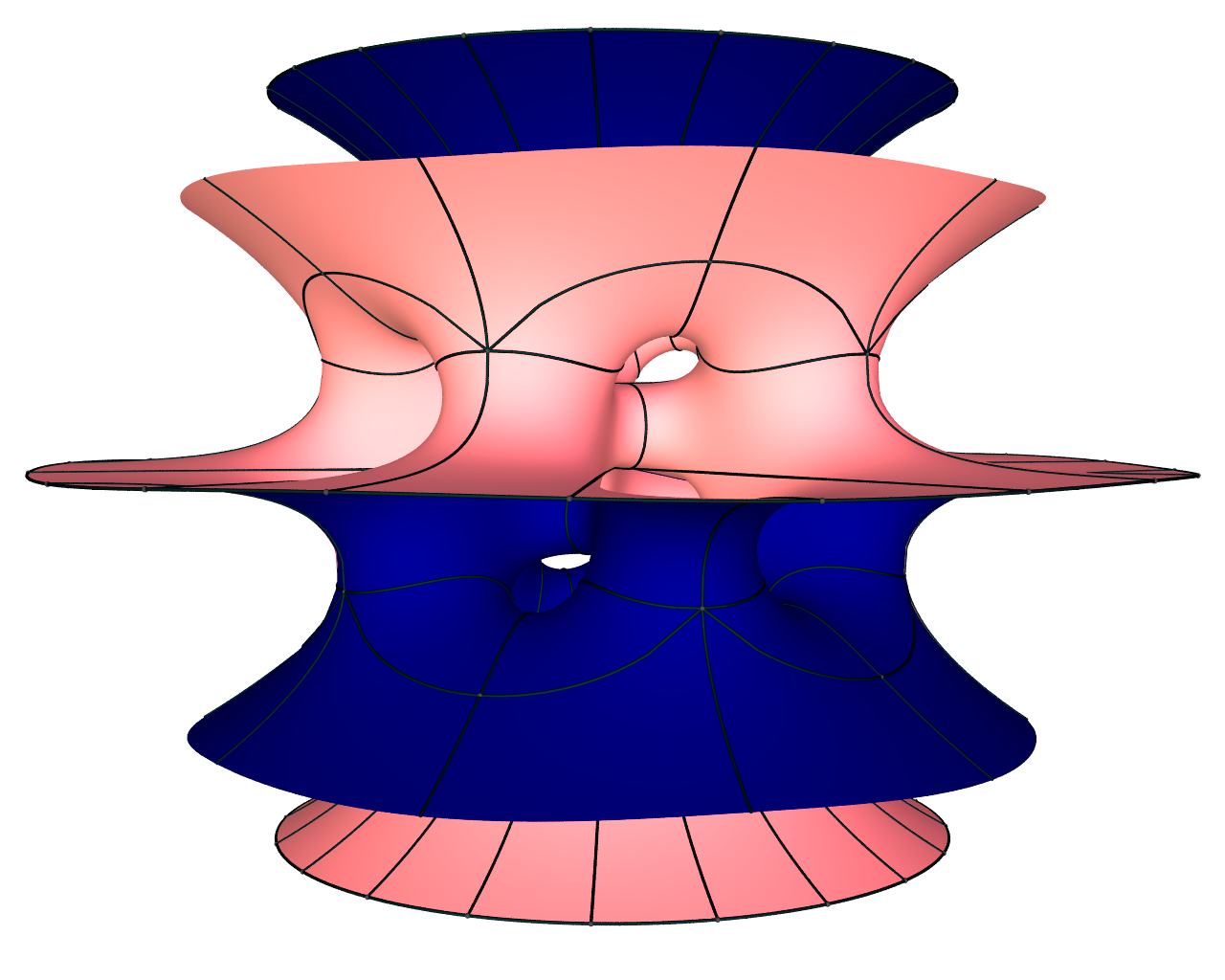}
	\end{center}
	\caption{Dihedralized Weber-Wolf Surface, $\alpha=\sfrac{1}{5}$}
	\label{fig:DWW}	
\end{figure}
\begin{proof}
	As in the case with $DE_{3,n}$, the quotient of $DH_{1,1,n}$ under rotational symmetry group is conformally equal to $\hat{\C}$. Hence the proof structure is identical to proof Theorem \ref{DEtheorem}. First, we construct a half of the wedge $W_{2\pi\alpha}$ corresponding to  $DH_{1,1,n}$ in the following lemma. 
\begin{lemma}\label{lem:DWWoctagon}
	For any $0<\rho$, $0\leq\alpha$ and $0<1<a<b<c<\infty$, there is a Weierstrass map $f$, cf. Section \ref{sec:geom}, that maps the upper half plane to a minimal octagon with consecutive 5 vertices at $\infty$ in $\R^3$. The edges of this octagon lie in vertical symmetry planes either parallel to the $x$-axis or make an angle $\alpha\pi$ with the $x$-axis alternatingly (if $\alpha=0$, then all the symmetry planes are parallel to the $x$-axis and all the vertices of the minimal octagons are at $\infty$). Moreover, there is a half straight line on this octagon that passes from two of the vertices and divides the octogon into two symmetrical pieces.  
\end{lemma}
\begin{figure}[H]
	\begin{center}
		\includegraphics[width=6cm]{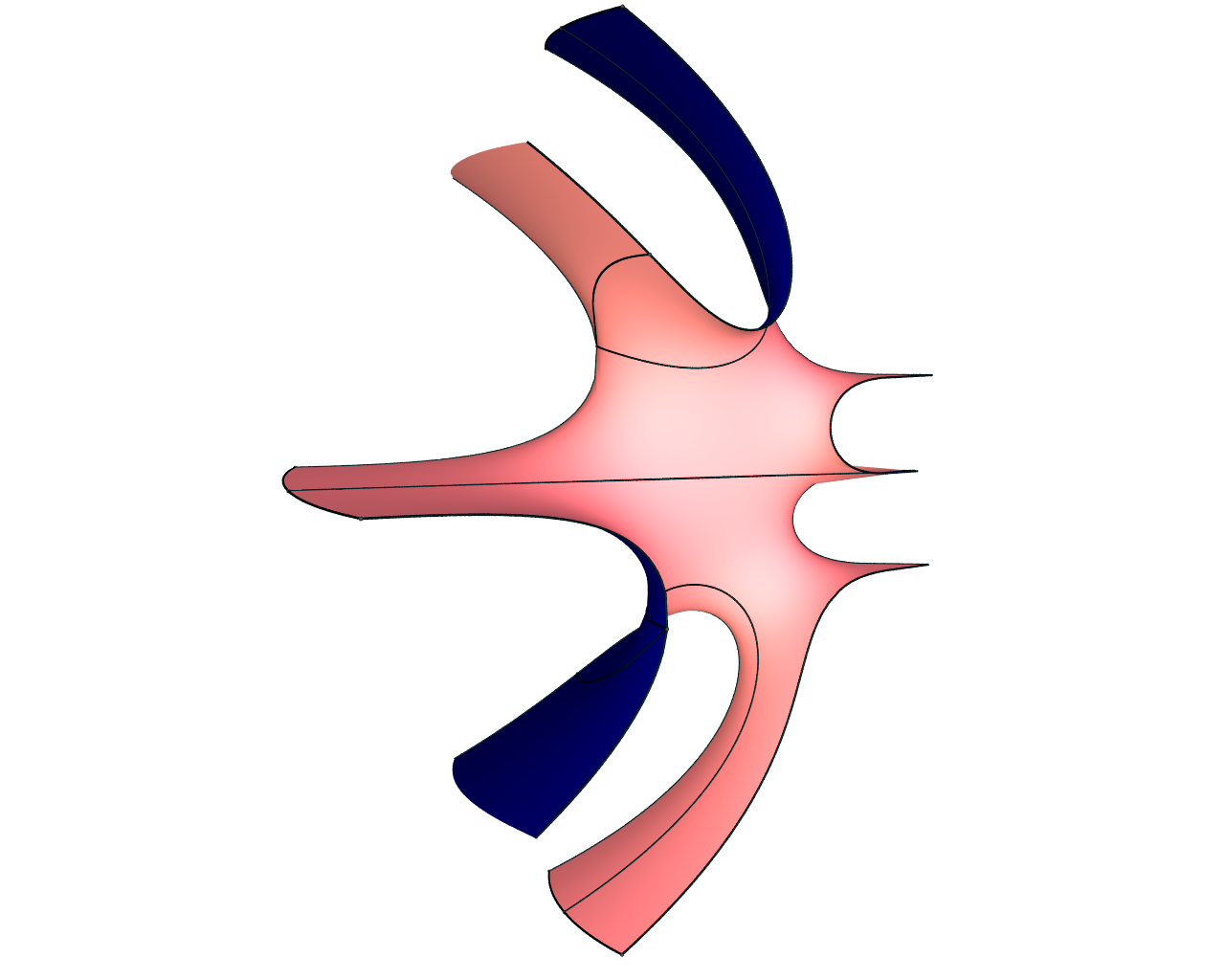}
	\end{center}
	\caption{A minimal octagon in $\R^3$ corresponding to $DH_{1,1,n}$}
	\label{fig:DWWoctagon}
\end{figure}2
	\begin{proof}
		As in the previous case, we begin the proof by constructing Schwarz-Christoffel maps to symmetric the polygons in Figure \ref{fig:DWWflat} where the positive imaginary line in $\C$ is mapped to the symmetry axes given by dashed lines in the figures. Define,
		\begin{equation}\begin{split}\label{DWWintegrands}
				\varphi_{1}=
				z^{\alpha-1}(1-z^2)^{1-\alpha}(a^2-z^2)^{\alpha-1}(c^2-z^2)^{-1-\alpha}dz, \\
				\varphi_{2}=
				z^{1-\alpha}(1-z^2)^{\alpha-1}(a^2-z^2)^{-1-\alpha}(b^2-z^2)^2(c^2-z^2)^{\alpha-1}dz.
		\end{split}\end{equation}
		The Schwarz-Christoffel map, with integrand $\varphi_1$ defined in \eqref{DWWintegrands}, sends the upper half plane to the right side of the polygon in Figure \ref{fig:DWWflat}(a). Similarly, the Schwarz-Christoffel map, with integrand $\varphi_2$ defined in \eqref{DWWintegrands}, sends the upper half plane to the left side of the polygon in Figure \ref{fig:DWWflat}(b). Thus, we choose the Weierstrass $1$-forms given by $Gdh=\rho\varphi_{1}$, $\frac{1}{G}dh=\frac{1}{\rho}\varphi_{2}$ and a suitable $\rho$ will be chosen in the following arguments.

		\begin{figure}[H]
			\begin{center}
				\subfigure[$Gdh$]{\includegraphics[width=2.5in]{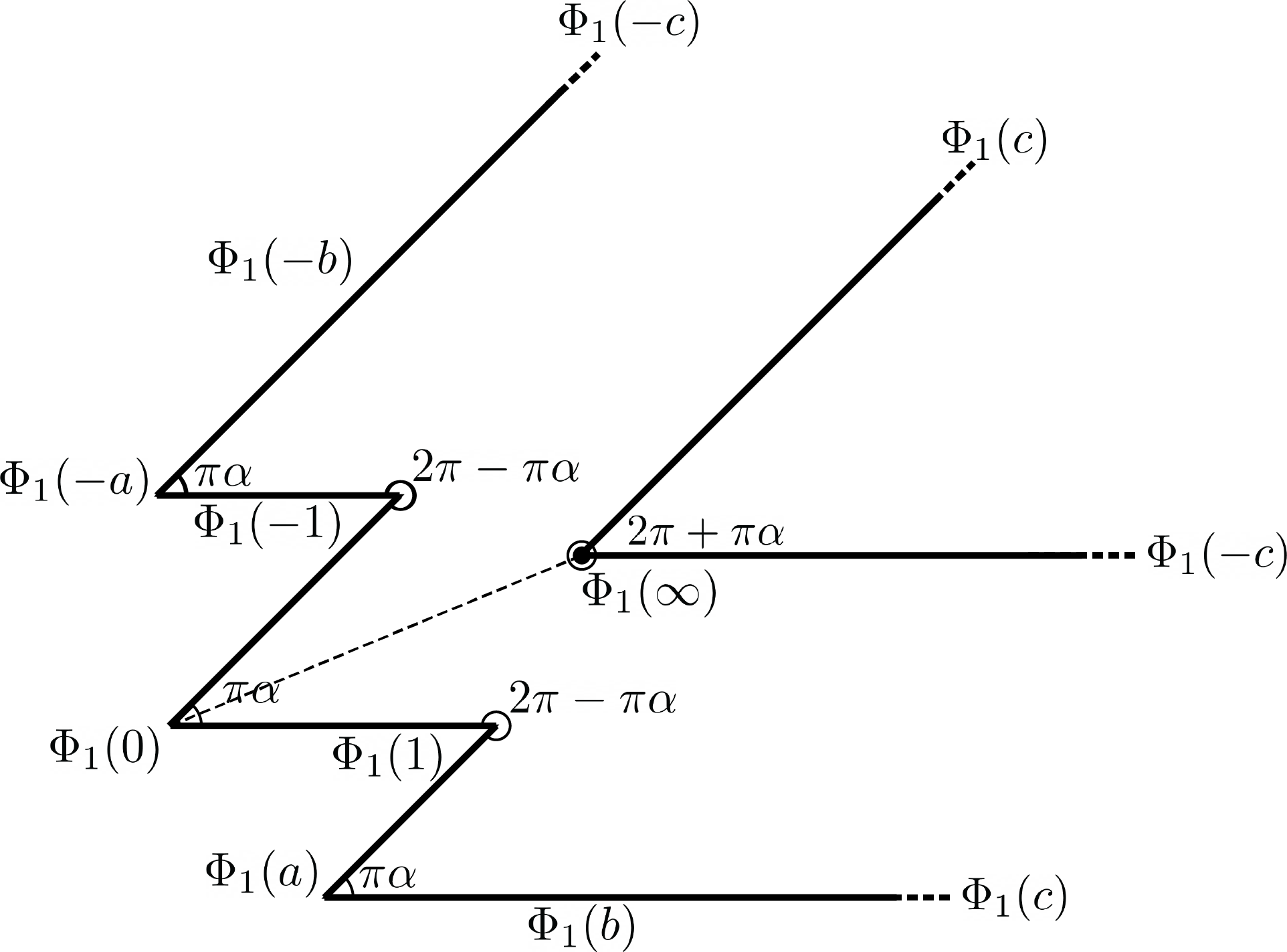}}
				\qquad\qquad\qquad
				\subfigure[$\frac{1}{G}dh$]{\includegraphics[width=2.5in]{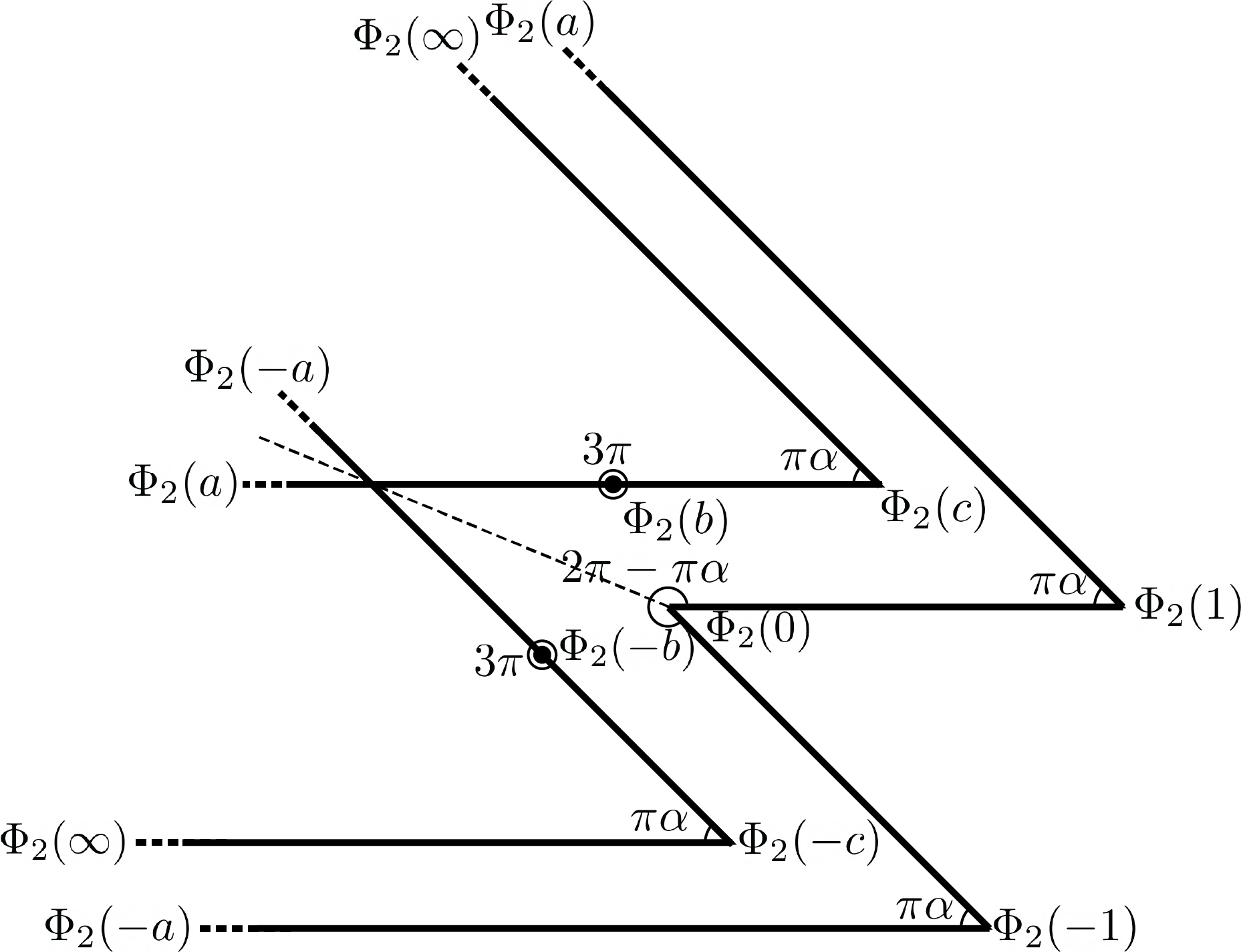}}
			\end{center}
			\caption{$DWW_n$ Flat Structures}
			\label{fig:DWWflat}

		\end{figure}

	Hence, the Gauss map and the height differential are given by
		\begin{align*}
			G&=\rho
			z^{\alpha-1}
			(1-z^2)^{1-\alpha}
			(a^2-z^2)^{\alpha}
			(b^2-z^2)^{-1}
			(c^2-z^2)^{-\alpha},\\
			dh&=\frac{(b^2-z^2)}{(a^2-z^2)(c^2+z^2)}dz.
		\end{align*}
		Observe that $dh$ assumes real values on the real line. As a consequence of Proposition \ref{prop:symmetries}(2), we see that the Weierstrass map $f$ maps the real line to vertical symmetry curves. Thus Proposition \ref{prop:symmetries}(3) implies that $f$ will map the upper half place to a minimal octagon as described in the statement of Lemma \ref{lem:DWWoctagon}.
		
		Then we finish the proof of Lemma \ref{lem:DWWoctagon} by using our symmetry Proposition \ref{prop:symmetries} again and by describing the limits of the Schwarz-Christoffel maps $\Phi_i$ and the limits of polygons given in Figure \ref{fig:DWWflat}, as $\alpha$ goes to $0$. The limits of the Schwarz-Christoffel maps are given by
		\begin{equation*}
			Gdh=
			\frac{1-z^2}{z(c^2-z^2)(a^2-z^2)}dz ,
			\qquad	
			\frac{1}{G}dh=
			\frac{z(b^2-z^2)^2}{(1-z^2)(c^2-z^2)(a^2-z^2)}dz.
		\end{equation*}
	Which map the upper half plane to the polygons
		\begin{figure}[H]
			\begin{center}
				\subfigure[$Gdh$]{\includegraphics[width=2.8in]{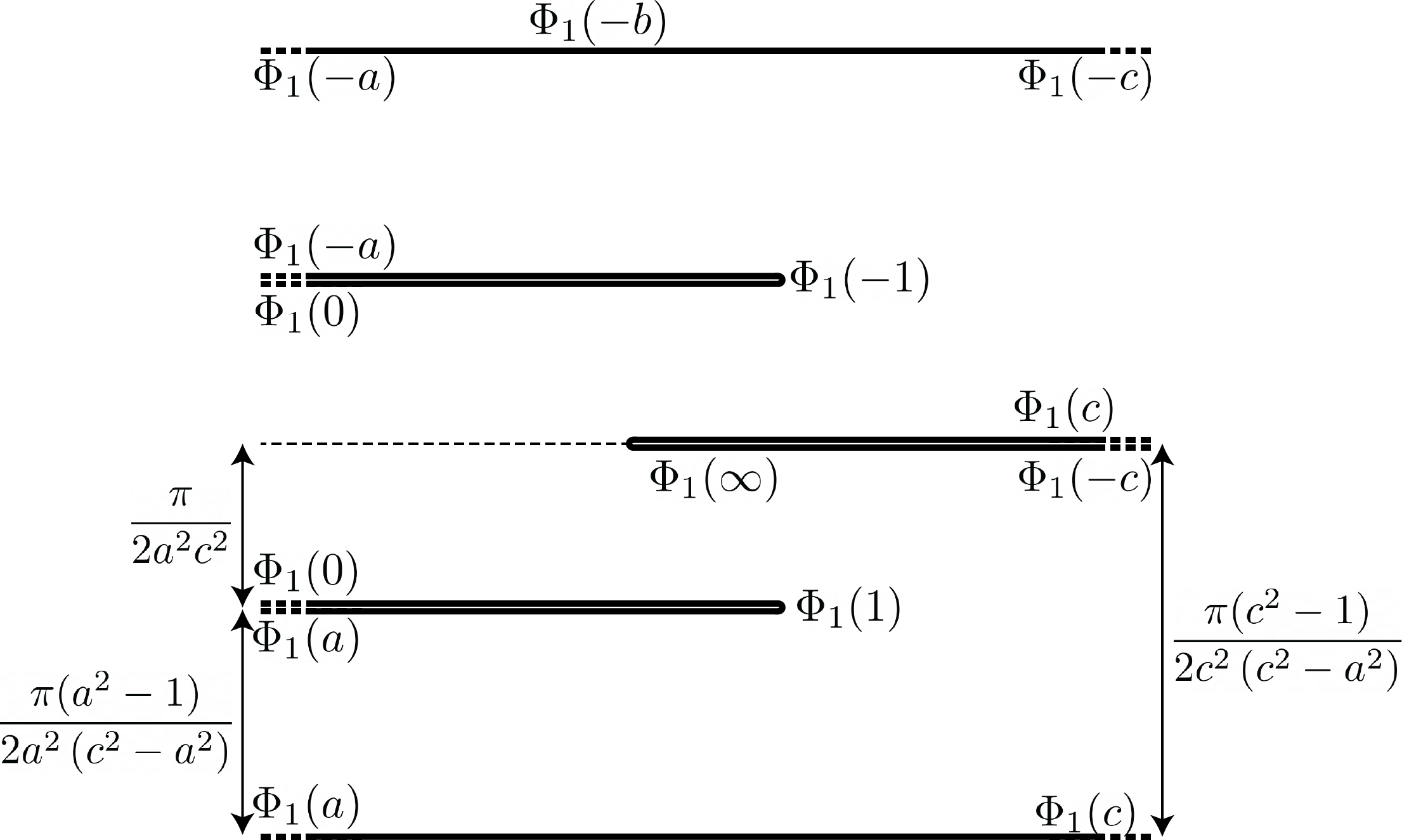}}
				\subfigure[$\frac{1}{G}dh$]{\includegraphics[width=3.2in]{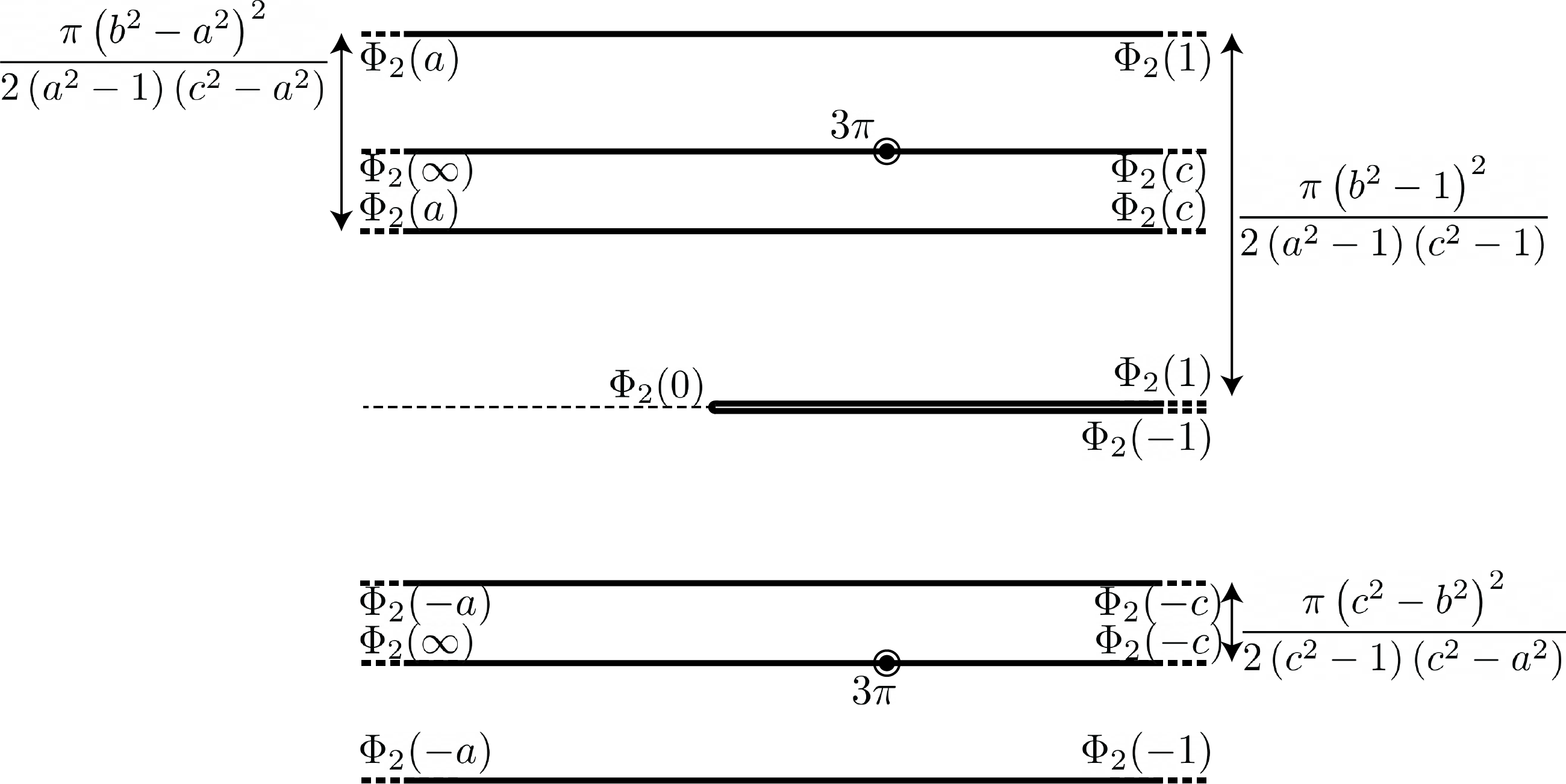}}
			\end{center}
			\caption{$DW_n$ flat structures as $\alpha \rightarrow 0$}\label{fig:DWWlimitflat}	
		\end{figure}
	\end{proof}

	\subsection{Period Problem}	\label{DCWperiodproblem}
	In order to extend the minimal octagon, obtained in Lemma \ref{lem:DCCWhexagon}, to a minimal wedge $W_{2\pi\alpha}$, we need to show that the sides of the octagon in Figure \ref{fig:DWWoctagon} that make the same angle with the $x$-axis, lie on the same plane. Similar to the extension of the half wedge corresponding to the $DE_{3,n}$ surface found in Section \ref{sec:DEperiod} and Lemma \ref{lem:DEperiods}, this can be achieved by satisfying the condition 
\begin{equation}\label{eqn:DWW
		period}
	\frac{\int_{\gamma_{0,1}}\varphi_{1}}{\int_{\gamma_{1,a}}\varphi_{1}}=\frac{\conj{\int_{\gamma_{0,1}}\varphi_{2}}}{\conj{\int_{\gamma_{1,a}}\varphi_{2}}}
	\quad and \quad 	\frac{\int_{\gamma_{1,a}}\varphi_{1}}{\int_{\gamma_{a,c}}\varphi_{1}}=\frac{\conj{\int_{\gamma_{1,a}}\varphi_{2}}}{\conj{\int_{\gamma_{a,c}}\varphi_{2}}}
\end{equation}	
	where $\gamma_{i,j}$ is as described in Section \ref{sec:DEperiod}. Note that for $\alpha=0$, the conditions in \eqref{DCCWperiod} are also valid for the limiting octagon. 

\begin{lemma}\label{lem:DWWwedge}
	For small enough $\alpha\geq0$, there exists a minimal wedge $W_{2\pi\alpha}$ corresponding to the $DH_{1,1,n}$ surface given in Theorem \ref{DWWtheorem}.
\end{lemma}
\begin{proof}
	We define the map
	\begin{equation}
		P(a,b,c,\alpha):=\Bigg\{ 		\frac{\int_{\gamma_{0,1}}\varphi_{1}}{\int_{\gamma_{1,a}}\varphi_{1}}-\frac{\conj{\int_{\gamma_{0,1}}\varphi_{2}}}{\conj{\int_{\gamma_{1,a}}\varphi_{2}}}
		,			\frac{\int_{\gamma_{1,a}}\varphi_{1}}{\int_{\gamma_{a,c}}\varphi_{1}}-\frac{\conj{\int_{\gamma_{1,a}}\varphi_{2}}}{\conj{\int_{\gamma_{a,c}}\varphi_{2}}}
		\Bigg\}.
	\end{equation}	
	
	The first coordinate of $P(a,b,0)$, by the residue theorem, is equal to
	\begin{equation*}
		\frac{\int_{\gamma_{0,1}}\varphi_{1}}{\int_{\gamma_{1,a}}\varphi_{1}}-\frac{\conj{\int_{\gamma_{0,1}}\varphi_{2}}}{\conj{\int_{\gamma_{1,a}}\varphi_{2}}}=\frac{\left(a^2-c^2\right) \left(2 a^2 \left(c^2-1\right)+b^4 \left(c^2+2\right)-6 b^2 c^2+3 c^2\right)}{\left(a^2-1\right) c^2 \left(a^2 \left(c^2-1\right)+b^4-2 b^2 c^2+c^2\right)},	
	\end{equation*}
	where is the second coordinate is equal to
	\begin{equation*}
		\frac{\int_{\gamma_{1,a}}\varphi_{1}}{\int_{\gamma_{a,c}}\varphi_{1}}-\frac{\conj{\int_{\gamma_{1,a}}\varphi_{2}}}{\conj{\int_{\gamma_{a,c}}\varphi_{2}}}=-\frac{\left(a^2-1\right) \left(a^2 \left(c^2-1\right)^2-\left(b^4 \left(c^2+1\right)\right)+4 b^2 c^2-c^2 \left(c^2+1\right)\right)}{\left(a^2-c^2\right) \left(a^2 \left(c^2-1\right)-b^4+2 b^2-c^2\right)}.
	\end{equation*}
In particular for $a=\frac{1}{5} \left(\left(2 \sqrt{6}+5\right)^{3/2}-8 \sqrt{2 \sqrt{6}+5}\right)$, $b=\sqrt{\frac{1}{5} \left(4 \sqrt{6}+9\right)}$ and $c=\frac{b^2}{a}$ we have $P(a,b,0)=0$ and Jacobian of P at $\alpha=0$ is nonzero. Therefore, we can extend the solution for $\alpha\geq0$ by Implicit Function Theorem where all the solution triples $(a,b,c)$ are of the form $(a,b,\frac{b^2}{a})$.
\end{proof}

\subsection{Extension, Types of Ends and Growth rates}	
Lemma \ref{lem:DWWwedge} shows that, for large enough n the period problem of $DH_{1,1,n}$ can be solved for $c=\frac{b^2}{a}$. By the winding number argument in Section \ref{sec:type of ends}, one can show that that the ends of $DH_{1,1,n}$ are catenoidal. In particular, the growth rate of catenoidal end around $a$ is $\frac{b^2-a^2}{2a(a^2-c^2)}$ and that around $c$ is $\frac{c^2-b^2}{2c(a^2-c^2)}$. Moreover, at $c=\frac{b^2}{a}$ the two growth rates are equal. That is to say that outside of a compact cylinder in $\R^3$, $DH_{1,1,n}$ looks like a union of two asymptotic concentric catenoids, and a horizontal plane that divides these catenoids into 2 symmetric pieces on their neck. On the other hand at $\alpha=0$, all the ends are replaced by Scherk type ends which are parallel to each other. Equal growth condition motivated the author to look for solutions in form $(a,b,\frac{b^2}{a})$. This concludes the proof of Theorem \ref{DWWtheorem}.
\end{proof}


\section{Dihedralized Catenoidal Costa-Wohlgemuth Surface}\label{sec:DCCW}

Having outlined the dihedralization argument in Sections \ref{sec:DE} and \ref{sec:DWW}, we shift our focus to proving the existence of two novel minimal surfaces, thereby supporting the merit of our method. While the proof of existence for the first surface closely aligns with previous examples, the latter necessitates additional arguments, given that we cannot employ the Residue Theorem in the limit case, as done in previous instances. We now discuss the proof of Theorem \ref{DCCWtheorem}.
\begin{proof}[Proof of Theorem \ref{DCCWtheorem}]
Similar to the situation with $DE_{3,n}$ and $DH_{1,1,n}$, the conformal equivalence of the quotient of $DCCW_n$ under rotational symmetry group is to $\hat{\C}$. Hence will construct $DCCW_n$ in the same way on the upper half plane by using the Schwarz-Christoffel maps. First, we construct a half of the wedge $W_{2\pi\alpha}$ corresponding to  $DCCW_n$ in the following lemma. 

\begin{lemma}\label{lem:DCCWhexagon}
	For any $\rho>0$, $\alpha\geq 0$ and $0<1<a<b<c<\infty$, there is a Weierstrass map $f$ that maps the upper half plane to a minimal hexagon with 4 vertices at $\infty$ in $\R^3$. The edges of this hexagon lie in vertical symmetry planes that are either parallel to or make an angle $\alpha\pi$ with the $x$-axis alternatingly (if $\alpha=0$, then all the symmetry planes are parallel to the $x$-axis and all the vertices of the minimal hexagons are at $\infty$ ). Moreover, this polygon is invariant under reflection with respect to a horizontal plane.  
\end{lemma}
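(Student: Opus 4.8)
The plan is to imitate the construction in the proof of Lemma~\ref{DEoctagon}. Suppose first $\alpha>0$. I would place marked points on $\partial U=\R\cup\{\infty\}$ at the finite points $0,1,a,b,c$ together with $\infty$ (or, if the horizontal symmetry is to be read off from a symmetric configuration, at these points and their images under a fixed involution of $\hat{\C}$), and define two Schwarz--Christoffel integrands $\varphi_1$ and $\varphi_2$, each a product of powers $(z-t_k)^{e_k(\alpha)}$ over the marked points $t_k$, with exponents $e_k(\alpha)$ affine in $\alpha$. The exponents must be chosen so that: $\int\varphi_1$ develops $U$ onto a Euclidean polygon, and $\int\varphi_2$ onto another one, whose edges alternate between being parallel to the real axis and making angle $\alpha\pi$ with it, as Proposition~\ref{prop:symmetries}(2) requires for the vertical symmetry planes of the hexagon; the product satisfies $\varphi_1\varphi_2=(dh)^2$ for a height differential $dh$ whose flat structure has a half-infinite strip at exactly four of the six marked points when $\alpha>0$ (so precisely four vertices of the hexagon lie at infinity) and at all six when $\alpha=0$; and $G=\rho^2\,\varphi_1/\varphi_2$ has the zero/pole orders appropriate to the catenoidal (resp.\ annular) ends there. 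After fixing branch cuts so that $\varphi_1,\varphi_2>0$ on $(0,1)$ — which also determines the rotations — I set $Gdh=\rho\varphi_1$ and $\frac1Gdh=\frac1\rho\varphi_2$, so that $dh=\sqrt{\varphi_1\varphi_2}$ is explicit and maps the real and imaginary axes of $U$ to the real and imaginary axes of $\C$. By Proposition~\ref{prop:symmetries}(1)--(2), $f=\re\int(\omega_1,\omega_2,\omega_3)$ then maps $U$ onto a minimal hexagon of the asserted type.

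For the horizontal reflection, I would exhibit an (anti-)conformal involution $\sigma$ of $U$, induced by a M\"obius involution of $\hat{\C}$ permuting the marked points, under which the exponent data of $\varphi_1$ and $\varphi_2$ are interchanged and $dh\mapsto-\conj{dh}$; this gives $\sigma^*G=\conj{1/G}$ and $\sigma^*dh=-\conj{dh}$, whence Proposition~\ref{prop:symmetries}(3) produces the symmetry of the hexagon across a horizontal plane. The existence of such a $\sigma$ imposes one compatibility relation on the labelling of $0,1,a,b,c$; I expect it to hold automatically once the two polygons are drawn with the correct combinatorics, but confirming this — together with nondegeneracy of the Schwarz--Christoffel polygons (no edge collapses) over the whole range $1<a<b<c$ — is a piece of sign bookkeeping to be done carefully.

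For $\alpha=0$ the integrands degenerate to rational $1$-forms, of the schematic shape
\begin{equation*}
Gdh=\rho\,\frac{z\,(a^2-z^2)\cdots}{(1-z^2)(b^2-z^2)\cdots}\,dz, \qquad \frac1Gdh=\frac1\rho\,\frac{(1-z^2)(b^2-z^2)\cdots}{z\,(a^2-z^2)\cdots}\,dz,
\end{equation*}
and, reading off the residues at $0,1,a,b,c,\infty$, one checks that the developed images are the degenerate hexagons (all vertices at infinity) of a Scherk-type tower, to which Proposition~\ref{prop:symmetries} again applies. I would also note that the construction depends real-analytically on $\alpha$ near $0$, which is what the period argument in the next subsection needs.

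The main obstacle is the combinatorics: pinning down exponents $e_k(\alpha)$ that simultaneously realize the three requirements above and are compatible with the involution $\sigma$. This is a finite linear problem — matching prescribed turning angles of two Euclidean polygons and prescribed divisor data for $G$ and $dh$ — but it is the crux of the lemma; once it is solved, the remainder is an application of Proposition~\ref{prop:symmetries} verbatim as in the octagon case.
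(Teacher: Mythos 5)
Your plan correctly reproduces the paper's strategy at the level of method: build two Schwarz--Christoffel integrands $\varphi_1,\varphi_2$ with exponents affine in $\alpha$, set $Gdh=\rho\varphi_1$ and $\frac1Gdh=\frac1\rho\varphi_2$ (so that $dh=\sqrt{\varphi_1\varphi_2}$ is rational and real on $\R$), and invoke Proposition~\ref{prop:symmetries}(1)--(3) for the vertical-plane edges and, via a conformal involution, the horizontal reflection. But what you yourself label ``the crux of the lemma'' --- exhibiting the exponents $e_k(\alpha)$ so that the two flat polygons have the right turning angles, $dh$ is rational and real on $\R$, the divisor of $G$ produces the correct ends, and the involution exists --- is the entire content of the statement, and you leave it as an unsolved ``finite linear problem.'' Until concrete integrands are written down and these properties verified, the lemma has not been proved; the proposal is a plan, not a proof.

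The configuration you do sketch is also the wrong one for this surface. Your $\alpha=0$ schematic, $Gdh\sim\rho\,\frac{z(a^2-z^2)\cdots}{(1-z^2)(b^2-z^2)\cdots}\,dz$, is the $DE_{3,n}$ pattern with a marked point at $0$ and the remaining points in symmetric factors $(t^2-z^2)$. For $DCCW_n$ the paper instead marks the eight points $\pm1,\pm a,\pm b,\pm c$ (neither $0$ nor $\infty$ is a branch point), with $\pm b$ entering $\varphi_1$ and $\varphi_2$ with \emph{integer} exponent $2$ --- so $b$ is not a vertex of the hexagon at all, only a point where the Gauss map is vertical --- and the 4 vertices at $\infty$ sit over $\pm a,\pm c$. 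The involution realizing the horizontal reflection is $\sigma:z\mapsto-\conj z$, reflection across the imaginary axis, which exists precisely because the marked points come in $\pm$ pairs; your proposed set $\{0,1,a,b,c,\infty\}$ does not admit such a $\sigma$, and does not yield the correct end structure. Finally, the paper verifies $\sigma^*G=\conj{1/G}$ only at $\rho=1$ (for $\rho\neq1$ one gets $\sigma^*G=\rho^2\conj{1/G}$, which is not the hypothesis of Proposition~\ref{prop:symmetries}(3)), so the L\'opez--Ros factor is not a free parameter if the horizontal symmetry is to be retained; your write-up asserts the symmetry with no mention of this constraint.
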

\begin{figure}[H]
	\begin{center}
		\subfigure[$\alpha>0$]{\includegraphics[width=3in]{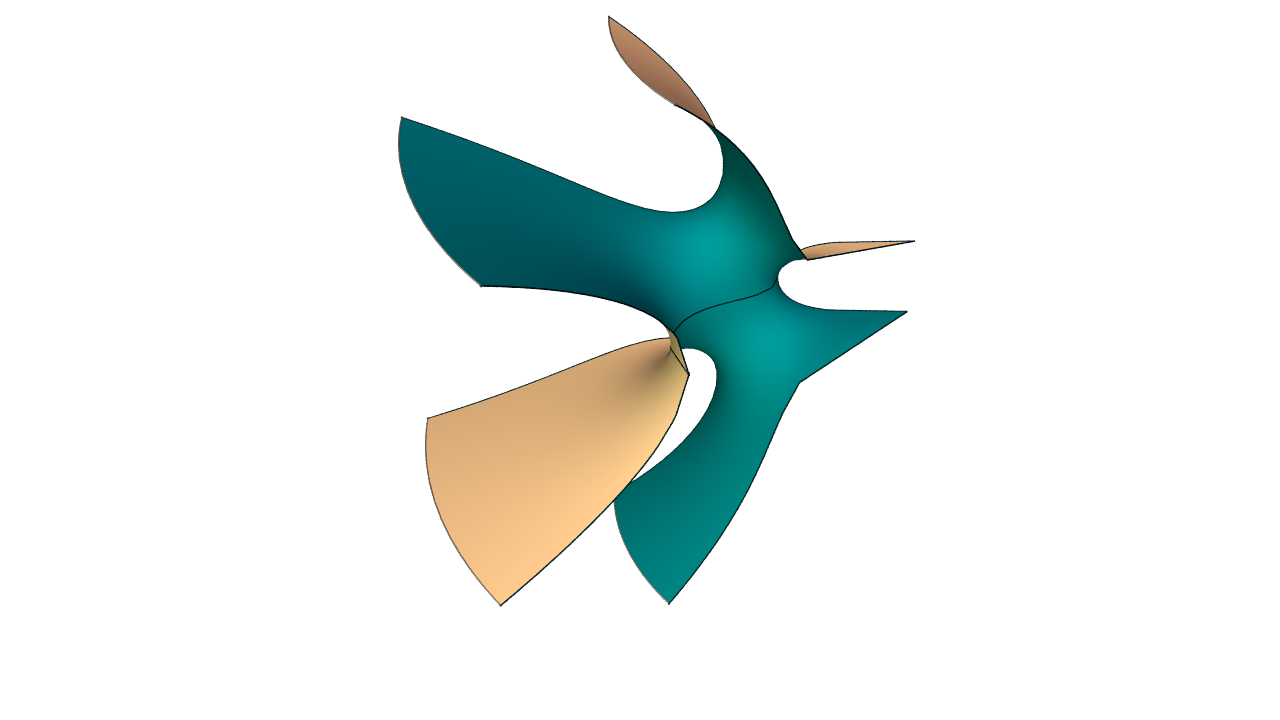}}
		\subfigure[$\alpha=0$]{\includegraphics[width=3in]{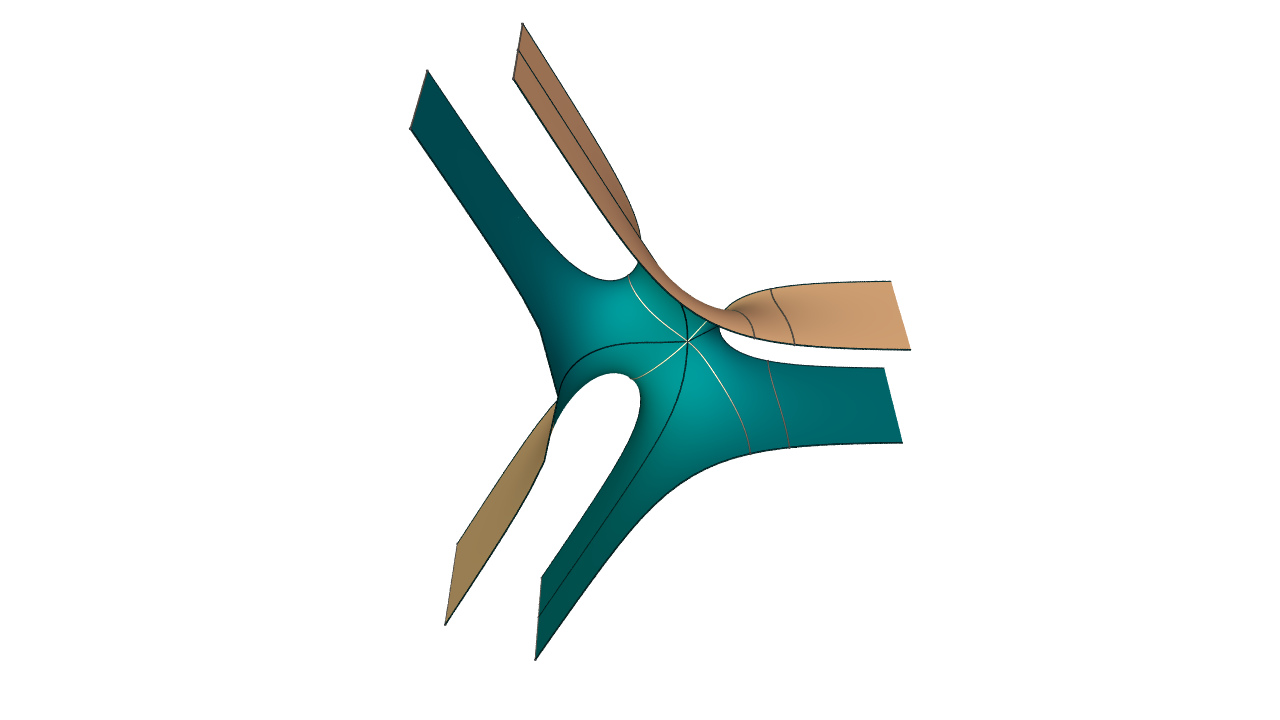}}
	\end{center}
	\caption{Minimal Hexagons in $\R^3$}
	\label{fig:DCCWhexagon}
\end{figure}
\begin{proof}
	We begin the proof by constructing Schwarz-Christoffel maps to the polygons in Figure \ref{fig:DCCWflat}. Define,
	\begin{equation}\begin{split}\label{DCCWintegrands}
			\varphi_{1}=
			(z+c)^{\alpha-1}
			(z+b)^{2}
			(z+a)^{-\alpha-1}
			(z+1)^{\alpha-1}
			(1-z)^{1-\alpha}
			(a-z)^{\alpha-1}
			(c-z)^{-\alpha-1}dz, \\
			\varphi_{2}=
			(z+c)^{-\alpha-1}
			(z+a)^{\alpha-1}
			(z+1)^{1-\alpha}
			(1-z)^{\alpha-1}
			(a-z)^{-\alpha-1}
			(b-z)^{2}
			(c-z)^{\alpha-1}dz .
	\end{split}\end{equation}
We can see that the Schwarz-Christoffel map, with integrand $\varphi_1$ defined in \eqref{DCCWintegrands}, sends the upper half plane to the right side of the polygon in Figure \ref{fig:DCCWflat}(a). Similarly, the Schwarz-Christoffel map, with integrand $\varphi_2$ defined in \eqref{DCCWintegrands}, sends the upper half plane to the left side of the polygon in Figure \ref{fig:DCCWflat}(b).
\begin{figure}[H]
	\begin{center}
		\subfigure[$Gdh$]{\includegraphics[width=2.5in]{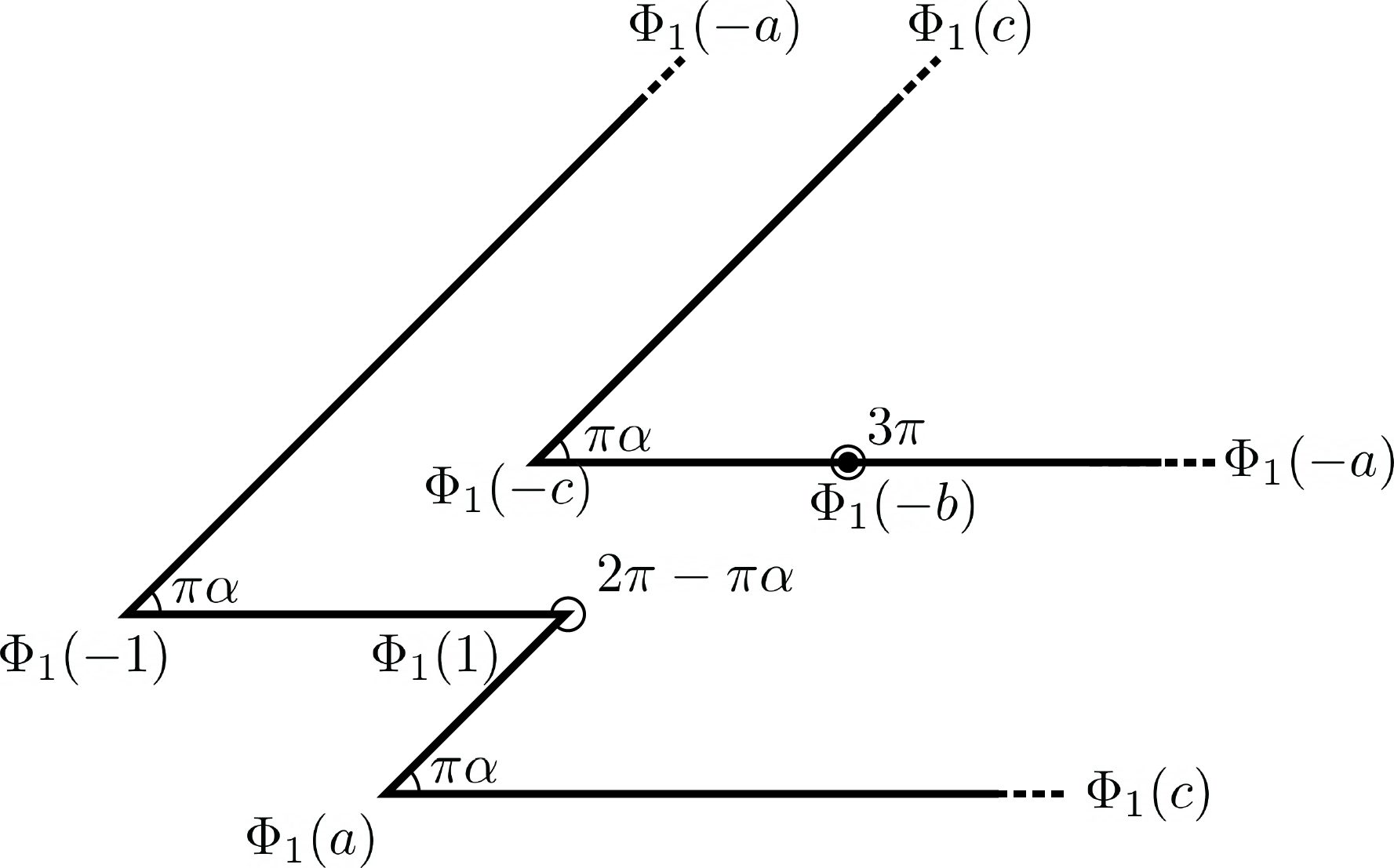}}
		\qquad\qquad\qquad
		\subfigure[$\frac{1}{G}dh$]{\includegraphics[width=2.5in]{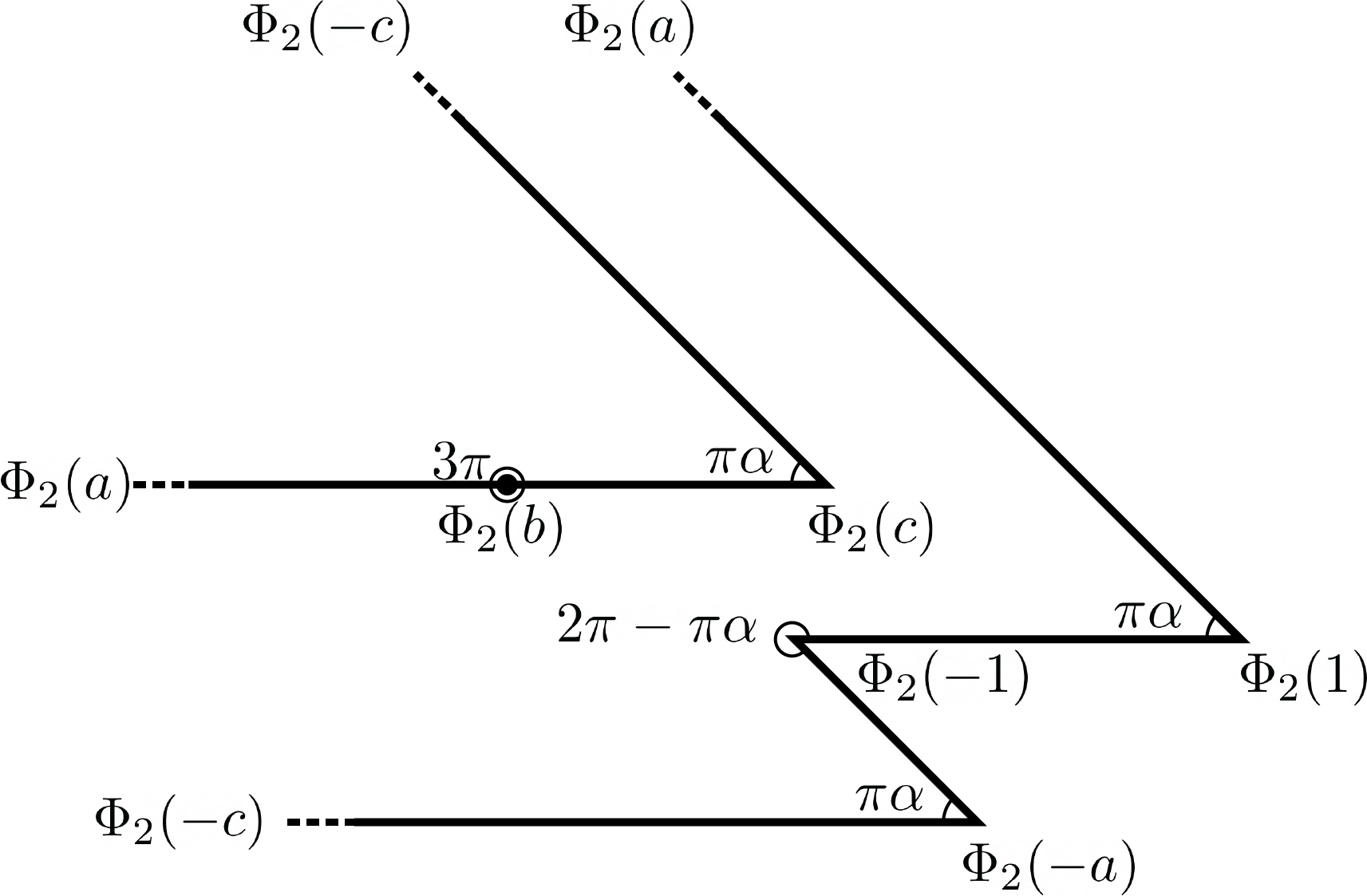}}
	\end{center}
	\caption{$DCCW_n$ Flat Structures}
	\label{fig:DCCWflat}

\end{figure}
We choose branch cuts that allow $\varphi_{i}$ to be positive on the interval $[-1,1]$. The forms $\varphi_i$ then become the Weierstrass $1$-forms $Gdh$ and $\frac{1}{G}dh$ after scaling by a real factor. Consequently, the Gauss map and the height differential are given by
\begin{align*}
	G=\rho(z+c)^{\alpha}
	(z+b)
	&(z+a)^{-\alpha}
	(z+1)^{\alpha-1}
	(1-z)^{1-\alpha}
	(a-z)^{\alpha}
	(b-z)^{-1}
	(c-z)^{-\alpha},\\
	dh&=\frac{(b-z)(z+b)}{(a-z)(a+z)(c+z)(c-z)}dz.
\end{align*}
Observe that $dh$ assumes real values on the real line. As a consequence of Proposition \ref{prop:symmetries}(2), we see that the Weierstrass map $f$ maps the real line to vertical symmetry curves. Also observe that for $\rho=1$, we have $\sigma^*G=\conj{\frac{1}{G}}$ and $\sigma^*dh=-\conj{dh}$, where $\sigma$ denotes the reflection on the imaginary line. In conjunction with Proposition \ref{prop:symmetries}(3), this implies that $\sigma$ can be realized by a reflection with respect to a horizontal plane in $\R^3$ and
the imaginary line in the upper half plane is mapped into this symmetry plane in $\R^3$. Hence, we deduce that the Weierstrass map $f$ maps the upper half plane to a minimal hexagon (cf. Figure \ref{fig:DCCWhexagon}).

As in Lemma \ref{DEoctagon}, we consider the limits of the Schwarz-Christoffel maps $\Phi_i$ and the limits of polygons given in Figure \ref{fig:DCCWflat}, as $\alpha$ goes to $0$. At $\alpha=0$, we have
\begin{equation*}
	Gdh=
	\frac{(z+b)^2(1-z)}{(z+1)(c^2-z^2)(a^2-z^2)}dz ,
	\qquad	
	\frac{1}{G}dh=
	\frac{(b-z)^2(z+1)}{(1-z)(c^2-z^2)(a^2-z^2)}dz.
\end{equation*}
Since $\Phi_{i}$ are well defined on $\C$, we can calculate the residues and show that, as $\alpha \rightarrow 0$, the polygons in Figure \ref{fig:DCCWflat}(a),(b) converge to the polygons in Figure \ref{fig:DCCWlimitflat}(a),(b) respectively.
\begin{figure}[H]
	\begin{center}
		\subfigure[$Gdh$]{\includegraphics[width=3in]{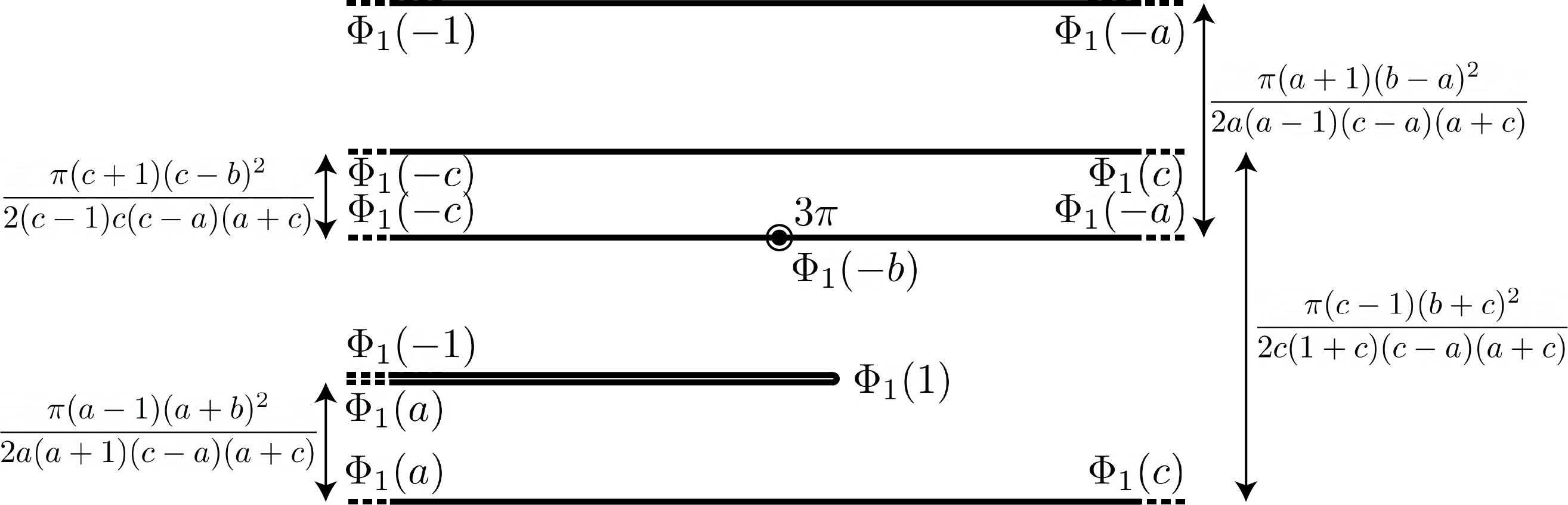}}
		\subfigure[$\frac{1}{G}dh$]{\includegraphics[width=3in]{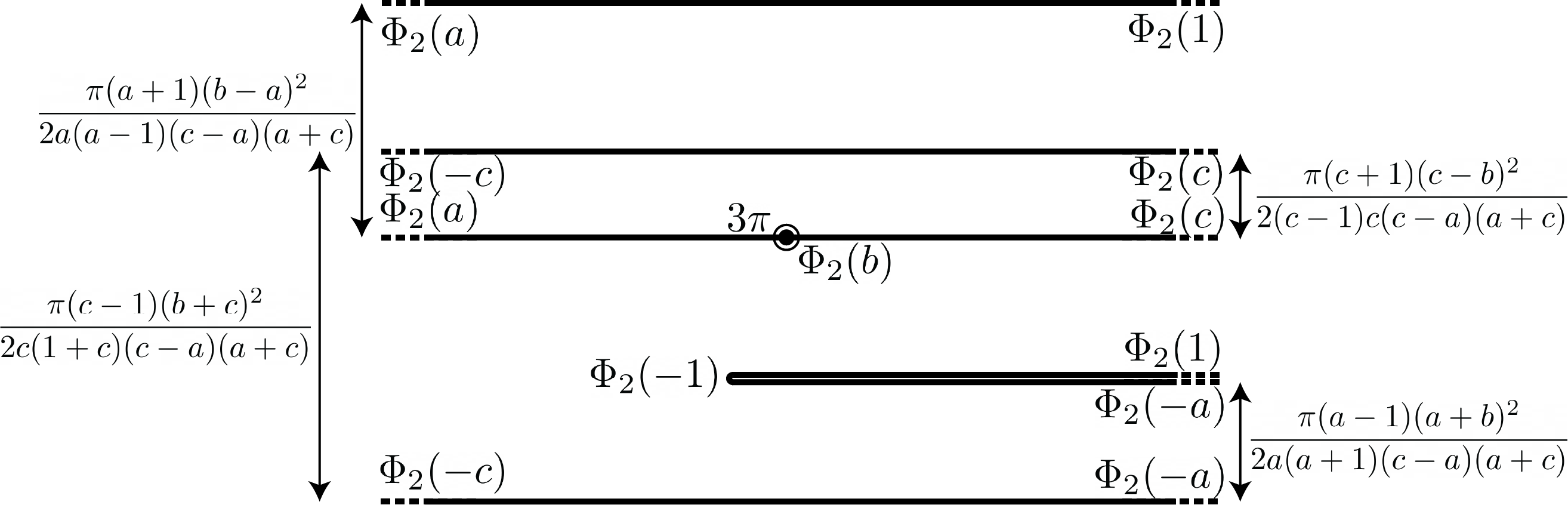}}
	\end{center}
	\caption{$DCCW_n$ flat structures as $\alpha \rightarrow 0$}\label{fig:DCCWlimitflat}	
\end{figure}
Then by an application of Proposition \ref{prop:symmetries}, we finish the construction of the polygons in Figure \ref{fig:DCCWhexagon}.
\end{proof}

\subsection{Period Problem}	\label{DCCWperiodproblem}
 In order to extend the minimal hexagon, obtained in Lemma \ref{lem:DCCWhexagon}, to a minimal wedge $W_{2\pi\alpha}$, we need to show that the sides of the hexagon in Figure \ref{fig:DCCWhexagon}(a) that make the same angle with the $x$-axis, lie on the same plane. This is similar to the extension of the half wedge corresponding to the $DE_{3,n}$ surface found in Section \ref{sec:DEperiod}.
	Thanks to the fact that these hexagons are symmetric with respect to reflections on a horizontal plane, this can be achieved by showing that the two conditions in \eqref{DCCWperiod} are satisfied: 
\begin{equation}\label{DCCWperiod}
	\int_{\gamma_{1,a}}Gdh=\conj{\int_{\gamma_{1,a}}\frac{1}{G}dh}, \qquad 	\int_{\gamma_{a,c}}Gdh=\conj{\int_{\gamma_{a,c}}\frac{1}{G}dh} ,
\end{equation}	
	where $\gamma_{i,j}$ is as described in Section \ref{sec:DEperiod}. Note that for $\alpha=0$, the conditions in \eqref{DCCWperiod} are also valid for the limiting hexagon in Figure \ref{fig:DCCWhexagon}. 

\begin{figure}[H]
	\begin{center}
		\subfigure[$\alpha>0$]{\includegraphics[width=3in]{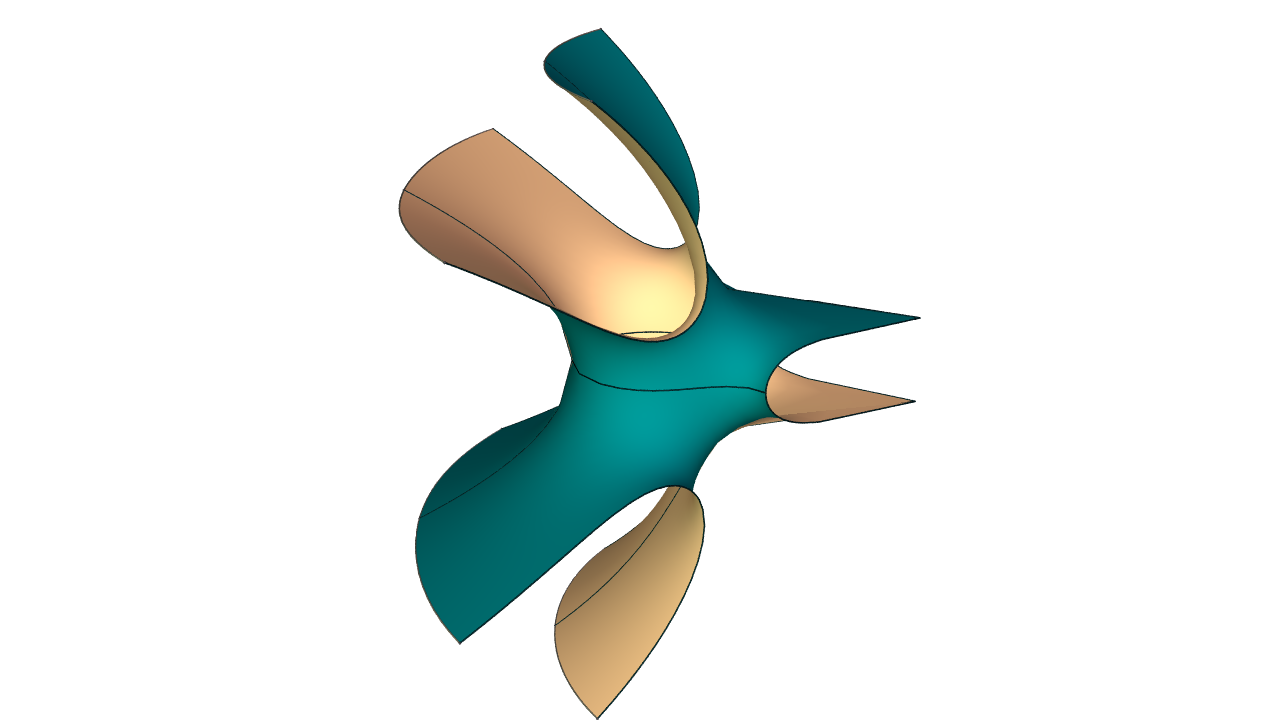}}
		\subfigure[$\alpha=0$]{\includegraphics[width=2.6in]{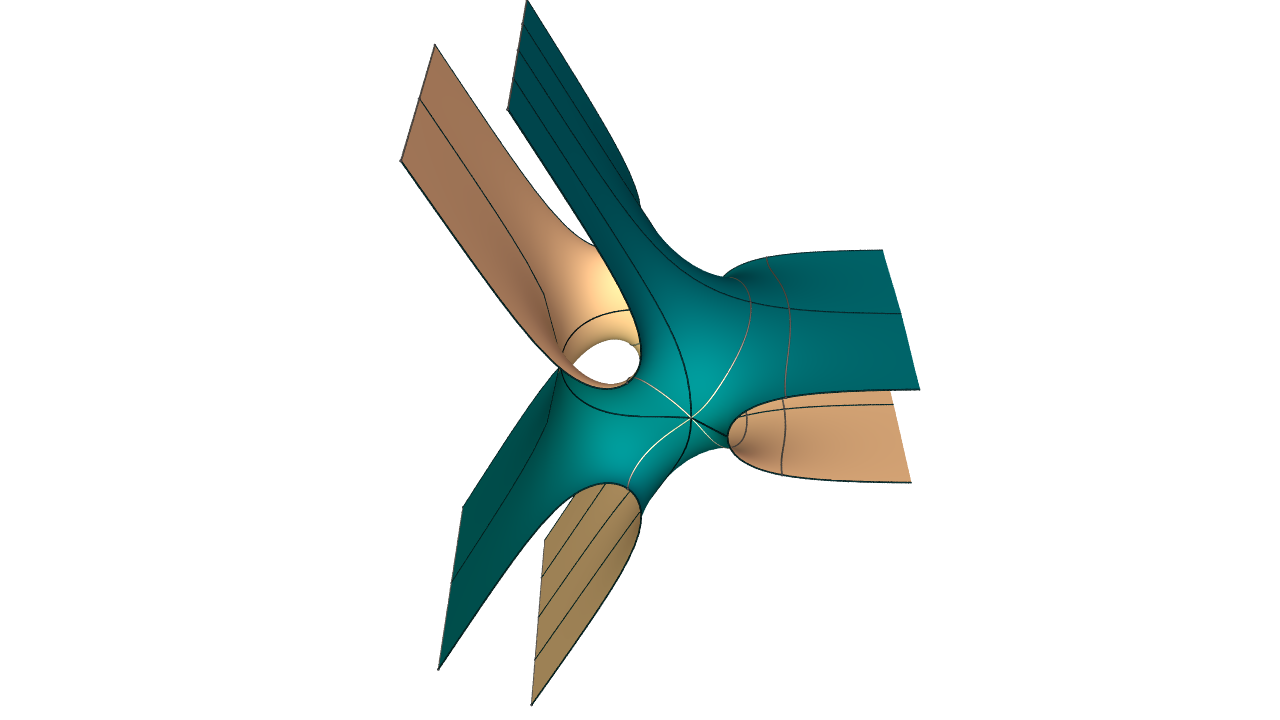}}
	\end{center}
	\caption{$W_{2\pi\alpha}$ corresponding to $DCCW_{n}$}
\end{figure}

	We reformulate the period conditions in \eqref{DCCWperiod} of $W_{2\pi\alpha}$ as follows. First we define the period function $P$ by
	\begin{equation*}
		P(a,b,c,\alpha):=\Bigg\{ 		\int_{\gamma_{1,a}}G dh-\conj{\int_{\gamma_{1,a}}\frac{1}{G}dh}
		,			\int_{\gamma_{a,c}}Gdh-\conj{\int_{\gamma_{a,c}}\frac{1}{G}dh}
		\Bigg\}.
	\end{equation*}
	For a given $\alpha>0$, the existence of a triple $a,b,c$  satisfying the condition $P(a,b,c,\alpha)=0$ implies the existence of a $W_{2\pi\alpha}$ surface, which is a corresponding wedge of the $DCCW_n$ surface. Observe that for $\alpha=0$, using the Residue Theorem, we can write the first coordinate of $P$ as
	\begin{align*}
		\int_{\gamma_{1,a}}Gdh-\conj{\int_{\gamma_{1,a}}\frac{1}{G}dh}&=\pi i\bigg(\Res(Gdh,1)+\Res(Gdh,a)-\conj{\Res(\frac{1}{G}dh,1)}-\conj{\Res(\frac{1}{G}dh,a)}\bigg)\\
		&=\pi i\bigg(\frac{(a+b)^2(1-a)}{(a+1)(c^2-a^2)(2a)}+\frac{2(b-1)^2}{(c^2-1)(a^2-1)}\\
		&\qquad \qquad+\frac{(b-a)^2(a+1)}{(1-a)(c^2-a^2)(2a)}\bigg),
		\shortintertext{whereas the second coordinate of $P$ can be written as:}	
		\int_{\gamma_{a,c}}Gdh-\conj{\int_{\gamma_{a,c}}\frac{1}{G}dh}&=\pi i\bigg(\Res(Gdh,a)+\Res(Gdh,c)-\conj{\Res(\frac{1}{G}dh,a)}-\conj{\Res(\frac{1}{G}dh,c)}\bigg)\\
		&=\pi i\bigg(\frac{(a+b)^2(1-a)}{(a+1)(c^2-a^2)(2a)}+\frac{(c+b)^2(1-c)}{(c+1)(2c)(a^2-c^2)} \\
		&\qquad \qquad+\frac{(b-a)^2(a+1)}{(1-a)(c^2-a^2)(2a)}+\frac{(b-c)^2(c+1)}{(1-c)(2c)(a^2-c^2)}\bigg).
	\end{align*}
Next we regenerate solutions the period problem of $W_{2\pi\alpha}$ corresponding to $DCCC_{n}$. 
\begin{lemma}\label{lem:DCCWwedge}
	For small enough $\alpha \geq 0$, there exists a minimal wedge $W_{2\pi\alpha}$, corresponding to $DCCW_{n}$ given in Theorem \ref{DCCWtheorem}.
\end{lemma}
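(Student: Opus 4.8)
The plan is to follow the template of Lemma~\ref{lem:DEwedge} and solve the period problem by the Implicit Function Theorem, regenerating solutions from the explicitly solvable degenerate case $\alpha=0$. Here the period map $P(a,b,c,\alpha)$ of Section~\ref{DCCWperiodproblem} takes values in a two-dimensional space (at $\alpha=0$ each component is $\pi i$ times a real rational function of $a,b,c$, and the ray structure that forces this persists for $\alpha>0$ since $\rho=1$ and the branch cuts make the $\varphi_i$ positive on $[-1,1]$), while it has the three spatial parameters $a,b,c$. Consequently its zero set at fixed $\alpha$ is a curve rather than a point, and it will be enough to locate one point of the $\alpha=0$ zero set and continue it, holding one of $a,b,c$ fixed.

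First I would use the residue formulas displayed just before the lemma to write $P(a,b,c,0)$ explicitly. Grouping the contributions at $a$ with those at $c$ (they combine, as in the analogous step for $DE_{3,n}$), the two equations $P(\cdot,\cdot,\cdot,0)=0$ reduce to two polynomial relations among $a,b,c$, and one checks that there is an admissible triple $(a_0,b_0,c_0)$ with $1<a_0<b_0<c_0$ solving them. This step alone establishes the existence of the limiting wedge $W_0$ built on the limiting hexagon of Figure~\ref{fig:DCCWhexagon}(b).

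Next I would verify that $P$ extends to a differentiable --- indeed real-analytic --- function of $(a,b,c,\alpha)$ near $\{\alpha=0\}$ inside $\{\alpha\ge 0\}$. The only subtlety is that at $\alpha=0$ the exponents $\alpha-1$ and $-\alpha-1$ in \eqref{DCCWintegrands} degenerate to $-1$, so the individual integrals $\int_i^j\varphi_k$ blow up; however, the quantities that actually enter $P$ are the loop integrals $\int_{\gamma_{i,j}}\varphi_k$, and these stay finite because $\gamma_{i,j}$ passes through no branch point --- along $\gamma_{i,j}$ (with the branch fixed at its starting point) the integrand is an analytic family of holomorphic forms in all four parameters, including at $\alpha=0$, where the loop integrals reduce to the residue expressions written above. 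Granting this, I would compute some $2\times 2$ minor of the spatial Jacobian of $P$, say $\partial(P_1,P_2)/\partial(a,c)$, at $(a_0,b_0,c_0,0)$ and check that it is nonzero --- a routine but somewhat lengthy determinant computation, entirely parallel to the Jacobian calculation in Lemma~\ref{lem:DEwedge}, which I would only indicate rather than carry out.

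Finally, the Implicit Function Theorem --- applied with $b$ held fixed at $b_0$ and $\alpha$ as the deformation parameter --- yields, for every sufficiently small $\alpha\ge 0$, a pair $(a(\alpha),c(\alpha))$, smooth in $\alpha$, with $(a(0),c(0))=(a_0,c_0)$ and $P(a(\alpha),b_0,c(\alpha),\alpha)=0$; by continuity $1<a(\alpha)<b_0<c(\alpha)$ for $\alpha$ small. For these parameter values Lemma~\ref{lem:DCCWhexagon} supplies the minimal hexagon, and $P=0$ says precisely that its sides making equal angles with the $x$-axis lie in common vertical symmetry planes, so it extends by reflection in those planes to a minimal wedge $W_{2\pi\alpha}$ of the combinatorial type of $DCCW_n$. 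The principal obstacle is the pair of computational facts --- exhibiting the correct base solution at $\alpha=0$ and checking the nonvanishing of the Jacobian there; the real-analytic dependence on $\alpha$ across the degenerate value $\alpha=0$ is conceptually the most delicate ingredient but, exactly as in the $DE_{3,n}$ argument, follows from standard properties of loop integrals of forms with parameter-dependent exponents.
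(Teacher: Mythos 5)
Your proposal follows the same two-step strategy as the paper — exhibit an explicit zero of the period map at $\alpha=0$ and continue it by the Implicit Function Theorem — but you make a different reduction from three spatial parameters to two. You propose to freeze $b=b_0$ and solve for $(a,c)$, whereas the paper imposes the algebraic constraint $c=\tfrac{b^2}{a}$ and solves $P(a,b,\tfrac{b^2}{a},\alpha)=0$ for $(a,b)$. The paper's constraint is not arbitrary: it is precisely the condition that the two catenoidal ends of $W_{2\pi\alpha}$ have equal growth rate (this is how the explicit base point $a_0=-3\sqrt2+3\sqrt3+2\sqrt6-4$, $b_0=2\sqrt2+3$, $c_0=b_0^2/a_0$ is found, and it is what makes Remark~\ref{rem:DCCWwedge} hold). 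Your reduction would also prove existence of \emph{some} wedge, since for each small $\alpha$ the zero set of $P$ is (generically) a curve and the two constraints just pick out different points on it; but it would not deliver the equal-growth-rate feature, and the nonvanishing you need is that of the minor $\partial(P_1,P_2)/\partial(a,c)$ at the base point, which is a different quantity from the Jacobian of $a\mapsto P(a,b,\tfrac{b^2}{a},0)$ that the paper actually evaluates (numerically, to $0.000151467$). Those two nonvanishing statements are related (either implies that the full $2\times 3$ Jacobian has rank $2$) but neither follows formally from the other, so you would need to carry out your own minor computation rather than cite the paper's. Two further remarks: you correctly identify and address the only analytic subtlety, namely that the \emph{loop} integrals $\int_{\gamma_{i,j}}\varphi_k$ remain finite and analytic across $\alpha=0$ even though the interval integrals diverge there — the paper takes this for granted. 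On the other hand you do not actually produce the admissible triple $(a_0,b_0,c_0)$ or verify the Jacobian nonvanishing; the paper does both explicitly, and these are the computational content of the lemma, so your argument is a correct outline but not yet a complete proof.
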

\begin{proof}
	It is easy to see for $a=-3 \sqrt{2}+3 \sqrt{3}+2 \sqrt{6}-4$, $b=2 \sqrt{2}+3$ and for $\alpha=0$, that $P(a,b,\frac{b^2}{a},0)=0$. One can show that the determinant of the Jacobian of $P(a,b,\frac{b^2}{a},0)$ is precisely equal to $0.000151467$ for values of $a$ and $b$ given above. Here, we exclude writing the explicit expression for the determinant of the Jacobian of $P(a,b,\frac{b^2}{a},0)$ as it is a rational function of polynomials in two variables whose numerator is of degree $11$ and denominator is of degree $16$. Then by the Implicit Function Theorem, for small enough $\alpha \geq 0$, the period condition given in \eqref{DCCWperiod} can be solved. Thus we conclude the proof of Lemma \ref{lem:DCCWwedge}.
\end{proof}
\subsection{Extension, Types of Ends and Growth rates}	
In Section \ref{DCCWperiodproblem}, we demonstrated how the period problem can be solved for a natural number $n$ and for $\alpha=\frac{1}{n}$. Now, a winding number argument similar to the one in Section \ref{sec:type of ends}, shows that the ends of $DCCW_n$ are catenoidal. The growth rate of catenoidal end around $a$ is $\frac{b^2-a^2}{2a(a^2-c^2)}$ and that around $c$ is $\frac{c^2-b^2}{2c(a^2-c^2)}$. For simplicity, we look for solutions where the two growth rates are equal. One can easily show that for $c=\frac{b^2}{a}$, these growth rates are equal. Geometrically this means that outside of a compact cylinder in $\R^3$, $DCCW_n$ looks like a union of two asymptotic concentric catenoids, one inside of the other. On the other hand at $\alpha=0$, the catenoidal ends are replaced by Scherk type ends and the limit surface of $DCCW_n$ is a singly periodic minimal surface with genus $0$. Joaquin Perez and  Martin Traizet proved in \cite{pt1} that Hermann Karcher’s saddle towers are the only singly periodic minimal surfaces of genus 0 with annular ends. In particular, the dihedral limit of $DCCW_n$ is a lesser symmetric Scherk Tower with $6$ ends. 
\begin{remark}\label{rem:DCCWwedge}
	The catenoidal ends of the minimal wedges $W_{2\pi\alpha}$, obtained in Lemma \ref{lem:DCCWwedge}, have equal growth rates.
\end{remark}

We still need to show that the dihedral limit of $DCCW_n$ is a lesser symmetric saddle tower with 6 ends. One can show that for the values of $a,b,c$ given in Lemma \ref{lem:DCCWwedge} i.e. $a=-3 \sqrt{2}+3 \sqrt{3}+2 \sqrt{6}-4$, $b=2 \sqrt{2}+3$ and $c=\frac{b^2}{a}$, the M\"{o}bius transformation that maps $1$ to $c$, $a$ to $-c$ and $c$ to $-a$ is an orientation preserving automorphism of the upper half plane. Additionally, this map is realized by the Weierstrass map $f$, as a 3rd order rotational symmetry of the limit surface, around a line that is parallel to the $y$-axis in $\R^3$. Combining this information with Lemma \ref{lem:DCCWwedge} we finish the proof of Theorem \ref{DCCWtheorem}. 
\end{proof}

\section{Dihedralized Singly Period Karcher Scherk Surface with Handles}\label{sec:DKS}

In this section we will discuss the proof of Theorem \ref{DKStheorem}. Unlike the previous three examples, we are unable to construct $DKS_n$ surfaces using the Schwarz-Christoffel maps because $DKS_n/\langle r \rangle$, where $r$ is $n$th order rotation, is of genus $1$. Instead we will utilize $\vartheta$-functions introduced in Section \ref{sec:geom}.

\begin{proof}[Proof of Theorem \ref{DKStheorem}]
We start the construction of this surface on a rectangular torus $\C/\Lambda_{\tau}$ as follows:
Let $\Gamma$ be the quarter of a fundamental parallelogram of $\Lambda_{\tau}$ with $\partial\Gamma=\partial\Gamma_{dl}\cup\partial\Gamma_{dr}\cup\partial\Gamma_r\cup\partial\Gamma_u\cup\partial\Gamma_{lu}\cup\partial\Gamma_{ld}$ where $\partial\Gamma_{dl}:=\{x: x\in(0,\frac{1}{2}-a) \}$, $\partial\Gamma_{dr}:=\{x: x\in(\frac{1}{2}-a,\frac{1}{2}) \}$, $\partial\Gamma_r:=\{\frac{1}{2}+y \tau: y\in(0,1/2) \}$, $\partial\Gamma_u:=\{s: x+\frac{\tau}{2}: x\in(0,1/2) \}$, $\partial\Gamma_{lu}:=\{y i: y\in(\frac{\im(\tau)}{2}-c,\frac{\im(\tau)}{2})\}$ and $\partial\Gamma_{ld}:=\{y i: y\in(0,\frac{\im(\tau)}{2}-c) \}$, as shown in Figure \ref{fig:quarterdomain}.
	\begin{figure}[H]
	\begin{center}
		\includegraphics[scale=.6]{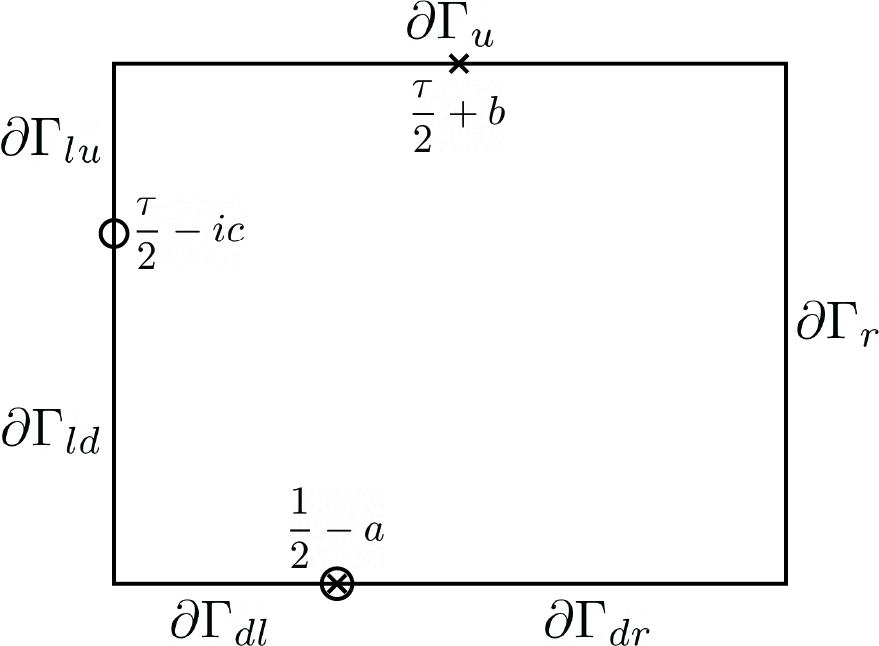}
	\end{center}\caption{A quarter fundamental domain}\label{fig:quarterdomain}
	
\end{figure}

\begin{lemma}\label{lem:DKKShexagon}
	Let $a,b\in(0,\frac{1}{2})$, $\tau\in i\R^+$ and $c\in(0,\im(\frac{\tau}{2}))$. Then there is a Weierstrass map $f$ that maps $\Gamma$ to a minimal hexagon where the images of the horizontal sides  $\partial\Gamma_{dr}$, $\partial\Gamma_{dr}$ and $\partial\Gamma_{u}$ are the edges that lie on vertical symmetry planes and the images of the vertical sides $\partial\Gamma_{r}$, $\partial\Gamma_{lu}$ and $\partial\Gamma_{ld}$ are the edges that lie on horizontal symmetry planes. Moreover, for $\alpha>0$, $f(\frac{\tau}{2}-ic)$ is the only vertex at $\infty$ with an interior angle $0$, $f(\frac{1}{2}-a)$ is a vertex with an interior angle $\alpha\pi$, and the corners of the quarter rectangle are mapped to the vertices where edges in $\R^3$ meet orthogonally. For $\alpha=0$, both $f(\frac{\tau}{2}-ic)$ and $f(\frac{1}{2}-a)$ are vertices at $\infty$.
\end{lemma}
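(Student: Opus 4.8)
The plan is to transplant the Schwarz--Christoffel construction of Lemmas \ref{DEoctagon} and \ref{lem:DCCWhexagon} to the rectangular torus $\C/\Lambda_\tau$, with quotients of the $\vartheta$-function (allowing fractional powers) playing the role of the elementary Schwarz--Christoffel integrands. Concretely, I would first write down a pair of $1$-forms of the shape
\begin{equation*}
	\varphi_1=\vartheta\big(z-\tfrac12+a\big)^{\alpha-1}\prod_{j}\vartheta(z-p_j)^{m_j}\,dz, \qquad
	\varphi_2=\vartheta\big(z-\tfrac12+a\big)^{1-\alpha}\prod_{j}\vartheta(z-p_j)^{m_j'}\,dz,
\end{equation*}
where the remaining nodes $p_j$ sit at the four corners $0,\tfrac12,\tfrac12+\tfrac\tau2,\tfrac\tau2$ of $\Gamma$, at the prospective end $q=\tfrac\tau2-ic$, and at a further node determined by $b$ (responsible for the handle), and where the integer exponents $m_j,m_j'$ are chosen so that: (i) $\varphi_i$ has a pole at $q$ but is holomorphic and nonvanishing at the four corners; (ii) the zeros and the poles of $\varphi_i/dz$ have equal sums modulo $\Lambda_\tau$, so that $\varphi_i/dz$ is elliptic; and (iii) $\varphi_1\varphi_2$ is the square of an honest elliptic $1$-form, so that $dh:=\sqrt{\varphi_1\varphi_2}$ is single-valued on $\C/\Lambda_\tau$. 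One then sets $Gdh=\varphi_1$ and $\tfrac1G dh=\varphi_2$, so $G=\sqrt{\varphi_1/\varphi_2}$; since $\Gamma$ is simply connected and the only node on $\partial\Gamma$ carrying a non-integer exponent is $\tfrac12-a$, a branch of $G$ is well-defined on $\overline{\Gamma}$, with the branch cut leaving $\Gamma$ exactly as in Section \ref{sec:DE}. The branches of the fractional powers are fixed by declaring $\varphi_1,\varphi_2$ positive on one of the arcs $\partial\Gamma_{dl},\partial\Gamma_{dr}$, and the overall multiplicative constant is fixed as $\rho=1$ was fixed in Lemma \ref{lem:DCCWhexagon}.

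Next I would check the hypotheses of Proposition \ref{prop:symmetries} on each of the six boundary arcs using the transformation laws (1)--(5) of $\vartheta$. On the horizontal arcs $\partial\Gamma_{dl},\partial\Gamma_{dr}$ and $\partial\Gamma_u$ (the last after translating by $-\tfrac\tau2$), property (5) forces $dh$ to be real while $Gdh$ and $\tfrac1Gdh$ make constant angles with $\R$: that angle is $0$ on $\partial\Gamma_{dl}$ and, because of the factor $\vartheta(z-\tfrac12+a)^{\alpha-1}$, jumps by $\pi(\alpha-1)$ across $\tfrac12-a$, so that the images of $\partial\Gamma_{dr}$ and $\partial\Gamma_u$ lie at angle $\pm\alpha\pi$; Proposition \ref{prop:symmetries}(2) then puts these three edges on vertical symmetry planes. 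On the vertical arcs $\partial\Gamma_r$ (in $\re z=\tfrac12$) and $\partial\Gamma_{lu},\partial\Gamma_{ld}$ (in $\re z=0$), the reflections $\sigma_1\colon z\mapsto1-\conj{z}$ and $\sigma_2\colon z\mapsto-\conj{z}$ are involutions fixing these arcs, and one verifies from properties (1),(2),(3),(5) and the symmetric placement of the nodes that $\sigma_j^*G=\conj{1/G}$ and $\sigma_j^*dh=-\conj{dh}$; Proposition \ref{prop:symmetries}(3) then puts these three edges on horizontal symmetry planes.

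Finally I would read off the vertices. At the four corners $0,\tfrac12,\tfrac12+\tfrac\tau2,\tfrac\tau2$, the forms $\varphi_i$ are holomorphic and nonzero, so $\int\varphi_i$ is conformal there and the image interior angle equals the domain angle $\tfrac\pi2$: a vertical symmetry plane and a horizontal one meet orthogonally. At $\tfrac12-a$ the flat structures of $\varphi_1,\varphi_2$ carry a cone point of angle $\alpha\pi$, which is the claimed interior angle; as $\alpha\to0$ this angle collapses to $0$ and $\vartheta(z-\tfrac12+a)^{\alpha-1}$ becomes the genuine simple pole $\vartheta(z-\tfrac12+a)^{-1}$, so $\tfrac12-a$ becomes an additional vertex at $\infty$, and a residue computation as in Sections \ref{sec:type of ends} and \ref{DCCWperiodproblem} identifies the limiting polygon. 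At $q=\tfrac\tau2-ic$ the forms $\varphi_i$ have poles, so $\int\varphi_i$ develops a half-strip, producing a vertex at infinity of interior angle $0$; a winding-number count as in Section \ref{sec:type of ends} shows that for $\alpha>0$ this is the only vertex at infinity. Assembling these observations, Proposition \ref{prop:symmetries} shows that $f$ maps $\Gamma$ onto the minimal hexagon of the statement. The main obstacle is the bookkeeping in the first step: the node positions and the integer exponents $m_j,m_j'$ must be selected so that (i)--(iii) hold simultaneously with the two reflection relations for $\sigma_1,\sigma_2$, and the quasi-periodicity factor $-e^{-\pi i\tau-2\pi iz}$ of property (3) has to be tracked with care, since under $z\mapsto z+\tau$ it contributes exponential prefactors that must cancel in $G$ and $dh$; once a consistent choice is fixed, the rest is the bounded-argument and residue reasoning already used for $DE_{3,n}$ and $DCCW_n$.
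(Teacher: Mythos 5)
Your overall strategy matches the paper's: replace the linear Schwarz--Christoffel factors by $\vartheta$-factors on the rectangular torus, fix branches so the integrands are positive on a chosen boundary arc, normalize so that $G$ takes a unimodular value at a base point, and then verify the hypotheses of Proposition~\ref{prop:symmetries} arc by arc via the transformation laws (1)--(5). The gap is in the ansatz you defer as ``bookkeeping,'' and it is not a chore but the crux. The paper's nodes occur in \emph{reflected pairs} $\tfrac12\pm a$, $\tfrac\tau2\pm b$, $\tfrac\tau2\pm ic$, with the four corners of $\Gamma$ as ordinary (regular) points; explicitly
\begin{equation*}
G=e^{-2\pi ib}\frac{\vartheta(z-(\tfrac\tau2-b))}{\vartheta(z-(\tfrac\tau2+b))}\Bigl(\frac{\vartheta(z-(\tfrac12+a))}{\vartheta(z-(\tfrac12-a))}\Bigr)^{1-\alpha},\qquad
dh=\frac{\vartheta(z-(\tfrac\tau2+b))\,\vartheta(z-(\tfrac\tau2-b))}{\vartheta(z-(\tfrac\tau2-ic))\,\vartheta(z-(\tfrac\tau2+ic))}\,dz,
\end{equation*}
normalized so that $G(0)=1$. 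The pairing is forced: under $z\mapsto z+\tau$ each $\vartheta(z-p)$ picks up the $z$-dependent prefactor $-e^{-\pi i\tau-2\pi i(z-p)}$, and only when the fractional exponent $1-\alpha$ sits on the \emph{ratio} $\vartheta(z-(\tfrac12+a))/\vartheta(z-(\tfrac12-a))$ do these $z$-dependent exponentials cancel to a constant, so that $\arg G$ is constant along $\partial\Gamma_u$ and that arc maps to a straight line. The same pairing is what makes $\sigma^*G=\conj{1/G}$ and $\sigma^*dh=-\conj{dh}$ hold for the reflections in $\re z=0$ and $\re z=\tfrac12$. Your ansatz has a lone fractional factor $\vartheta(z-\tfrac12+a)^{\alpha-1}$, a single $b$-node, and $\tfrac\tau2-ic$ without its mirror $\tfrac\tau2+ic$: the lone fractional factor contributes an uncancelled $z$-dependent prefactor $\bigl(-e^{-\pi i\tau-2\pi i(z-\tfrac12+a)}\bigr)^{\alpha-1}$ under $z\mapsto z+\tau$, so $\arg G$ would not be constant on $\partial\Gamma_u$, and the reflection relations for $\sigma_1,\sigma_2$ would fail for want of symmetric partners. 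These obstructions do not disappear by choosing integer exponents $m_j,m_j'$ cleverly; the unpaired nodes must be added.

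A secondary inaccuracy: the image of $\partial\Gamma_u$ does not lie at angle $\pm\alpha\pi$ in general. The quasi-periodicity gives $\arg G\equiv-2\pi b+2\pi a(1-\alpha)\pmod{2\pi(1-\alpha)}$ along $\partial\Gamma_u$, so the angle depends on $b$ and on a branch choice; the paper only resolves this in Proposition~\ref{prop:parallelplanes}, and then fixes $b=a(1-\alpha)+\tfrac\alpha2$ afterward. For the lemma one only needs constancy of the argument on each horizontal arc, which salvages your conclusion (vertical symmetry plane) but not the claimed angle.
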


\begin{figure}[H]
	\begin{center}
		\subfigure[$\alpha>0$]{\includegraphics[width=3in]{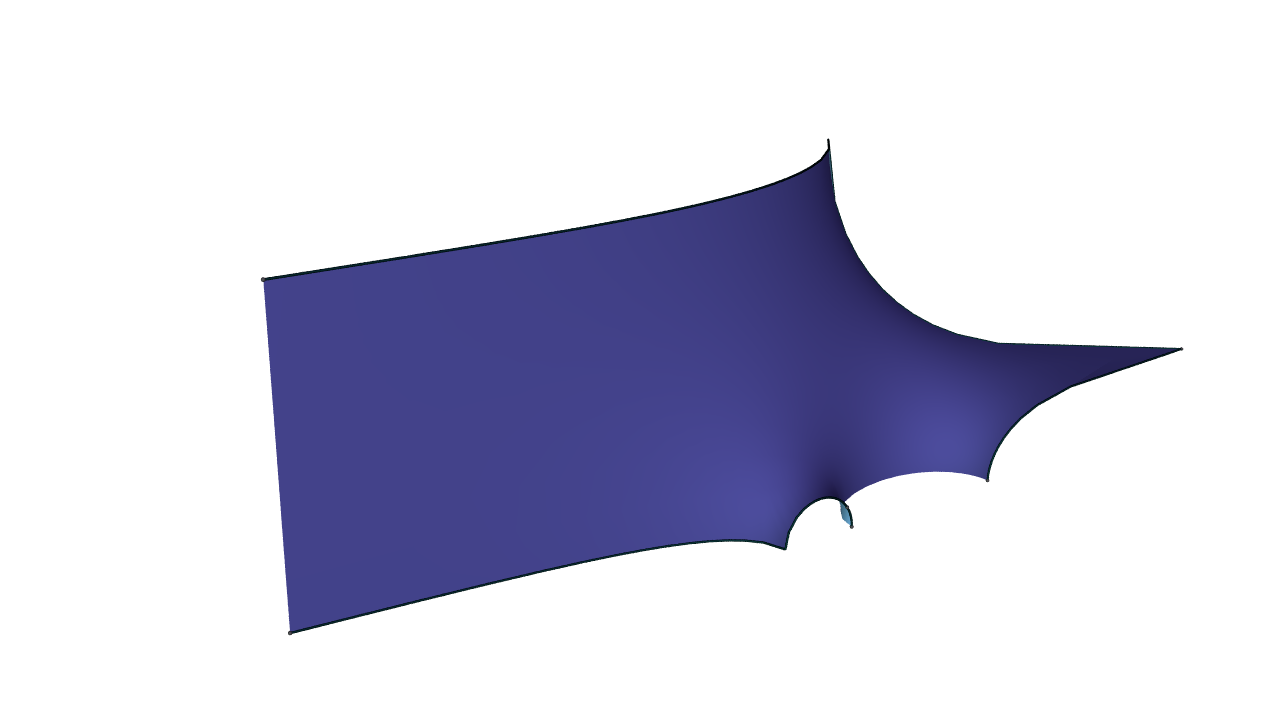}}
		\subfigure[$\alpha=0$]{\includegraphics[width=2.6in]{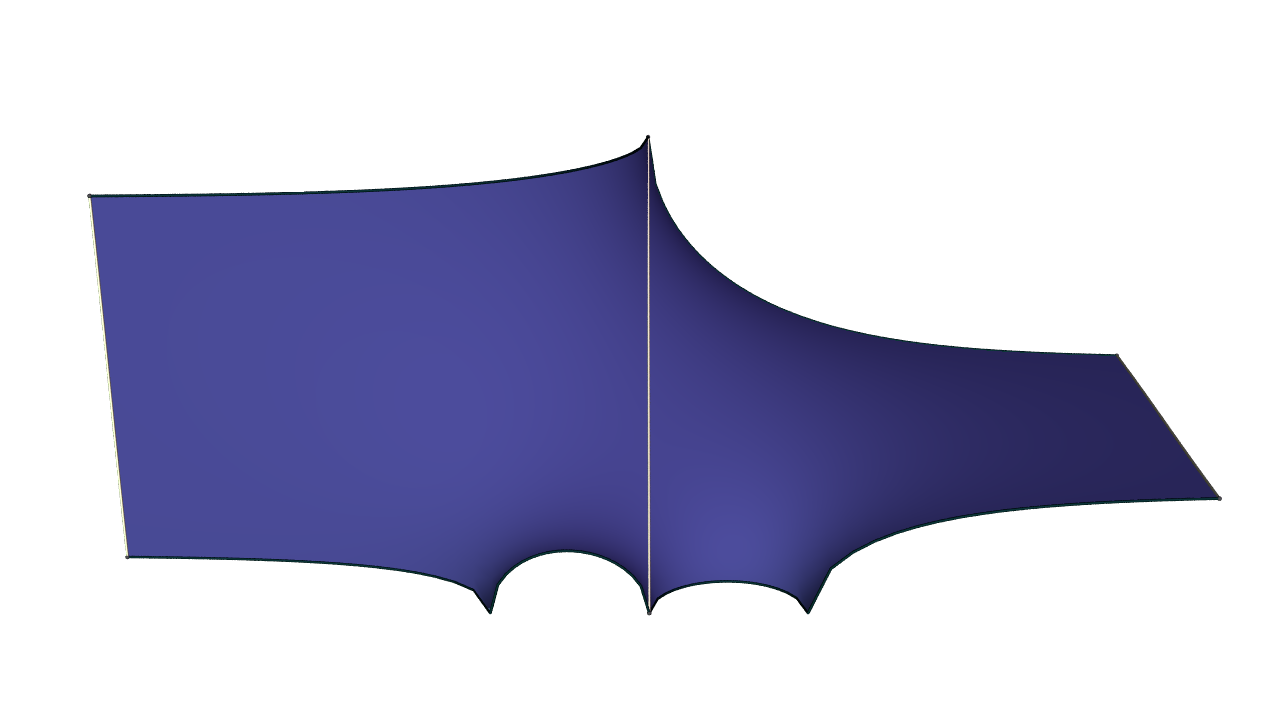}}
	\end{center}
	\caption{Minimal Hexagons in $\R^3$}
	\label{fig:DKShexagon}
\end{figure}
We begin by defining $G$ and $dh$ in the following way:
\begin{align*}
G&=e^{-2\pi i b}\frac{\vartheta(z-(\frac{\tau}{2}-b))}{\vartheta(z-(\frac{\tau}{2}+b))}
	\bigg( \frac{\vartheta(z-(\frac{1}{2}+a))}{\vartheta(z-(\frac{1}{2}-a))}\bigg)^{1-\alpha},
\quad \\
dh&=\frac{\vartheta(z-(\frac{\tau}{2}+b))\vartheta(z-(\frac{\tau}{2}-b))}{\vartheta(z-(\frac{\tau}{2}-ic))\vartheta(z-(\frac{\tau}{2}+ic))} dz .
\end{align*}
Then by using the properties of $\vartheta$-functions we obtain that,
\begin{align*}
	\vartheta(\conj{z}-(\frac{1}{2}+a))&=\vartheta(\conj{z-(\frac{1}{2}+a)})=\conj{\vartheta(z-(\frac{1}{2}+a))},\\
	\vartheta(\conj{z}-(\frac{\tau}{2}-b))&=\vartheta(\conj{z-(-\frac{\tau}{2}-b)})=\conj{\vartheta(z-(-\frac{\tau}{2}-b))}\\
	&=\conj{-e^{-\pi i\tau-2\pi i(z-(\frac{\tau}{2}-b)) }\vartheta(z-(\frac{\tau}{2}-b))} ,\\	
	\vartheta(\conj{z}-(\frac{\tau}{2}+ic))&=\vartheta(\conj{z-(-\frac{\tau}{2}-ic)})=\conj{\vartheta(z-(-\frac{\tau}{2}-ic))}\\
	&=\conj{-e^{-\pi i\tau-2\pi i(z-(\frac{\tau}{2}-ic)) }\vartheta(z-(\frac{\tau}{2}-ic))}.\\	
\end{align*}
By using these transformation formulas, we infer that,
\begin{align*}
	e^{-2\pi i b}\frac{\vartheta(\conj{z}-(\frac{\tau}{2}-b))}{\vartheta(\conj{z}-(\frac{\tau}{2}+b))}&=e^{-2\pi ib}\frac{\conj{-e^{-\pi i\tau-2\pi i(z-(\frac{\tau}{2}-b)) }\vartheta(z-(\frac{\tau}{2}-b))}}{\conj{-e^{-\pi i\tau-2\pi i(z-(\frac{\tau}{2}+b)) }\vartheta(z-(\frac{\tau}{2}+b))}}=\conj{e^{-2\pi ib}\frac{\vartheta(z-(\frac{\tau}{2}-b))}{\vartheta(z-(\frac{\tau}{2}+b))}},
	\end{align*}
	and that,
	\begin{align*}
	&\frac{\vartheta(\conj{z}-(\frac{\tau}{2}+b))\vartheta(\conj{z}-(\frac{\tau}{2}-b))}{\vartheta(\conj{z}-(\frac{\tau}{2}-ic))\vartheta(\conj{z}-(\frac{\tau}{2}+ic))}\\
	&\hspace{0.5in}=	\frac{\conj{e^{-\pi i\tau-2\pi i(z-(\frac{\tau}{2}+b)) }\vartheta(z-(\frac{\tau}{2}+b))} \conj{e^{-\pi i\tau-2\pi i(z-(\frac{\tau}{2}-b)) }\vartheta(z-(\frac{\tau}{2}-b))} }{\conj{e^{-\pi i\tau-2\pi i(z-(\frac{\tau}{2}+ic)) }\vartheta(z-(\frac{\tau}{2}+ic))}\conj{e^{-\pi i\tau-2\pi i(z-(\frac{\tau}{2}-ic)) }\vartheta(z-(\frac{\tau}{2}-ic))}}\\
	&\hspace{0.5in}=	\frac{\conj{\vartheta(z-(\frac{\tau}{2}+b))} \conj{\vartheta(z-(\frac{\tau}{2}-b))} }{\conj{\vartheta(z-(\frac{\tau}{2}+ic))}\conj{\vartheta(z-(\frac{\tau}{2}-ic))}}.
\end{align*}
This implies that $\frac{\vartheta(z-(\frac{1}{2}+a))}{\vartheta(z-(\frac{1}{2}-a))}$, $e^{-2\pi i b}\frac{\vartheta(\conj{z}-(\frac{\tau}{2}-b))}{\vartheta(\conj{z}-(\frac{\tau}{2}+b))}$ and $dh$ assume real values on the real line. That is to say that $\Phi_{1}$ and $\Phi_{2}$ map $\partial\Gamma_{dl}$ and $\partial\Gamma_{dl}$ into straight lines while $dh$ maps the two into real line. Therefore, due to Proposition \ref{prop:symmetries}, we can see that $f(\partial\Gamma_{dl})$ and $f(\partial\Gamma_{dr})$ lie on a vertical symmetry planes. By choosing the branch cut of $1-\alpha$ that makes $\big( \frac{\vartheta(z-(\frac{1}{2}+a))}{\vartheta(z-(\frac{1}{2}-a))}\big)^{1-\alpha}$ real on $[0,\frac{1}{2}-a]$, we obtain that $f(\partial\Gamma_{dl})$ lies on a vertical symmetry plane parallel to the $x$-axis and $f(\partial\Gamma_{dr})$ lies on a vertical symmetry plane that meets the $x$-axis at an angle of $\alpha\pi$.

Similarly, using the properties of $\vartheta$-functions we can additionally show that,
\begin{equation*}
	G(\conj{z}+\tau)=e^{-4\pi i b}(e^{4\pi ia})^{1-\alpha}\conj{G(z)}, \qquad dh(\conj{z}+\tau)=\conj{dh(z)}.
\end{equation*}
This tells us that on $\partial\Gamma_{u}$, $dh$ is real and the argument of $G$ is $-2\pi b+2\pi a(1-\alpha)$ modulo $2\pi (1-\alpha)$. Hence,  $f(\partial\Gamma_{u})$ lies on a vertical symmetry plane. In the following Proposition, we determine the exact angle between the plane containing $f(\partial\Gamma_{u})$ and the plane containing $f(\partial\Gamma_{dr})$.
\begin{proposition}\label{prop:parallelplanes}
	For $b=a(1-\alpha)+\frac{\alpha}{2}$, the two planes that contain images of $f(\partial\Gamma_{u})$ and $f(\partial\Gamma_{dr})$ are parallel.
\end{proposition}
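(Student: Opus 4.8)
The plan is to read off the angle each of the two vertical symmetry planes makes with the $x$-axis and then to impose that the two angles coincide (modulo $\pi$). The angle for $f(\partial\Gamma_{dr})$ is already available from the discussion preceding the Proposition: with the branch of $\big(\frac{\vartheta(z-(\frac12+a))}{\vartheta(z-(\frac12-a))}\big)^{1-\alpha}$ fixed so that this factor is positive on $[0,\frac12-a]$, the function $\arg G$ is constant on $\partial\Gamma_{dr}$ and the plane through $f(\partial\Gamma_{dr})$ makes angle $\alpha\pi$ with the $x$-axis. The remaining ingredient is the constant value of $\arg G$ along $\partial\Gamma_u$: since $dh$ is real there, Proposition \ref{prop:symmetries}(2) identifies this value with the angle the plane through $f(\partial\Gamma_u)$ makes with the $x$-axis.

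To evaluate $\arg G$ on $\partial\Gamma_u$ I would use the quasiperiodicity relation $G(\conj{z}+\tau)=e^{-4\pi i b}(e^{4\pi i a})^{1-\alpha}\,\conj{G(z)}$ derived just above from the transformation laws of $\vartheta$. A point of $\partial\Gamma_u$ has the form $w=x+\frac{\tau}{2}$ with $x\in(0,\frac12)$ real; since $\conj{\tau}=-\tau$ we have $\conj{w}+\tau=x+\frac{\tau}{2}=w$, so taking $z=w$ in the relation gives $G(w)=e^{-4\pi i b}(e^{4\pi i a})^{1-\alpha}\conj{G(w)}$, whence $e^{2i\arg G(w)}=e^{-4\pi i b}(e^{4\pi i a})^{1-\alpha}$. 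Thus $\arg G$ is locally constant, hence constant along the connected arc $\partial\Gamma_u$, and
\[
\arg G\big|_{\partial\Gamma_u}\equiv -2\pi b+2\pi a(1-\alpha)\pmod{\pi},
\]
the remaining $2\pi(1-\alpha)$-ambiguity being the branch of the power, which is pinned once $G$ is continued over $\Gamma$ from the base arc. (For $\alpha=\tfrac1n$ this shift is the rotational part of $D_n$, hence only permutes the symmetry planes of the extended surface.) I would also record that $\arg G|_{\partial\Gamma_u}$ really is constant --- which also follows from the theta-ratio power being the only factor whose argument is not automatically locally constant on $\partial\Gamma_u$.

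The Proposition is then a comparison of the two angles: the plane through $f(\partial\Gamma_u)$ and the plane through $f(\partial\Gamma_{dr})$ are parallel precisely when $-2\pi b+2\pi a(1-\alpha)$ and $\alpha\pi$ describe the same vertical plane, i.e. agree modulo $\pi$ (after the branch has been consistently resolved). Substituting $b=a(1-\alpha)+\frac{\alpha}{2}$ turns the left side into $-\alpha\pi$, giving the desired match, and among admissible $b\in(0,\frac12)$ this is the only value for which it occurs. The step I expect to demand the most care is not this arithmetic but the sign-and-branch bookkeeping around it: making sure the orientation/branch convention under which ``$f(\partial\Gamma_{dr})$'s plane has angle $\alpha\pi$'' was established is the same one under which $\arg G|_{\partial\Gamma_u}=-2\pi b+2\pi a(1-\alpha)$, and that --- using the admissible ranges $a,b\in(0,\frac12)$, $c\in(0,\im(\tfrac{\tau}{2}))$, $\tau\in i\R^+$ --- the $2\pi(1-\alpha)$-indeterminacy is resolved so that the matching condition is exactly $b=a(1-\alpha)+\tfrac{\alpha}{2}$ and not that value shifted by a nonzero lattice element.
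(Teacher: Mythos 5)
Your approach is close in spirit to the paper's: both come down to comparing $\arg G$ on $\partial\Gamma_u$ with $\arg G$ on $\partial\Gamma_{dr}$ modulo $\pi$, and your use of the quasiperiodicity relation $G(\conj{z}+\tau)=e^{-4\pi i b}(e^{4\pi i a})^{1-\alpha}\conj{G(z)}$ evaluated at a fixed point $w\in\partial\Gamma_u$ is an appealing shortcut past the paper's computation of the continuous change of $\arg G$ along $\partial\Gamma_r$. But there are two real gaps.

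First, the branch resolution is not a cosmetic issue you can defer. From $e^{2i\arg G(w)}=e^{-4\pi i b}(e^{4\pi i a})^{1-\alpha}$ you obtain $\arg G(w)=-2\pi b+2\pi a(1-\alpha)+\pi k(1-\alpha)+\pi m$ for some integers $k,m$, where $k$ is the branch index of the power. Modulo $\pi$ this leaves a residual shift $-\pi k\alpha$, which is \emph{not} a multiple of $\pi$ for generic $\alpha$. So the statement ``$\arg G\big|_{\partial\Gamma_u}\equiv-2\pi b+2\pi a(1-\alpha)\pmod\pi$'' is not yet justified; the $2\pi(1-\alpha)$-ambiguity you name does alter the answer mod $\pi$ and must actually be killed, not merely acknowledged. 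The paper does exactly this work in its identity \eqref{identity}: it first establishes that identity modulo $2\pi(1-\alpha)$ from the $\vartheta$-transformation laws, and then pins the exact representative by continuity in $a$, checking the endpoint $a=\tfrac12$ where both sides are visibly $0$. Some version of that step belongs in your argument.

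Second, the final comparison contains a sign error that only by accident does not derail the conclusion. You assert that $-\alpha\pi$ (your computed value on $\partial\Gamma_u$) and $\alpha\pi$ (which you take from the paper's prose as the angle of the plane through $f(\partial\Gamma_{dr})$) ``agree modulo $\pi$''; they do not, since $-2\alpha\pi\notin\pi\Z$ in general. The paper's statement that $f(\partial\Gamma_{dr})$ lies on a plane meeting the $x$-axis ``at an angle of $\alpha\pi$'' is a statement about the unsigned angle. Tracking the branch of $\big(\tfrac{\vartheta(z-(\frac12+a))}{\vartheta(z-(\frac12-a))}\big)^{1-\alpha}$ across the simple zero at $\tfrac12-a$ (passing above it, since $\Gamma$ sits in the upper half of the rectangle) shows that $\arg G$ jumps by $+\pi(1-\alpha)$ from $\partial\Gamma_{dl}$ to $\partial\Gamma_{dr}$, so the correct plane angle mod $\pi$ is $\pi(1-\alpha)\equiv-\alpha\pi$, not $+\alpha\pi$. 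The match you want is $-\alpha\pi$ with $-\alpha\pi$, which does hold; but as written, your comparison of $-\alpha\pi$ with $+\alpha\pi$ is false, and the conclusion is reached by two compensating sign slips rather than by a correct chain of equalities. The paper sidesteps both issues at once by computing the difference $\arg G(\tfrac\tau2+\tfrac12)-\arg G(\tfrac12)=-2\pi b+2\pi a(1-\alpha)-\pi(1-\alpha)=-\pi$ along the path $\partial\Gamma_r$, which is unambiguous and already the change between the two arcs.
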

\begin{proof}
	Note that for any complex number $x$, we have 
\begin{equation}\label{thetatrans}
\begin{split}
\vartheta(\frac{\tau}{2}+x)&=\vartheta(\tau-\frac{\tau}{2}+x) =-e^{-2\pi i x}\vartheta(-\frac{\tau}{2}+x)=e^{-2\pi i x}\vartheta(\frac{\tau}{2}-x),\\
 \vartheta(\frac{1}{2}-x)&=\vartheta(1-\frac{1}{2}-x) =-\vartheta(-\frac{1}{2}-x) =\vartheta(\frac{1}{2}+x),\\
 \vartheta(\frac{1}{2}-(\frac{\tau}{2}-x))&=e^{2\pi i x}\vartheta(\frac{1}{2}-(\frac{\tau}{2}-\tau-x))=e^{2\pi i x}\vartheta(\frac{1}{2}-(-\frac{\tau}{2}-x))\\ &
 =e^{2\pi  x}\vartheta(\frac{1}{2}-(\frac{\tau}{2}+x)).
\end{split}
\end{equation}

First, we show that the following holds true:
\begin{align}\label{identity}
	\arg\Bigg(\bigg(\frac{\vartheta(\frac{\tau}{2}-a)}{\vartheta(\frac{\tau}{2}+a)}\bigg)^{1-\alpha}\Bigg/ \bigg(\frac{\vartheta(-a)}{\vartheta(a)}\bigg)^{1-\alpha}\Bigg)=2\pi a(1-\alpha)- \pi(1-\alpha).
\end{align}
	Equation \eqref{identity} being modulo $2\pi(1-\alpha)$ is an immediate consequence of \eqref{thetatrans}. Observe that both sides of the equation are continuous in $a$ for $a>0$. In particular, when $a=1/2$, the expression appearing inside the argument on the left-hand side of \eqref{identity} is equal to $1$;  and thus by continuity the right-hand of \eqref{identity} is equal to $0$. Hence we have proven \eqref{identity}.
	
	 Next, consider the change in argument of the Gauss map $G$:
\begin{align*}
	&\arg(G(\frac{\tau}{2}+\frac{1}{2}))-\arg(G(\frac{1}{2}))\\
	&\hspace{0.5in}=\arg\Bigg(\frac{\vartheta(\frac{1}{2}+b)}{\vartheta(\frac{1}{2}-b)}\bigg(\frac{\vartheta(\frac{\tau}{2}-a)}{\vartheta(\frac{\tau}{2}+a)}\bigg)^{1-\alpha}\Bigg/ \frac{\vartheta(\frac{1}{2}-(\frac{\tau}{2}-b))}{\vartheta(\frac{1}{2}-(\frac{\tau}{2}+b))}\bigg(\frac{\vartheta(-a)}{\vartheta(a)}\bigg)^{1-\alpha}\Bigg)\\
	&\hspace{0.5in}=\arg\Bigg(\frac{\vartheta(\frac{1}{2}+b)}{\vartheta(\frac{1}{2}-b)}\Bigg/ \frac{\vartheta(\frac{1}{2}-(\frac{\tau}{2}-b))}{\vartheta(\frac{1}{2}-(\frac{\tau}{2}+b))}\Bigg) \\
	&\hspace{0.75in}+\arg\Bigg( \bigg(\frac{\vartheta(\frac{\tau}{2}-a)}{\vartheta(\frac{\tau}{2}+a)}\bigg)^{1-\alpha}\Bigg/\bigg(\frac{\vartheta(-a)}{\vartheta(a)}\bigg)^{1-\alpha}\Bigg)\\
	&\hspace{0.5in}=-2\pi b+2\pi a(1-\alpha)- \pi(1-\alpha).
\end{align*}
Note that for $b=a(1-\alpha)+\frac{\alpha}{2}$, we have $-2\pi b+2\pi a(1-\alpha)- \pi(1-\alpha)=-\pi$. Since we know that $\frac{\tau}{2}+\frac{1}{2}\in\partial\Gamma_{u}$, $\frac{1}{2}\in\partial\Gamma_{dr}$ and that $f(\partial\Gamma_{u})$ and $f(\partial\Gamma_{dr})$ lie on vertical symmetry planes, Proposition \ref{prop:parallelplanes} follows.
\end{proof}

Hereafter, we will fix $b=a(1-\alpha)+\frac{\alpha}{2}$.

We will now show that the vertical sides of $\partial\Gamma$ are mapped to the edges in horizontal symmetry planes. Observe that at $0\in\partial\Gamma_{ld}$, the Gauss map $G$ attains the value $1$, that is, we do not need to normalize $G$.  Now, let $\sigma$ be the reflection on the imaginary axis bounding $\Gamma$. Then, we can extend the Weiestrass data defined on $\Gamma$ over the imaginary line i.e. extend the surface and obtain:
\begin{align*}
	\sigma^{*}G &=e^{-2\pi b}\frac{\vartheta(-\conj{z}-(\frac{\tau}{2}-b))}{\vartheta(-\conj{z}-(\frac{\tau}{2}+b))}
	\bigg( \frac{\vartheta(-\conj{z}-(\frac{1}{2}+a))}{\vartheta(-\conj{z}-(\frac{1}{2}-a))}\bigg)^{1-\alpha}\\
	&=e^{-2\pi b}\frac{\conj{\vartheta(z-(\frac{\tau}{2}+b))}}{\conj{\vartheta(z-(\frac{\tau}{2}-b))}}
	\bigg( \frac{\conj{\vartheta(z-(\frac{1}{2}-a))}}{\conj{\vartheta(z-(\frac{1}{2}+a))}}\bigg)^{1-\alpha}\\ &=\conj{\frac{1}{G}},
\end{align*}
and
\begin{align*}
	\sigma^{*}dh&=\frac{\vartheta(-\conj{z}-(\frac{\tau}{2}+b))\vartheta(-\conj{z}-(\frac{\tau}{2}-b))}{\vartheta(-\conj{z}-(\frac{\tau}{2}-ic))\vartheta(-\conj{z}-(\frac{\tau}{2}+ic))}\sigma^*dz\\
	&=-\frac{\conj{\vartheta(z-(\frac{\tau}{2}+b))\vartheta(z-(\frac{\tau}{2}-b))}}{\conj{\vartheta(z-(\frac{\tau}{2}-ic))\vartheta(z-(\frac{\tau}{2}+ic))}}\conj{dz}\\
	&=-\conj{dh}.
\end{align*}
Hence, same argument that we made for $DCCW_n$ shows that, by Proposition \ref{prop:symmetries}, $\partial\Gamma_{ld}$ and $\partial\Gamma_{lu}$ are mapped into horizontal symmetry planes. Almost identical calculations show that $\partial\Gamma_{r}$ is in a horizontal symmetry plane as well.

Combining all the information we deduce about the image of $\Gamma$ under the Weierstrass map $f$, we deduce that $f(\Gamma)$ is indeed a minimal hexagon (see Figure \ref{fig:DKShexagon}).
\subsection{Period Problem of $DKS_n$}

In order to close the periods, we need to make sure that the two horizontal planes planes containing the images of $\partial\Gamma_{lu}$ and $\partial\Gamma_{r}$ are of the same height and that the images of $\partial\Gamma_{dr}$ and $\partial\Gamma_{u}$ are on the same plane. The former condition can be achieved by closing the vertical period condition along $\partial\Gamma_{u}$, whereas the latter can be achieved by closing the horizontal period condition along  $\partial\Gamma_{r}$. Note that by horizontal and vertical we refer to the their trace in $\R^3$ rather than their position on the torus. That is we need to show that the following is true:
\begin{equation}\label{eq=DKSperiod}
	\int_{\partial\Gamma_{u}}dh=0 \qquad 	\int_{\partial\Gamma_{r}}Gdh=\conj{\int_{\partial\Gamma_{r}}\frac{1}{G}dh} \qquad 
\end{equation}
For a given dihedral angle $\alpha$, we thus need to solve for $a,c,\tau$ such that the following function is equal to $0$:
\begin{equation}\label{eq=DKSperiodf}
P(a,c,\tau,\alpha):=\Bigg\{\int_{\partial\Gamma_{u}}dh,\int_{\partial\Gamma_{r}}Gdh-\conj{\int_{\partial\Gamma_{r}}\frac{1}{G}dh}\Bigg\}
\end{equation}

An important point to note here is that for $\alpha=0$, $\tau=i$ there exists $a_0$, such that $a=a_0=c$ solves the period condition given in \eqref{eq=DKSperiod}. In fact, in this particular case the Weierstrass data given above parametrizes the genus 1 doubly periodic Scherk Surface, the existence of which was given by  Hermann Karcher in \cite{howe3} using a different parametrization. In this paper, we apply the Implicit Function Theorem and extend this solution to the $DKS_n$ surfaces given in Theorem \ref{DKStheorem}. Since the construction of the period problem is done by using $\vartheta$-functions on varying tori, it can be difficult to apply the Implicit Function Theorem. In fact, in order to show that the Jacobian has non-zero derivative, one cannot use the residue theorem, unlike in the previous examples $DE_{3,n}$, $DH_{1,1,n}$ and $DCCW_n$. Instead, one has to work with the derivatives of the $\vartheta$-functions. However, it is sufficient to calculate the Jacobian for the fix values $\alpha=0$ and $\tau=i$. Then if the Jacobian is nonzero at $a=a_0=c$, we will obtain a two parameter family of solutions to $P$ that varies in $\alpha,\tau$ around $\alpha=0$, $\tau=i$ and $a=a_0=c$. Assuming $\alpha=0$ and $\tau=i$ simplifies the calculations greatly because we can, in fact, rotate the surface in $\R^3$ and reparametrize a simply connected piece of the surface when $\alpha=0$. This can be understood as reparametrizing $\Gamma$ on the upper half plane.
\subsection{Reparametrization of the Limit}
Note that, when $\alpha=0$, $G$ and $dh$ are well-defined on $\C/\Lambda_{\tau}$. We can rotate the limiting hexagon, which appears in Lemma \ref{lem:DKKShexagon}, in $\R^3$ so that all its symmetry planes become vertical symmetry planes. This can be done only when $\alpha=0$, because when $\alpha$ is nonzero, the vertical symmetry planes of the minimal hexagon are not parallel.  Rotating the minimal hexagon changes its Weierstrass data, however, it also allows us to reparametrize the new Weierstrass data on $\Gamma$ by using Schwarz-Christoffel maps on the upper half plane. Let $\tilde{G}$ and $\tilde{dh}$ be the new Weierstrass data of the limiting hexagon, obtained after the abovementioned rotation, whose divisors can be described as follows: Since $G$ is an elliptic function of order 2 on a torus, it assumes every value exactly twice. This implies that $\tilde{G}$ assumes every value on the torus exactly twice as well. In particular, $\tilde{G}$ is vertical at the corner points of $\Gamma$. Since $\tilde G$ can not assume $0$ or $\infty$ more than twice, there is no other point where $\tilde{G}$ is vertical. Consequently, $\tilde{dh}$ has simple $0$'s only at the corners points of $\Gamma$ and at the end points $\frac{\tau}{2}-ic$, $\frac{1}{2}-a$. Now consider the the following map, defining a change of coordinates:
\begin{equation}
	T(z)=\int _{0}^{z}\frac{dw}{\sqrt{w (1-w^2)}}\Bigg/2\int _{0}^{1}\frac{dw}{\sqrt{w (1-w^2)}}.
\end{equation} 
Note that $T$ is a normalized Schwarz-Christoffel map that maps the upper half plane to $\Gamma$ for $\tau=i$. In particular, it maps the points $-1,0,1,\infty$  to $\frac{i}{2},0,\frac{1}{2},\frac{i+1}{2}$, respectively, which are the corners points of $\Gamma$. Thus $\tilde{G}{dh}$ and $\frac{1}{\tilde{G}}\tilde{dh}$ can now be reparametrized on $\mathbb{H}$ by the following Schwarz-Christoffel integrands:
\begin{equation*}
	\tilde{G}\tilde{dh}=\rho\frac{\sqrt{1-z^2}}{\sqrt{z}(z - \tilde{a}) (z + \tilde{c})}, \quad \frac{1}{\tilde{G}}\tilde{dh}=\frac{\sqrt{z}}{\rho\sqrt{1-z^2}(z - \tilde{a}) (z + \tilde{c})},
\end{equation*}
where $\tilde{a}=T^{-1}(\frac{1}{2}-a)$, $\tilde{c}=-T^{-1}(\frac{\tau}{2}-ic)$ and $\rho=\frac{\sqrt{\tilde{a}}}{\sqrt{1-\tilde{a}^2}}$.\\
 Observe that we have $0<\tilde{a},\tilde{c}<1$ using the definition of $T$. 
This further implies that,
\begin{equation*}
	\tilde{G}=\rho\frac{\sqrt{1-z^2}}{\sqrt{z}}, \quad \tilde{dh}=\frac{1}{(z-\tilde{a}) (z+\tilde{c})}.
\end{equation*}
One can show that by pushing forward $\tilde{G}$ and $\tilde{dh}$ to $\Gamma$, by the map $T$, and extending it to $\C/\Lambda_{\tau}$, we obtain the divisors described above. Moreover, since the Gauss map is vertical at $\frac{1}{2}-a$ before rotation, after the rotation it becomes horizontal. Thanks to our choice of $\rho$, we see that $\tilde{G}$ assumes the value $1$ at $\tilde{a}$. Thus we confirm that $\tilde{G}$ and $\tilde{dh}$ are scaled correctly and that they are indeed the reparametrized Weierstrass data on the upper half plane. As a result, we can rewrite the period problem given by equation \eqref{eq=DKSperiodf} as:
\begin{equation}\label{tildeP}
	P(a,c,i,0)=\tilde{P}(\tilde{a},\tilde{c})=\Bigg\{\int_{\infty}^{-1}\tilde{G}\tilde{dh}-\conj{\int_{\infty}^{-1}\frac{1}{\tilde{G}}\tilde{dh}},\int_{1}^{\infty}\tilde{G}\tilde{dh}-\conj{\int_{1}^{\infty}\frac{1}{\tilde{G}}\tilde{dh}}\Bigg\}
\end{equation}
\subsection{Solution to the Period Problem}
Clearly, the integrals appearing in the first coordinate of the definition of $\tilde P$ in \eqref{tildeP}, are real, whereas the integrals in the second coordinate are imaginary. However, we can write both in terms of real valued integrals in the following manner:
\begin{align*}
\int_{\infty}^{-1}\tilde{G}\tilde{dh}-\conj{\int_{\infty}^{-1}\frac{1}{\tilde{G}}\tilde{dh}}&=\int_{\infty}^{-1}\rho\frac{\sqrt{1-z^2}dz}{\sqrt{z}(z - \tilde{a}) (z + \tilde{c})}-\frac{\sqrt{z}dz}{\rho\sqrt{1-z^2}(z - \tilde{a}) (z + \tilde{c})}\\
&=\int_{1}^{\infty}\rho\frac{\sqrt{t^2-1}dt}{\sqrt{t}(t+ \tilde{a}) (t - \tilde{c})}-\frac{\sqrt{t}dt}{\rho\sqrt{t^2-1}(t + \tilde{a}) (t - \tilde{c})}
\shortintertext{and,}
\int_{1}^{\infty}\tilde{G}\tilde{dh}-\conj{\int_{1}^{\infty}\frac{1}{\tilde{G}}\tilde{dh}}&=\int_{1}^{\infty}\rho\frac{\sqrt{1-z^2}dz}{\sqrt{z}(z - \tilde{a}) (z + \tilde{c})}+\frac{\sqrt{z}dz}{\rho\sqrt{1-z^2}(z - \tilde{a}) (z + \tilde{c})}\\
&=i\int_{1}^{\infty}\rho\frac{\sqrt{t^2-1}dt}{\sqrt{t}(t - \tilde{a}) (t + \tilde{c})}-\frac{\sqrt{t}dt}{\rho\sqrt{t^2-1}(t - \tilde{a}) (t + \tilde{c})}.
\end{align*}
For simplicity, we define the following functions:
\begin{equation*}
	\psi_1(x,y):=\int_{1}^{\infty}\frac{\sqrt{t^2-1}dt}{\sqrt{t}(t+x)(t-y)}, \qquad \psi_2(x,y):=\int_{1}^{\infty}\frac{\sqrt{t}dt}{\sqrt{t^2-1}(t+x)(t-y)}.
\end{equation*}
 Now using these new functions we rewrite $\tilde P$ defined in \eqref{tildeP} as,
\begin{align*}
	\tilde{P}(x,y)=\bigg\{\rho(x)\psi_1(x,y)-\frac{\psi_2(x,y)}{\rho(x)},i\rho(x)\psi_1(-x,-y)-i\frac{\psi_2(-x,-y)}{\rho(x)}\bigg\},
\end{align*}
and we calculate the Jacobian of $\tilde P$ to obtain,
\begin{align*}
	-i\det(D(\tilde{P}(x,y)))&=\partial_{x}\bigg(\rho(x)\psi_1(x,y)-\frac{\psi_2(x,y)}{\rho(x)}\bigg)\partial_{y}\bigg(\rho(x)\psi_1(-x,-y)-\frac{\psi_2(-x,-y)}{\rho(x)}\bigg)\\
	&-\partial_{x}\bigg(\rho(x)\psi_1(-x,-y)-\frac{\psi_2(-x,-y)}{\rho(x)}\bigg)\partial_{y}\bigg(\rho(x)\psi_1(x,y)-\frac{\psi_2(x,y)}{\rho(x)}\bigg).\\
	\end{align*}
Note that $	\psi_i(x,x)=\psi_i(-x,-x)$ and $	\partial_{1}\psi_i(x,x)=-\partial_{2}\psi_i(-x,-x)$. Evaluating the Jacobian at $x,y=\tilde{a}$ gives us:
\begin{equation*}
	\det(D(\tilde{P}(\tilde{a},\tilde{a})))=-\frac{i}{(\rho(\tilde{a})^3)}f_1(\tilde{a})f_2(\tilde{a}).
\end{equation*}
Here $f_i$'s are defined as,

\begin{equation*}\label{eq:firstfactor}
	f_1(\tilde{a}):=	(\rho(\tilde{a}))^2\big(\partial_2\psi_1(\tilde{a},\tilde{a})-\partial_1\psi_1(\tilde{a},\tilde{a})\big)-\partial_2\psi_2(\tilde{a},\tilde{a})+\partial_1\psi_2(\tilde{a},\tilde{a}),
\end{equation*}
\begin{align*}\label{eq:secondfactor}
	f_2(\tilde{a})&:=(\rho(\tilde{a}))^2\rho'(\tilde{a})\psi_1(\tilde{a},\tilde{a})+\rho'(\tilde{a})\psi_2(\tilde{a},\tilde{a})+(\rho(\tilde{a}))^3\big(\partial_{2}\psi_1(\tilde{a},\tilde{a})+\partial_{1}\psi_1(\tilde{a},\tilde{a})\big)\\
	&-\rho(\tilde{a})\big(\partial_{2}\psi_2(\tilde{a},\tilde{a})+\partial_{1}\psi_2(\tilde{a},\tilde{a})\big).
\end{align*}
Furthermore, using the definitions of $\psi_2$, $\psi_2$ and $\rho$, we simplify $f_i$'s as,
\begin{equation*}
	f_1(\tilde{a})=\frac{2}{\tilde{a}^2-1}\int_{1}^{\infty}\frac{ \sqrt{t} (1-t\tilde{a})}{\sqrt{t^2-1}(t-\tilde{a})^2 (t+\tilde{a})}dt,
\end{equation*}
\begin{equation*}
	f_2(\tilde{a})=\int_{1}^{\infty}\frac{t^2 \left(\tilde{a}^3+\tilde{a}\right)+t \left(-3 \tilde{a}^4-2 \tilde{a}^2+1\right)+\tilde{a} \left(5 \tilde{a}^2-3\right)}{2 \sqrt{t} \sqrt{t^2-1} \sqrt{\tilde{a}} \left(1-\tilde{a}^2\right)^{5/2} (t-\tilde{a})^2}dt.
\end{equation*}
Observe that, the determinant of the Jacobian of $\tilde{P}(\tilde{a},\tilde{c})$ at $\tilde{a}=\tilde{c}$ is nonzero if and only if $f_1(\tilde{a})$ and $f_2(\tilde{a})$ are nonzero. Numerical results show that, $f_1(\tilde{a})$ is strictly positive and that $f_2(\tilde{a})$ is strictly negative for $\tilde{a}\in(0,1)$. Therefore the determinant of the Jacobian is nonzero for all values of $\tilde{a}\in(0,1)$. In particular, the determinant is nonzero for $\tilde{a}=a_0$, $a_0$ being the solution to the period problem of the genus one doubly periodic Scherk Surface.
\begin{figure}[H]
	\begin{center}
		\includegraphics[width=11cm]{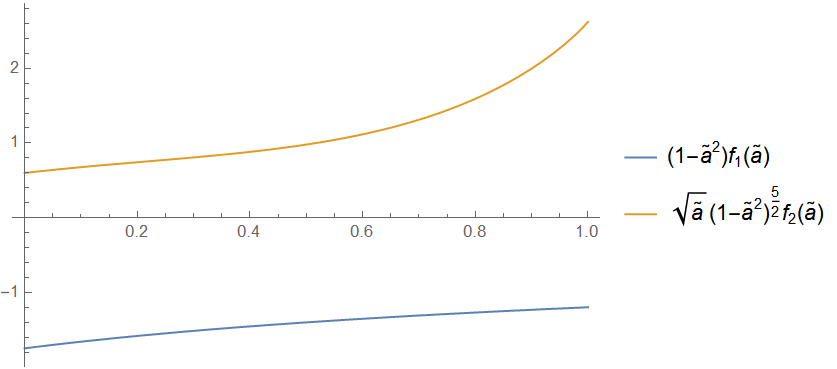}
	\end{center}
\end{figure}
\begin{lemma}
		For every small enough $\alpha\geq 0$ and for $\tau$ in an imaginary interval around around $i$, there exists a minimal wedge $W_{2\pi\alpha}$ corresponding to the $DKS_{n}$ surface given in Theorem \ref{DKStheorem}.
\end{lemma}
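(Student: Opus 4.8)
The plan is to prove the lemma by the Implicit Function Theorem, applied to the period map $P(a,c,\tau,\alpha)$ of \eqref{eq=DKSperiodf} and based at the solution corresponding to the genus one doubly periodic Scherk surface. As observed above, at $\alpha=0$, $\tau=i$ there is a value $a_0\in(0,\tfrac12)$ with $P(a_0,a_0,i,0)=0$; since $a_0<\tfrac12=\im(\tfrac{i}{2})$, this base point lies strictly inside the admissible region $\{a\in(0,\tfrac12),\ c\in(0,\im(\tfrac\tau2)),\ \tau\in i\R^+\}$ of Lemma \ref{lem:DKKShexagon} (recall that $b=a(1-\alpha)+\tfrac\alpha2$ has already been fixed through Proposition \ref{prop:parallelplanes}), so any nearby zero of $P$ still yields an honest minimal hexagon.

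First I would record the regularity of $P$. Away from the points $\tfrac\tau2\pm b$, $\tfrac\tau2\pm ic$, $\tfrac12\pm a$, the Weierstrass data $G$ and $dh$ depend holomorphically on $(a,c,\tau)$ and, via the exponent $1-\alpha$ and the branch cut fixed after Proposition \ref{prop:parallelplanes}, differentiably on $\alpha$ near $\alpha=0$. The integration cycles $\partial\Gamma_u$ and $\partial\Gamma_r$ stay a definite distance away from those points, so $\int_{\partial\Gamma_u}dh$ and $\int_{\partial\Gamma_r}Gdh-\conj{\int_{\partial\Gamma_r}\frac{1}{G}dh}$, and hence $P$, are $C^1$ in a neighbourhood of $(a_0,a_0,i,0)$.

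The crux is to verify that the partial Jacobian $D_{(a,c)}P(a_0,a_0,i,0)$ is invertible. For this I would invoke the reparametrization identity $P(a,c,i,0)=\tilde P(\tilde a,\tilde c)$ of \eqref{tildeP}, with $\tilde a=T^{-1}(\tfrac12-a)$ and $\tilde c=-T^{-1}(\tfrac\tau2-ic)$. The Schwarz--Christoffel map $T$ has derivative proportional to $1/\sqrt{w(1-w^2)}$, which is finite and nonzero on $(0,1)$, so $T$ is a local biholomorphism along the arcs in question and $(a,c)\mapsto(\tilde a,\tilde c)$ is a local diffeomorphism near $(a_0,a_0)$ with nonvanishing Jacobian; by the chain rule it therefore suffices that $\det D\tilde P\neq0$ at the image point. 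That image point corresponds, under this change of variables, to a diagonal point $\tilde a=\tilde c\in(0,1)$, where we computed $\det D\tilde P(\tilde a,\tilde a)=-\tfrac{i}{\rho(\tilde a)^3}f_1(\tilde a)f_2(\tilde a)$; since the explicit integral formulas give $f_1(\tilde a)>0$ and $f_2(\tilde a)<0$ for every $\tilde a\in(0,1)$, this determinant is nonzero, and hence so is $\det D_{(a,c)}P(a_0,a_0,i,0)$.

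With these ingredients the Implicit Function Theorem provides a neighbourhood of $(\tau,\alpha)=(i,0)$ in $i\R^+\times[0,\infty)$ and $C^1$ functions $(\tau,\alpha)\mapsto a(\tau,\alpha)$, $c(\tau,\alpha)$ with $a(i,0)=c(i,0)=a_0$ and $P(a(\tau,\alpha),c(\tau,\alpha),\tau,\alpha)=0$ throughout. For each such solution the two period conditions in \eqref{eq=DKSperiod} hold, so by Lemma \ref{lem:DKKShexagon} the minimal hexagon $f(\Gamma)$ reflects across its boundary edges into a closed minimal wedge $W_{2\pi\alpha}$, which is the wedge corresponding to $DKS_n$ (for $\alpha=\tfrac1n$ it extends by further reflections to the surface of Theorem \ref{DKStheorem}). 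The main obstacle, as I see it, is to turn the sign statements $f_1>0$, $f_2<0$ on $(0,1)$ into a rigorous estimate rather than a numerical one; a lesser subtlety is the differentiability of $P$ in $\alpha$ at $\alpha=0$, where the quarter torus degenerates and an extra end at $\infty$ emerges, but the period cycles avoid the degenerating data so this is harmless.
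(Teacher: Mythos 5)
Your proof takes essentially the same route as the paper: reparametrize $P(a,c,i,0)$ via the Schwarz--Christoffel map $T$ to reduce to the explicitly computed Jacobian of $\tilde P$, check that the change of variables $(a,c)\mapsto(\tilde a,\tilde c)$ is a local diffeomorphism, conclude $\det D_{(a,c)}P(a_0,a_0,i,0)\neq 0$ from $\det D\tilde P(\tilde a,\tilde a)=-\tfrac{i}{\rho(\tilde a)^3}f_1(\tilde a)f_2(\tilde a)$ with $f_1>0$, $f_2<0$, and apply the Implicit Function Theorem. Your caveat that the positivity of $f_1$ and negativity of $f_2$ on $(0,1)$ rest only on numerics is well placed, since the paper itself offers only numerical evidence at that step.
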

\begin{proof}
	Observe that,
	 $$P(a,c,i,0)=\tilde{P}(T^{-1}(\frac{1}{2}-a),-T^{-1}(\frac{i}{2}-ic)).$$
	
	  We have showed that Jacobian of $\tilde{P}(\tilde{a},\tilde{c})$ is nonzero when it is evaluated at $\tilde{a}=\tilde{c}$. Observe also that the Jacobian of the map $(x,y)\mapsto (T^{-1}(\frac{1}{2}-x),-T^{-1}(\frac{i}{2}-iy))$ is nonzero for positive $x,y$. Leveraging these findings, along with the fact that the period problem at limit can be solved for values $\alpha=0$, $\tau=i$, $a=a_0=c$, and employing the Implicit Function Theorem, we conclude that for a given pair $\alpha,\tau$ in a neighborhood around $\alpha=0,\tau=i$, there exist $a \in (0,\frac{1}{2})$ and $c\in (0,\frac{\im(\tau)}{2})$ that satisfy the period conditions given in \eqref{eq=DKSperiod}.
\end{proof}
\begin{figure}[H]
	\begin{center}
		\subfigure[$\alpha>0$]{\includegraphics[width=3in]{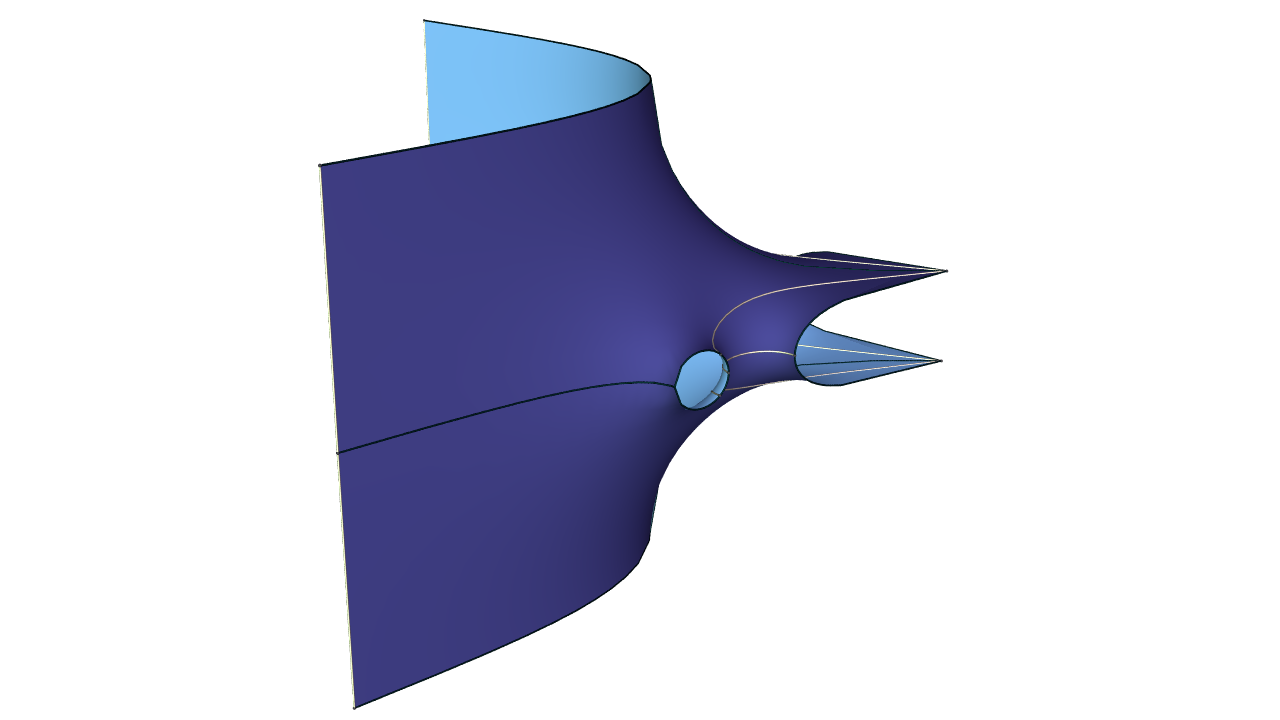}}
		\subfigure[$\alpha=0$]{\includegraphics[width=2.6in]{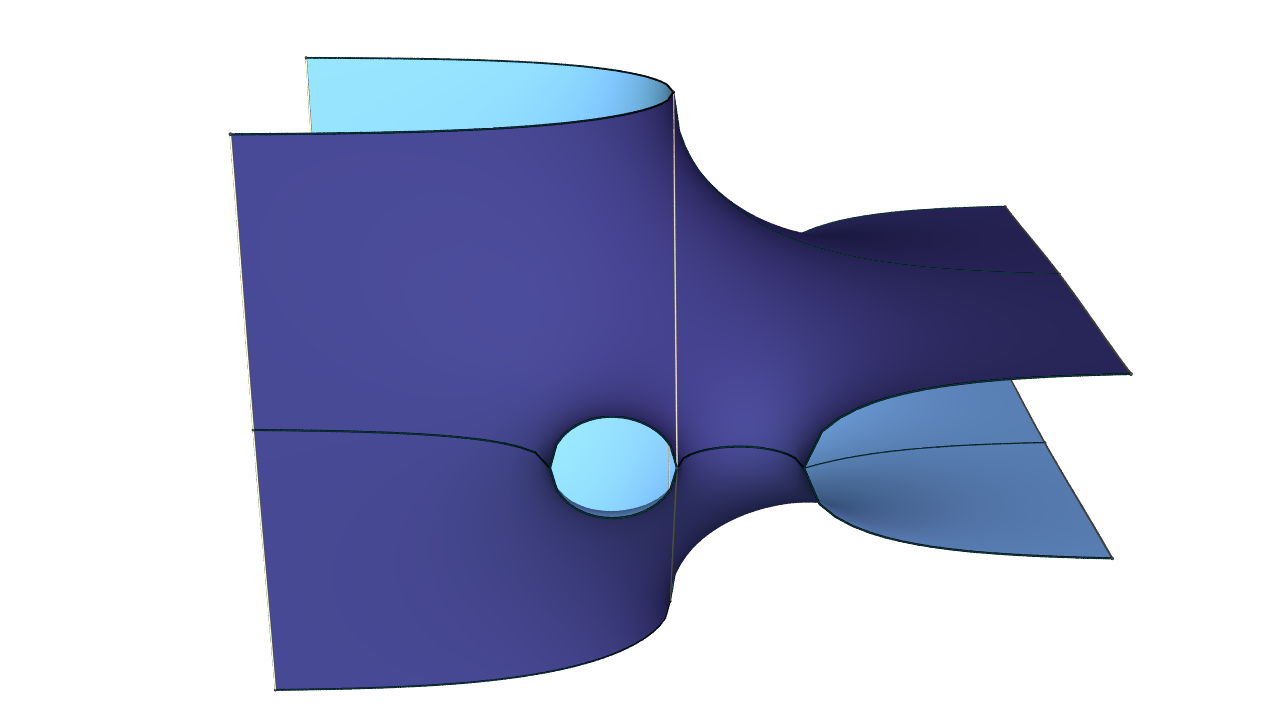}}
	\end{center}
	\caption{$W_{2\pi\alpha}$ corresponding to $DKS_{n}$}
\end{figure}
Thus we conclude the proof of Theorem \ref{DKStheorem}.
\end{proof}
\begin{remark}
	Here we point out the differences in the approaches taken in finding $DCCW_{n}$ and $DKS_{n}$: The sequence $DCCW_{n}$ is first constructed as a variation of Wolgemuth's surface and then its dihedral limit is investigated. On the contrary, the $DKS_{n}$'s are found as a sequence converging to the doubly periodic Karcher-Scherk surface of genus $1$ (see \cite{howe3}), that is, in this scenario the limit candidate is fixed which then gives rise to a sequence of minimal surfaces. Thanks to the two way implementations of the dihedralization method, highlighted in the examples above, we build a bridge between finite type minimal surfaces and singly periodic minimal surfaces, and similarly between singly and doubly periodic minimal surfaces.
\end{remark}

\section{Acknowledgments}
The author would like to thank Professor Matthias Weber for his invaluable help and insights throughout the project.
\bibliography{minlit}
\bibliographystyle{alpha}
\end{document}